\def\refeq#1{\if\workingver y(\ref{#1})-[[#1]]\else(\ref{#1})\fi}
\def\refth#1{\if\workingver y\ref{#1}-[[#1]]\else\ref{#1}\fi}
\def\mylabel#1{\if\workingver y\label{#1}{\bf\ \ [[#1]]\ \ }\else\label{#1}\fi}
\def\mybibitem#1{\if\workingver y\bibitem{#1}{\bf\ \ [[#1]]\ \
}\else\bibitem{#1}\fi}
\def\articletheorems{
\newtheorem{thm}{Theorem}[section]
\newtheorem{lem}[thm]{Lemma}

\newtheorem{defn}[thm]{Definition}

\newtheorem{prop}[thm]{Proposition}

\newtheorem{ex}[thm]{Example}
\newtheorem{algo}{Algorithm}[section] 
\newtheorem{rem}[thm]{Remark}
}
\def\map{\rightarrow}
\newcommand{\mto}{\multimap}
\newcommand{\pto}{\nrightarrow}
\renewcommand{\emptyset}{\varnothing}
\renewcommand{\rho}{\varrho}
\renewcommand{\phi}{\varphi}
\renewcommand{\epsilon}{\varepsilon}
\def\cC{\text{$\mathcal C$}}
\def\cE{\text{$\mathcal E$}}
\newcommand{\cl}{\operatorname{cl}}
\newcommand{\opn}{\operatorname{opn}}
\newcommand{\inte}{\operatorname{int}}
\newcommand{\dom}{\operatorname{dom}}
\newcommand{\im}{\operatorname{im}}
\renewcommand{\emptyset}{\varnothing}
\newcommand{\Inv}{\operatorname{Inv}}
\newcommand{\Endo}{\operatorname{Endo}}
\newcommand{\Auto}{\operatorname{Auto}}
\newcommand{\Mod}{\operatorname{Mod}}
\def\begeq#1{\begin{equation}\mylabel{#1}}
\def\endeq{\end{equation}}
\def\mathobj#1{\mbox{$#1$}}
\def\NN{\mathobj{\mathbb{N}}}
\def\QQ{\mathobj{\mathbb{Q}}}
\def\RR{\mathobj{\mathbb{R}}}
\def\ZZ{\mathobj{\mathbb{Z}}}
\def\rep#1{\mbox{$\langle#1\rangle$}}
\def\setof#1{\mbox{$\{\,#1\,\}$}}
\def\0#1{\hbox{\kern25pt}$ #1 $\\}
\def\1#1{\hbox{\kern40pt}$ #1 $\\}
\def\2#1{\hbox{\kern55pt}$ #1 $\\}
\def\3#1{\hbox{\kern70pt}$ #1 $\\}
\newcounter{li}
\def\begalg#1{\begin{algo}\mylabel{#1}\normalshape:\small\baselineskip 10pt\\}
\def\endalg{\end{algo}}
\def\Figures(include=#1,cat=#2){
  \renewcommand{\textfraction}{.20}
  \renewcommand{\topfraction}{.80}
  \renewcommand{\bottomfraction}{.80}
  \renewcommand{\floatpagefraction}{.80}
  \newcount\figcount
  \figcount=0
  \let\includefigures=#1
  \def\figcat{#2}
}
\def\FigureFromFile[#1][#2](#3)#4
\def\FigureFromFileTwoD[#1][#2,#3](#4)#5
\def\FigureF<#1>[#2](#3)#4
\def\Figure[#1](#2)#3
\newcommand{\kp}{\mathcal{K}}
\newcommand{\x}{\mathcal{X}}
\renewcommand{\pto}{\nrightarrow}
\renewcommand{\emptyset}{\varnothing}
\renewcommand{\subset}{\subseteq}
\renewcommand{\rho}{\varrho}
\renewcommand{\phi}{\varphi}
\renewcommand{\epsilon}{\varepsilon}
\newcommand{\Leray}{L_{\mathrm{Leray}}}
\def\Endo{\mbox{$\operatorname{Endo}$}}
\def\Auto{\mbox{$\operatorname{Auto}$}}
\def\Mono{\mbox{$\operatorname{Mono}$}}
\def\Path{\mbox{$\operatorname{Path}$}}
\newcommand{\lep}[2][]{#2^{\sqsubset#1}}
\renewcommand{\rep}[2][]{#2^{\sqsupset#1}}
\def\dg@twoarrowedvector(#1,#2)#3{%
   \begingroup
   \dg@XTEMP=#1\relax\multiply\dg@XTEMP\m@ne\relax
   \dg@YTEMP=#2\relax\multiply\dg@YTEMP\m@ne\relax
   \dg@ZTEMP=#1\relax
   \ifnum\dg@ZTEMP<\z@
     \multiply\dg@ZTEMP\m@ne\relax \fi
   \ifnum\dg@YTEMP<\z@
     \advance\dg@ZTEMP by -\dg@YTEMP
   \else \advance\dg@ZTEMP by \dg@YTEMP \fi
   \dg@XSHIFT=#2\relax\multiply\dg@XSHIFT\m@ne\relax\multiply\dg@XSHIFT\twoarrowsep\relax
     \divide\dg@XSHIFT by \dg@ZTEMP\relax
   \dg@YSHIFT=#1\relax\multiply\dg@YSHIFT\twoarrowsep\relax\divide\dg@YSHIFT by \dg@ZTEMP\relax
   \begin{picture}(0,0)%
      \thinlines
       \put(0,10){\circle{0.1cm}}%
      \put(-\dg@XSHIFT,0){\line(#1,#2){#3}}%
    \end{picture}%
   \endgroup}%
\newcommand{\DS}{\displaystyle}
\begin{document}

\author{Jonathan Barmak}
\address{Jonathan Barmak, Universidad de Buenos Aires,
Facultad de Ciencias Exactas y Naturales,
Departamento de Matem\'atica, Buenos Aires, Argentina.
CONICET-Universidad de Buenos Aires, Instituto de Investigaciones Matem\'aticas Luis A. Santal\'o (IMAS), Buenos Aires, Argentina}
\email{jbarmak@dm.uba.ar}

\author{Marian Mrozek}
\address{Marian Mrozek, Division of Computational Mathematics,
  Faculty of Mathematics and Computer Science,
  Jagiellonian University, ul.~St. \L{}ojasiewicza 6, 30-348~Krak\'ow, Poland}
\email{Marian.Mrozek@uj.edu.pl}
\author{Thomas Wanner}
\address{Thomas Wanner, Department of Mathematical Sciences,
George Mason University, Fairfax, VA 22030, USA}
\email{twanner@gmu.edu}
\date{today}
\thanks{J.B. is a researcher of CONICET; he is partially supported by grants PICT 2019-2338,
PICT-2017-2806, PIP 11220170100357CO, UBACyT 20020190100099BA, UBACyT 20020160100081BA.
The research of  M.M.\ was partially supported by
  the Polish National Science Center under Ma\-estro Grant No. 2014/14/A/ST1/00453 and Opus Grant No. 2019/35/B/ST1/00874.
  T.W.\ was partially supported by NSF grant DMS-1407087 and by the Simons Foundation
  under Award~581334.
}
\subjclass[2010]{Primary: 37B30; Secondary: 37E15, 57M99, 57Q05, 57Q15.}
 \keywords{Combinatorial vector field, multivalued dynamics, isolated invariant set, Conley index, finite topological space.}

\title[Conley index for multivalued maps on finite topological spaces]
{Conley index for multivalued maps on finite topological spaces}

\date{Version compiled on \today}

\begin{abstract}
We develop Conley's theory for multivalued maps on finite topological spaces.
More precisely, for discrete-time dynamical systems generated by the iteration
of a multivalued map which satisfies appropriate regularity conditions, we establish
the notions of isolated invariant sets and index pairs, and use them to introduce
a well-defined Conley index. In addition, we verify some of its fundamental
properties such as the Wa\.zewski property and continuation.
\end{abstract}

\maketitle



\section{Introduction}
\label{sec:intro}

Topological methods have always been at the heart of the qualitative study of
dynamical systems. For example, topological fixed point theorems can establish
the existence of stationary states based purely on topological properties of
the underlying system and the space it is acting on. But even more complicated
dynamical behavior can be studied in this way, for example recurrent and
chaotic dynamics. One of the central tools in this context was developed by
Charles Conley in~\cite{conley:78a}. He realized that rather than focusing on
the qualitative study of arbitrary invariant sets, it is advantageous to restrict
one's attention to isolated invariant sets. Broadly speaking, such sets are more
robust to continuous perturbations than general invariant sets. This insight allowed
Conley to associate an index to isolated invariant sets~$S$, which encodes some of
their dynamical properties. The Conley index of~$S$ can be determined without
explicit knowledge of the specific isolated invariant set through associated
index pairs, which provide rough topological enclosures of~$S$. In the case of classical
continuous-time dynamical systems the Conley index can either be defined
as a pointed topological space, or in a more computationally friendly version,
as a homology module. The Conley index for discrete-time dynamical systems, that is iterated maps on Hausdorff topological spaces, 
requires more advanced tools. 

In this paper we present Conley's theory for multivalued maps on finite topological spaces.
The need for such a theory is motivated by attempts to automate the computer aided analysis
of dynamical systems, particularly in modern data driven research.  
In the remainder of this section we explain this motivation in more detail, 
we review important earlier work on this problem, 
and we clarify the computational significance of the proposed theory.

\subsection{Motivation}
Despite the big success of Conley's theory in the qualitative study of differential equations, 
it was quickly realized that the manual verification of isolation is too laborious to be practical
for many concrete problems. 
This initiated interest in computer aided techniques and, 
since a digital computer is inherently finite,  led to questions about the discretization of the theory in time and in space. 
These questions are important and fundamental, because while algorithms act on finite structures, the outcome of the algorithms 
is interpreted in the domain of the analyzed problem --- which is typically a Euclidean space in the case of a differential equation. 
Therefore, a bridge is needed between the finite domain of algorithms and the uncountable domains of time and a Euclidean space. 
In addition, the more of the theory one can shift to the finite domain, the less human interaction is needed 
in guiding the algorithms. 

The discretization in time is achieved by means of numerical methods and leads to discrete-time dynamical systems, 
a subject interesting in itself, regardless of potential computer applications. 
A discretization in space can be achieved by putting a cellular structure on a compact subset of interest in the underlying Euclidean space. However, it is not clear what is the right notion of dynamical system on such a combinatorial object.

The early computer aided techniques focused on computer-assisted proofs. 
In such techniques one starts with knowledge about facts gathered from physical experiments,
and the ultimate goal is to confirm these facts by formal proofs with the assistance of a computer. 
This requires that the dynamical system is given by well-understood explicit formulas for a vector field or a homeomorphism.
Such a situation is typical in the physical sciences where very precise formulas may be obtained from the laws of physics.

The situation is different in biological and social sciences where  data collected from experiments, observations, or computations
is available, often in huge quantities, but with limited or no understanding of its meaning.
Therefore, the first goal is to extract some useful information from the data. 
And in such cases, even the phase space is known only via a sample, 
in the form of a point cloud. Contemporary topological data analysis (TDA, \cite{Ca2009,EH2010}) provides bridges
that enable the approximation of the phase space by a finite simplicial complex or a more general cell complex, whose dimension is usually much larger than that of the original ambient space.
This again leads to a discretization of space and facilitates the study of the topology of the phase space by algorithmic means.
Nevertheless, as we have already said, a useful dynamical system theory on such spaces is a challenge.
One option is to embed the cell complex in a Euclidean space
and interpolate the data in order to obtain an explicit formula for a vector field or a generator of a 
dynamical system. The problem is then reduced to a classical one, which typically 
is studied by numerical experiments. Unfortunately, this involves an artificial re-discretization with no direct 
connection to the original discrete data. 

Another option is to eliminate the classical dynamical system and to construct a combinatorial 
analogue of a dynamical system directly on the discretized space. A simplicial complex or a regular cell complex can be abstractly seen as a {\em Lefschetz complex}~\cite{Le1942}, also known as a {\em based chain complex}~\cite{BaRo2024},
or just as a finite topological space with Alexandrov topology~\cite{alexandrov:37a} induced via the face relation on the set of the cells. A dynamical system on a finite space has to be multivalued to be interesting.
In essence, such a system is a directed graph. However, it is not obvious 
how one can fruitfully use topology with such systems.
A hint comes from the seminal work of Robin Forman~\cite{forman:98a, forman:98b}
on a combinatorial version of Morse theory in the form of the concept of combinatorial vector field. 
While Forman's papers primarily served to extend Morse theory to the case of cell complexes, they also
addressed some more general dynamical concepts. Forman's ideas were then used
to construct Conley theory for combinatorial multivector fields on Lefschetz
complexes in~\cite{mrozek:17a}, and on general finite topological spaces
in~\cite{lipinski:etal:23a} (see also ~\cite{batko:etal:20a, mrozek:etal:22a, mrozek:wanner:21a}).

A combinatorial vector field is inherently a finite concept. Nevertheless, the combinatorial 
dynamical system it generates is a counterpart of a flow, that is, a dynamical system with continuous time.
This is reflected in a demanding definition of such a combinatorial system and,
in consequence, makes the construction of combinatorial vector fields a challenging task~\cite{Juda2020,WSLLK2023}.
In contrast, a multivalued map on a cellular space may easily be constructed from data by binning. 
Moreover, such a construction may be applied not only to data gathered from flows but also data gathered from dynamical systems with discrete time.
Our earlier work~\cite{barmak:etal:20a} indicates that iterated multivalued maps on finite topological spaces combine well with topological tools, providing
a natural combinatorial analogue of dynamical systems with discrete time. 
All this motivates our interest in Conley theory for such maps.

\subsection{Previous work} 
To the best of our knowledge, there is no previous work concerning Conley theory for multivalued maps on finite topological spaces.
Nevertheless, several earlier papers partially address issues which motivate our present work. 
One of the first such topics is the discretization in time, that is, a Conley theory for iterated maps.
The central challenge in this context is the lack of homotopies along trajectories for discrete time. 
In fact, entirely new techniques, based on so-called index maps, were needed. The first such technique, using shape theory, 
was proposed by Robbin and Salamon~\cite{RoSa1988}. A cohomological Conley index, 
more practical in computational terms, was proposed in \cite{mrozek:90a}.
A unifying, general approach was presented by Szymczak~\cite{szymczak:95a}. Later, Franks and Richeson~\cite{franks:richeson:00}
showed that Szymczak's approach is equivalent to a construction  using the more commonly known shift equivalence.

As we already mentioned, combinatorial dynamics has to be multivalued to be interesting.  
Conley's theory has successfully been extended to the case of multivalued discrete-time
dynamics, see for example~\cite{kaczynski:mrozek:95a, stolot:06a,
batko:mrozek:16a, batko:2017, batko:2023} and the references therein.
However, all of these papers concern only discretization in time, not in space. 

In the case of computer-assisted proofs, the need for a Conley theory in the discretized space may be avoided by using interval arithmetic
and multivalued dynamics. The outcome of computations may then be interpreted by means of the classical Conley theory. However, this requires
human support to guide the algorithms used in computer-assisted proofs. 
In consequence, such algorithms cannot be adapted to perform an automated search for interesting features. 
Yet, there is one notable exception: 
In the case of gradient dynamics,  Conley theory on the algorithmic level is not necessary
in order to detect Morse decompositions with the associated Conley-Morse graphs~\cite{BGHKMOP2012,BM2014,KMV:2014,KMV:2016,KMV:2022}.
All that is needed to guide the algorithms in this situation is the theory of directed graphs, combined with finite lattice theory.
No topology is necessary on the algorithmic side. This is related to the fact that a strongly connected 
component of a digraph is always an isolated invariant set for a combinatorial dynamical system --- regardless
of the used topology~\cite[Theorem 4.1]{dey:etal:19a}. However, topology matters tremendously in the case of recurrent dynamics~\cite{LMM2024}.

\subsection{Computational aspects}
As we already mentioned, algorithms addressing problems in continuous mathematics require bridges 
which tie the problems with the combinatorial data structures handled by the algorithm.
The better the bridges, the more automated the actions of the algorithms may be. 
In this paper, entirely motivated by computational issues, we do not present any algorithms. It is too early to do that. 
However, the paper provides the foundation on the combinatorial side for building a bridge towards classical Conley theory. 
Hence, it provides the first step needed to construct automated algorithms for the analysis of sampled dynamical systems. 

\subsection{Outline}
In the present paper, we aim to demonstrate that Conley's theory can be extended
to the case of general dynamical systems on finite topological spaces. As we will
see in more detail in Section~\ref{sec:comb-dyn}, actual dynamical systems on such
combinatorial objects necessarily have to be multivalued and time-discrete.
Thus, we consider the iteration of multivalued maps on finite topological spaces
and define the notions of isolated invariant sets and their Conley index. We
prove that the index is well-defined, and establish some of its basic properties.
While our approach is modeled after previous results~\cite{mrozek:90a,batko:mrozek:16a},
the involved proof techniques are significantly different. This is due to the lack of sufficient separation
in finite topological spaces, and will be addressed in more detail later.

The remainder of this paper is organized as follows. In Section~\ref{sec:prelim}
we recall basic definitions concerning finite topological spaces and continuity
properties of multivalued maps. This is followed in Section~\ref{sec:comb-dyn}
by a brief discussion of combinatorial topological dynamics, which specifically 
demonstrates that on finite topological spaces interesting dynamics can only be
observed in the context of iterating a multivalued map. In addition, we introduce
the central notion of solution in this context. We then turn our attention to
Conley theory. Section~\ref{sec:isoset} is devoted to isolated invariant sets
and Morse decompositions, while Section~\ref{sec:indexpair} is concerned
with index pairs and their properties. Using these results, we can define the
Conley index in Section~\ref{sec:conleyindex}, and derive some of its fundamental
properties in Section~\ref{sec:conleyindexprop}. Finally, Section~\ref{sec:future}
addresses some future work and open problems.


\section{Preliminaries}
\label{sec:prelim}

We begin by recalling basic concepts and definitions for finite topological spaces,
as well as for multivalued maps between them. While we focus only on the essentials,
additional material can be found in~\cite{alexandrov:37a, barmak:11a, barmak:etal:20a}.

By a finite topological space we mean a topological space with finitely many points. Given a finite topological space $X$ and a subspace $A$, we denote by $\opn A$ the open hull of $A$, that is, the smallest open set containing $A$. When $A$ consists of a unique point $a$ we also write $\opn A= \opn a$. Note that $\opn A= \bigcup\limits_{a\in A} \opn a$.
The closure of $A$ is denoted by $\cl A$. Notice that for arbitrary elements $x,y\in X$ the inclusion $x \in \opn y$ is satisfied if and only if $y\in \cl x$. Every finite space has an associated preorder~$\le$ (i.e., a reflexive and transitive relation) on its underlying set given by $x \le y$ if $x\in \cl y$.\footnote{Note that this convention is the one used in~\cite{alexandrov:37a}, and it is the most appropriate one for the setting of dynamics. We would like to point out, however, that alternatively the preorder could be defined by letting $x \le y$ if $x\in \opn y$.
This definition is also extensively used in the literature, see for example the discussion in~\cite{barmak:etal:20a}.} Conversely every finite set with a preorder $\le$ has a corresponding topology with the up-sets as the open sets.
Recall that a subset $A\subseteq X$ is an up-set, if $a\le x$ for some $a\in A$ implies $x\in A$.
Then, the dually defined down-sets correspond to the closed sets in this topology.
A finite space~$X$ is~$T_0$ (i.e., given any two different points there exists an open set containing only one of them) if and only if the preorder is an order (i.e., antisymmetric). Note that a finite space is Hausdorff (or even $T_1$) if and only if it is discrete. A map $f:X\to Y$ between finite spaces
is continuous if and only if it is order preserving, that is, if the inequality~$x\le x'$
always implies $f(x)\le f(x')$. 
Although this correspondence is very useful to understand finite spaces from a combinatorial perspective, we have chosen to use the topological notation $\cl A$ instead of $X_{\le A}=\{x\in X | \ \exists \ a \in A $ with $x\le a \}$ and $\opn A$ instead of $X_{\ge A}=\{x\in X | \ \exists \ a \in A $ with $a\le x \}$ in order to make more evident the connection between this theory and the classical one.

While it may not be evident for some readers, singular homology is well-defined for any topological space, in particular for a finite topological space~\cite{Munkres1984,Hatcher2002}.
For $T_0$ spaces it may be computed as the simplicial homology of the associated order complex.
Given a finite $T_0$ space $X$, its order complex $\kp(X)$ is an abstract simplicial complex whose simplices are the non-empty chains of the poset. Singular homology groups $H_n(X)$ of the finite space are isomorphic to the homology groups of the complex $\kp(X)$ (\cite{mccord:66a}). Therefore, to compute the homology of a finite $T_0$ space we can simply consider the free chain complex $C_*(X)$ were $C_n(X)$ is the free abelian group generated by the chains in $X$ of length $n$. In the other direction, any finite simplicial complex (or more generally any finite regular CW-complex) $K$ can be turned into a finite $T_0$ space by means of the face poset functor $\x$. The space $\x(K)$ is the poset of simplices of $K$ ordered by the face relation. In other words, its topology is given by $x\in \cl y$ if $x$ is a face of $y$. The (singular) homology groups of the finite space are isomorphic to those of the complex.

We say that a multivalued map $F: X \mto Y$ between two topological spaces has 
{\em closed values}, if $F(x)\subseteq Y$ is closed for every $x\in X$. Furthermore, the map~$F$
is called {\em lower semicontinuous} if the small preimage $F^{-1}(H)=\{x \in X | F(x)\subseteq H\}$
is closed for every closed subset $H\subseteq Y$. For a multivalued map $F:X\mto Y$ with
closed values between finite $T_0$ spaces, one can easily verify that being lower semicontinuous
is equivalent to the condition that $x' \le x$ implies $F(x') \subseteq F(x)$, or, in other
words, $x' \in \cl x$ implies $F(x') \subseteq F(x)$, see also~\cite[Lemma~3.5]{barmak:etal:20a}.
Finally, we say that~$F$ has {\em acyclic values}, if for every $x\in X$ the
subspace~$F(x) \subset Y$ is acyclic.

For the majority of the paper, we consider multivalued maps $F:X\mto Y$ between finite $T_0$ spaces which are lower
semicontinuous and have closed values. If we assume in addition that the map has acyclic
values, then the projection $p_1:F\to X$ from the graph
\begin{displaymath}
  F=\{(x,y)\in X\times Y \ | \ y\in F(x)\} \subseteq X\times Y
\end{displaymath}
into~$X$ induces isomorphisms in all homology groups. This in turn implies that for
such multivalued maps there is an induced homomorphism $F_*:H_*(X)\to H_*(Y)$ given
by $F_*=(p_2)_*(p_1)^{-1}_*$ where $p_2:F\to Y$ stands for the other projection (see \cite[Proposition 4.7]{barmak:etal:20a}).


\section{Combinatorial topological dynamics}
\label{sec:comb-dyn}

In this brief section we introduce the notion of a combinatorial dynamical system
on a finite topological space, as well as the assumed topological properties of its
multivalued generator~$F:X\mto X$. Moreover, we indicate why in the setting of a finite
topological space only discrete-time dynamics is of interest. We would like to point out,
however, that through the notion of combinatorial vector fields on finite topological
spaces, one can in fact arrive at a notion of dynamics which is similar in spirit to
the continuous-time case, albeit not the same. Finally, we introduce the
notion of solution, which is central to this paper.

\subsection{Multivalued dynamics on finite topological spaces}

Classical dynamical systems can broadly be divided into two categories --- discrete-time
and continuous-time dynamical systems. In the former case, one is interested in the evolution
of a system state at discrete points in time, and this is usually modeled by the iteration
of a continuous map~$F : X \to X$. Unfortunately, in the context of a finite topological space
this leads to trivial dynamical behavior, with every orbit of the system eventually becoming
periodic. Thus, in order to capture interesting dynamics, one is forced to consider multivalued
maps $F:X\mto X$. While this has already been described in~\cite{batko:etal:20a,
kaczynski:etal:16a, lipinski:etal:23a, mrozek:17a, mrozek:etal:22a, mrozek:wanner:21a},
these papers consider very specific multivalued maps generated by an underlying combinatorial 
vector field or combinatorial multivector field --- and this approach is more in the spirit
of the continuous-time case. See also our comments below.

In contrast, the present paper is devoted to the study of general multivalued discrete-time
dynamical systems on a finite topological space~$X$. Since such general systems cannot rely on
any supporting underlying structure such as a combinatorial multivector field, we need to
impose certain regularity assumptions on the map~$F$. Throughout this paper, we assume
that~$F:X\mto X$ is a lower semicontinuous multivalued map with closed values. These
assumptions are inspired by the case of classical multivalued dynamics~\cite{deimling:92a,
gorniewicz:06a}, and they have also been used recently in the proof of a Lefschetz fixed point
theorem for multivalued maps on finite spaces~\cite{barmak:etal:20a}. We think of the map~$F$
as a {\em combinatorial dynamical system\/}, which is obtained by iterations of the map, and
which naturally leads to the concept of a {\em solution\/} --- as described in more detail in
the following section. For now we would like to point out that a combinatorial dynamical system
may also be viewed as a finite directed graph whose set of vertices is the topological space~$X$,
and with~$F$ interpreted as the map sending a vertex to the collection of its neighbors connected
via an outgoing directed edge. This so-called {\em $F$-digraph} encodes the dynamics of~$F$
on a purely combinatorial level. However, for the derivation of more advanced concepts such
as isolated invariant sets and their Conley index the topological properties of~$X$ and~$F$
are essential. 

In view of our focus on the discrete-time case, it is natural to wonder why we exclude the 
continuous-time case. As the following result shows, the semigroup property of a multivalued
continuous-time dynamical system immediately forces the dynamics to be trivial. In fact,
every orbit of the system has to be constant.
\begin{thm}[Triviality of continuous-time dynamics]
Let $X$ be a finite set and let $F: X \times \RR_{\ge 0} \multimap X$ denote a multivalued
map which satisfies the semigroup property $F(x,t+s)=F(F(x,t),s)$ for every $t,s\ge 0$.
Then $F(x,-): \RR_{> 0}\multimap X$, given by $t\mapsto F(x,t)$, is constant for
every $x\in X$.
\end{thm}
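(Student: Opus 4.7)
The plan is to recast the theorem purely algebraically. Let $M$ denote the \emph{finite} semigroup of all multivalued self-maps of $X$ under composition $(\phi\circ\psi)(x):=\bigcup_{y\in\psi(x)}\phi(y)$, and define $\Phi:\RR_{>0}\to M$ by $\Phi(t):=F(\cdot,t)$. The semigroup property of $F$ translates into the identity $\Phi(t+s)=\Phi(s)\circ\Phi(t)$, and since addition on $\RR_{>0}$ is commutative, the image of $\Phi$ is a commutative subsemigroup of $M$. The desired conclusion that $F(x,\cdot)$ is constant for every $x\in X$ is exactly the statement that $\Phi$ is a constant function.

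The first and most delicate step is to prove that \emph{every element of $\Phi(\RR_{>0})$ is idempotent}. The input is the divisibility identity $\Phi(t)=\Phi(t/n)^n$, which follows by iterating the semigroup property $n$ times. I would then invoke the elementary finite-semigroup fact that there exists an integer $N\ge 1$ (one may take $N=|M|!$) such that $m^N$ is idempotent for \emph{every} $m\in M$; applied to $m=\Phi(t/N)$ this yields that $\Phi(t)=\Phi(t/N)^N$ is idempotent in~$M$. I expect this to be the main obstacle, since no continuity or measurability hypothesis on $F$ is available, so the argument must rely purely on the finiteness of $M$ combined with the divisibility of $(\RR_{>0},+)$.

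With every $\Phi(t)$ idempotent, the image $L:=\Phi(\RR_{>0})$ is a finite commutative idempotent subsemigroup of $M$, i.e., a meet-semilattice with $a\wedge b:=a\circ b$. Finiteness of $L$ ensures a unique minimum $e_0=\bigwedge L$, and writing $L=\{e_1,\dots,e_k\}$ with $e_i=\Phi(t_i)$, one obtains $e_0=\Phi(t_1+\cdots+t_k)=:\Phi(r)$ for some $r>0$.

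To conclude, for arbitrary $u>0$ I would choose an integer $n$ large enough that $nu>r$. Idempotency of $\Phi(u)$ gives $\Phi(u)=\Phi(u)^n=\Phi(nu)$, while writing $nu=r+(nu-r)$ and using the semilattice structure yields $\Phi(nu)=\Phi(r)\wedge\Phi(nu-r)=e_0\wedge\Phi(nu-r)=e_0$. Therefore $\Phi(u)=e_0$ for every $u>0$, which is precisely the desired constancy.
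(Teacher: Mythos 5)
Your proposal is correct; every step checks out, including the one standard fact you quote (in a finite semigroup $M$ there is a uniform exponent, e.g.\ $N=|M|!$, with $m^N$ idempotent for all $m$ — a two-line pigeonhole argument on index and period), and the absorption step $\Phi(u)=\Phi(nu)=\Phi(nu-r)\circ\Phi(r)=e_0$ is valid once $nu>r$. The route, however, is packaged differently from the paper's. The paper first converts $F$ into a single-valued map on the power set $\mathcal{P}(X)$ and argues concretely with eventual images: for each $t$ it forms $R_t=F_t^{\infty}$, shows $F_t^{N!}$ is a retraction onto $R_t$, uses divisibility ($F_{nt}=F_t^n$, $R_{t/N!}=R_t$) to conclude that $R_t$ is the image of $F_t$ and that $F_t$ is the identity on it, proves all the $R_t$ coincide, and closes with the cancellation $F_s=F_{s-t}F_t=F_t$. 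You instead stay inside the finite semigroup of multivalued self-maps, observe that the image of $\Phi$ is commutative, extract idempotency of every $\Phi(t)$ from divisibility plus the uniform idempotent-power exponent, and then collapse the resulting finite semilattice onto its minimum $e_0=\Phi(r)$. The two arguments rest on the same three ingredients — finiteness, divisibility of $(\mathbb{R}_{>0},+)$, and commutativity inherited from the abelian time semigroup — and in fact the paper's assertion that $F_t$ is the identity on its image is exactly your idempotency statement in different words; what differs is the endgame (semilattice minimum versus the identity-on-image cancellation) and the level of abstraction. Your version buys a shorter, more conceptual proof at the price of invoking (elementary) finite-semigroup structure theory; the paper's version is longer but entirely self-contained and explicit about what the maps $F_t$ look like, which fits its expository role in motivating why continuous-time dynamics on a finite space is trivial.
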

\begin{proof}
The map $F$ induces a singlevalued map $F:\mathcal{P}(X)\times \RR_{\ge 0} \to \mathcal{P}(X)$ given by $(A,t)\mapsto F(A,t)$. Here $\mathcal{P}(X)$ denotes the power set of $X$. Note that the identity  $F(A,t+s)=F(F(A,t),s)$ holds for every $t,s\ge 0$. Since $\mathcal{P}(X)$ is finite, it suffices to prove the following assertion:
\begin{itemize}
\item If $Y$ is a finite set and $F:Y\times \RR_{\ge 0} \to Y$ is a singlevalued map satisfying the semigroup property $F(y,t+s)=F(F(y,t),s)$ for every $t,s\ge 0$, then the map $F(y,-)$ is constant on~$\RR_{>0}$ for each $y\in Y$.
\end{itemize}
To show this, note that if $f:Y \to Y$ is any map, then the sequence $(f^n(Y))_{n\in \NN}$ is decreasing. We call $f^{\infty}(Y)\subseteq Y$ its eventual value. It is clear that $f^{\infty}(Y)=f^n(Y)$ for every~$n$ greater than or equal to the cardinality~$N$ of~$Y$. The map $f$ induces a bijection from the eventual value $f^{\infty}(Y)$ to itself, and since the group of bijections has order dividing $N!$, the map $f^{N!}:f^{\infty}(Y) \to f^{\infty}(Y)$ is the identity. This in turn shows that $f^{N!}:Y \to f^{\infty}(Y)$ is a retraction, i.e., it is the identity when restricted to its codomain.

Every $t\ge 0$ induces a map $F_t:Y \to Y$, $y\to F(y,t)$. Denote $R_t=F_t^{\infty}(Y) \subseteq Y$. By the comments above, the iterate $F_t^{N!}: Y \to R_t$ is a retraction. Furthermore, the set~$R_t$ is the set of fixed points of~$F_t^{N!}$. Now let $n\in \NN$ and $t\ge 0$. Then we have $F_{nt}=F_t^n$ in view of our hypothesis on~$F$. Thus $F_{nt}^{N!}=F_t^{nN!}$ fixes every point of $R_t$ and does not fix any point outside $R_t$. This proves that $R_{nt}=R_t$. We deduce that for $t>0$, the set~$R_t$ depends only on the class of~$t$ modulo~$\QQ_{>0}$, i.e., we have $R_t=R_{s}$ if $t^{-1}s\in \QQ$. In particular, this implies $R_{t/N!}=R_t$, and thus~$R_t$ is the image of~$F_t$, and~$F_t$ is the identity on~$R_t$.

Finally, let $t,s > 0$. Since $F_{t+s}=F_sF_t$, the image of $F_{t+s}$ is contained in the image of $F_s$, i.e., $R_{t+s}\subseteq R_s$. Thus $R_t\subseteq R_s$ for every $t\ge s$. Since $R_s=R_{s/n}$ for every $n\in \NN$, one also obtains $R_t \subseteq R_s$ for every $t>0$. This in turn establishes the identity $R_t=R_s$ for every $t,s>0$. Suppose $s>t>0$. Then $F_{s-t}$ is the identity on $R_{s-t}=R_s=R_t$. Since $F_{s}=F_{s-t}F_t$ and $F_{s-t}$ is the identity on the image of $F_t$, then $F_s=F_t$. This proves the assertion.
\qed
\end{proof}

\medskip
The above result shows that it is the semigroup property alone which is incompatible
with nonconstant dynamics if the underlying phase space is finite. As the reader 
undoubtedly noticed, we did not make use of any topological structure on~$X$.
Note, however, that one can mimic the behavior of a continuous-time dynamical
system even on finite topological spaces by restricting dynamical transitions
between subsets to shared boundaries. This is precisely what Forman had in mind
with his combinatorial vector fields, and also lies at the center of the theory
of multivector fields. In contrast, the discrete-time dynamics studied in the
present paper does not have these restrictions, as it allows for transitions
between states without topological closeness.

\subsection{Solutions and invariant sets}

Our study of the dynamics of discrete-time multivalued dynamical systems is based
on the notions of solution and invariant set. These are defined just as in the 
classical situation.

Consider a multivalued map $F: X \mto X$. Then a {\em solution} of $F$ in $A\subset X$
is a partial map $\sigma:\ZZ\pto A$ whose {\em domain}, denoted $\dom \sigma$, is an
interval of integers, and for any $i,i+1\in\dom\sigma$ the inclusion $\sigma(i+1)\in
F(\sigma(i))$ is satisfied. The solution~$\sigma$ is called a {\em full solution\/}
if $\dom \sigma=\ZZ$, otherwise it is a {\em partial solution}. A partial solution
whose domain is bounded is referred to as a {\em path}. We denote the set of all
paths with values in $A\subset X$ by $\Path(A)$. Given a path~$\sigma$ with domain
$\dom\sigma=\ZZ\cap [m,n]$ for some $m,n \in \ZZ$, we call~$\sigma(m)$ and~$\sigma(n)$,
respectively, the left and right {\em endpoint} of~$\sigma$. We denote these endpoints
by the symbols~$\lep{\sigma}$ and~$\rep{\sigma}$, respectively. 


If~$\tau$ is another path with $\dom\tau=\ZZ\cap [m',n']$ and such that $\lep{\tau}
\in F(\rep{\sigma})$ holds, then we define the concatenation of the paths~$\sigma$
and~$\tau$, denoted by~$\sigma.\tau$, as the path with domain $\dom \sigma.\tau :=
\ZZ\cap [m,n+n'-m'+1]$ and defined by
\[
  (\sigma.\tau)(k):=\begin{cases}
                      \sigma(k) & \text{ if $k\in \ZZ\cap [m,n]$},\\
                      \tau(k+m'-n-1) & \text{ if $k\in \ZZ\cap [n+1,n+1+n'-m']$}.
                    \end{cases}
\]
It is straightforward to verify that~$\sigma.\tau$ is indeed a path. 

We now recall the definition of invariance. For this, we say that a solution~$\sigma$
{\em passes} through $x\in X$ if $x=\sigma(i)$ for some $i\in\dom\sigma$. Moreover,
a set~$A\subset X$ is called {\em invariant} if for every $x\in A$ there exists a
full solution in~$A$ which passes through~$x$.
Thus, $A$ is invariant if $A\subset F(A)$ and for each $a\in A$, $F(a)\cap A\neq \emptyset$.


\section{Isolated invariant sets and Morse decompositions}
\label{sec:isoset}

The concept of isolated invariant set lies at the heart of Conley theory. In the
classical situation, an isolated invariant set~$S$ is characterized by the property
that it is the largest invariant set in some neighborhood of~$S$. Unfortunately,
it is not possible to define isolated invariant sets in an analogous way in the
context of finite topological spaces due to the lack of sufficient separation. Therefore, in
this section we introduce an appropriate notion for our setting and derive some
first properties of such isolated invariant sets. We also show how they form the
building blocks for Morse decompositions of phase space. Throughout this section,
we assume that~$X$ is a finite~$T_0$ topological space and that the multivalued
map $F:X\mto X$ is lower semicontinuous with closed values.

\subsection{Isolated invariant sets}
\label{ssec:iso-inv-set}

We begin by introducing the notion of isolating invariant set, which in turn is
based on an isolating set. The latter set is the analogue of the isolating neighborhood
in classical Conley theory, but its topological properties are weaker to account for the
poor separation in finite spaces.

\begin{defn}[Isolated invariant set, isolating set]
\label{defn:isolating-set}
A closed set $N\subset X$ is called an {\em isolating set\/} for an invariant set
$S$ if the following two conditions are satisfied:
\begin{itemize} \setlength{\itemsep}{3pt}
   \item[(IS1)] Every path in~$N$ with endpoints in~$S$ has all its values in~$S$.
   \item[(IS2)] We have the equality $S\cap\cl(F(S)\setminus N) = \emptyset$, i.e., the
                set~$S$ and~$\cl(F(S)\setminus N)$ are disjoint.
\end{itemize}
If such an isolating set for~$S$ exists, we say that~$S$
is an {\em isolated invariant set}.
\end{defn}

Notice that condition~(IS2) is satisfied
if and only if $\opn S \cap (F(S)\setminus N)=\emptyset$. Thus, it is equivalent to 
assuming the inclusion
\begin{itemize} \setlength{\itemsep}{3pt}
   \item[(IS2')] $\opn S \cap F(S) \subset N$.
\end{itemize}
Note also that since $S$ is invariant, $S\subset F(S)$, and hence (IS2') implies $S\subset N$.

Establishing condition~(IS2), or its equivalent reformulation~(IS2'), is the less
intuitive aspect of verifying an invariant set as an isolated invariant set. It is
therefore useful to also have sufficient conditions for its validity. Two of these
are the subject of the following remark.
\begin{rem}[Sufficient conditions for~(IS2)]
\label{rem:suffcond-is2}
Assume that~$S$ is an invariant set and that $N$ is closed. Then any of the
following two conditions imply~(IS2):
\begin{enumerate}
 \item[(i)]  We have $S \subset \inte N$, where~$\inte N$ denotes the interior of $N$, or
 \item[(ii)] the inclusion $F(S) \subset  N$ is satisfied.
\end{enumerate}
Indeed, the first condition is equivalent to $\opn S \subseteq N$, and therefore either
of the above two conditions implies~(IS2'), and thus~(IS2).
\end{rem}
As we mentioned earlier, in classical Conley theory, the isolated invariant
set is uniquely determined by its isolating neighborhood~$N$. In fact, it is the
largest invariant subset of~$N$. In contrast, in the above setting the same set~$N$
may be an isolating set for more than one isolated invariant set. This is illustrated
in the following two examples.
\begin{ex}[A rotational multivalued map]
\label{ex:simplecvf}
{\em 
We begin with a simple example that rotates an equilateral triangle. In the left
part of Figure~\ref{fig:simplecvf} we indicate the action of the map on a
simplicial complex, which is just a two-dimensional simplex. More precisely, the map rotates the triangle in a counterclockwise fashion by~120$^\circ$. This example is inspired by a combinatorial vector field
in the sense of Forman, which contains the three vectors $\{ A, AB \}$, $\{ B, BC \}$,
and $\{ C, AC \}$ along the boundary, as well as the critical cell $\{ ABC \}$.
While we refer the reader to~\cite{forman:98a, forman:98b, kaczynski:etal:16a,
mrozek:wanner:21a} for more details on the general definition of a combinatorial
vector field and its relation to classical dynamics, it is intuitively clear that
in the situation of Figure~\ref{fig:simplecvf} one can observe both an unstable
fixed point at the triangle, as well as periodic motion along its simplicial boundary.
\begin{figure} \centering
  \setlength{\unitlength}{1 cm}
  \begin{picture}(12.0,4.0)
    \put(0.0,0.0){
      \includegraphics[height=4cm]{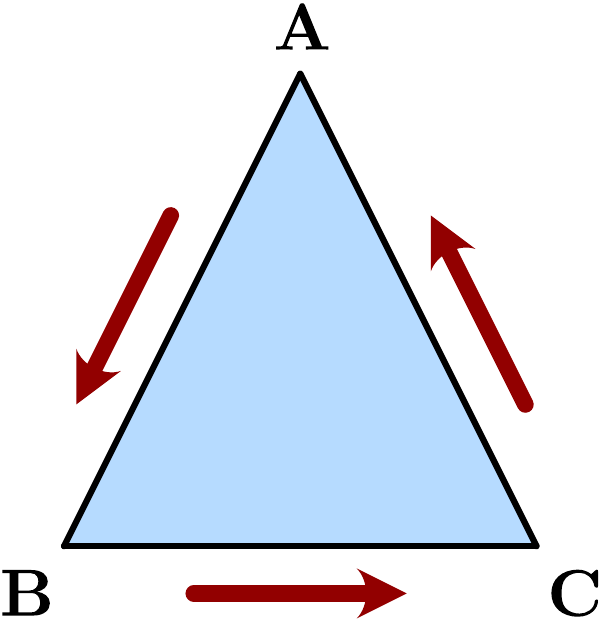}}  
    \put(7.0,0.0){\makebox(4.0,4.0){
      \begin{tabular}{|c||c|} \hline
        $x$ & Elements of $F(x)$ \\[0.5ex] \hline
        A   & B \\
        B   & C \\
        C   & A \\
        AB  & B, C, BC \\
        BC  & A, C, AC \\
        AC  & A, B, AB \\
        ABC & A, B, C, AB, BC, AC, ABC \\ \hline
      \end{tabular}
      }} 
  \end{picture}
  \caption{A simple rotation on a simplicial complex given by one
           triangle, as well as three edges and three vertices. The table
           on the right defines the associated multivalued map $F : X \mto X$ on the finite topological space consisting of all seven simplices, and equipped with the closure operation
           induced by the face relationship.}
  \label{fig:simplecvf}
\end{figure}%

In order to formulate this dynamical behavior via a multivalued map on a finite
topological space, we use the standard construction given by the face poset, that is, the poset $X$ of simplices where $x\le y$ if $x$ is a face of $y$. As we have already mentioned, this corresponds to the topology given by $x \in \cl y$ if and only if~$x$ is a face of~$y$. The associated
multivalued map $F : X \mto X$ is defined in the table in Figure~\ref{fig:simplecvf}.
One can easily verify that~$F$ has closed values, and that it is lower semicontinuous.
Iteration of the map~$F$ leads for example to the following three isolated invariant
sets:
\begin{displaymath}
  S_1 = \{ A, B, C \} \; , \quad
  S_2 = \{ AB, BC, AC \} \; , \quad\mbox{ and }\quad
  S_3 = \{ ABC \} \; .
\end{displaymath}
If we then define the closed sets
\begin{displaymath}
  N_1 = \{ A, B, C \} \; , \quad
  N_2 = \{ A, B, C, AB, BC, AC \} \; , \quad\mbox{ and }\quad
  N_3 = X \; ,
\end{displaymath}
then one can easily verify that~$N_3$ is an isolating set for all three of the
above isolated invariant sets, the set~$N_2$ isolates both~$S_1$ and~$S_2$, and
the set~$N_1$ is an isolating set for~$S_1$ only. Finally, we note that also 
the unions~$S_1 \cup S_2$ and~$S_2 \cup S_3$ are isolated invariant sets,
with isolating sets~$N_2$ and~$N_3$, respectively.

On the other hand, while
the union~$S_1 \cup S_3$ is invariant, it is not an isolated invariant set.
To see this, note that any isolating set for~$S_1 \cup S_3$ has to contain
the closure of~$S_1 \cup S_3$, and therefore $N = X$ would be the only
possibility. Yet, one can easily see that~(IS1) is not satisfied for
this choice.
}
\end{ex}
\begin{ex}[A reflection-based multivalued map]
\label{ex:reflectedcvf}
{\em
Our second example is similar to the previous one but it is induced by the reflection of the triangle about the vertical line through~$A$,
as depicted in the left panel of Figure~\ref{fig:reflectedcvf}. The
corresponding multivalued map $G : X \mto X$ is defined in the table on the
right. Notice that also~$G$ has closed values and is lower semicontinuous.
\begin{figure} \centering
  \setlength{\unitlength}{1 cm}
  \begin{picture}(12.0,4.0)
    \put(0.0,0.0){
      \includegraphics[height=4cm]{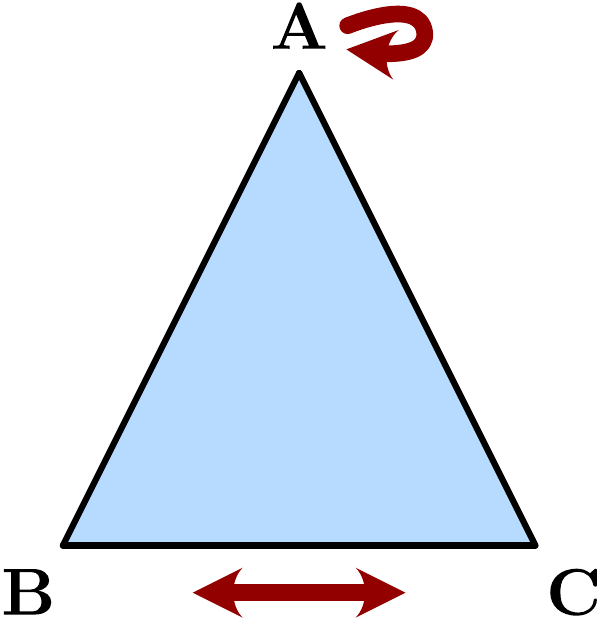}}  
    \put(7.0,0.0){\makebox(4.0,4.0){
      \begin{tabular}{|c||c|} \hline
        $x$ & Elements of $G(x)$ \\[0.5ex] \hline
        A   & A \\
        B   & C \\
        C   & B \\
        AB  & A, C, AC \\
        BC  & B, C, BC \\
        AC  & A, B, AB \\
        ABC & A, B, C, AB, BC, AC, ABC \\ \hline
      \end{tabular}
      }} 
  \end{picture}
  \caption{The map $G:X \mto X$ defined on the right induced
           by a reflection about the vertical line through $A$ indicated on the left.}
  \label{fig:reflectedcvf}
\end{figure}%

Iteration of the map~$G$ leads to new isolated invariant sets. For example,
both the singleton $R_1 = \{ A \}$ and the doubleton $R_2 = \{ B, C \}$ are
examples, and they have associated isolating sets $M_1 = R_1$ and $M_2 = R_2$,
respectively. Notice, however, that both sets are also isolated by $M = X$.
In addition, we have the isolated invariant sets
\begin{displaymath}
  R_3 = \{ BC \} \; , \quad
  R_4 = \{ AB, AC \} \; , \quad\mbox{ and }\quad
  R_5 = \{ ABC \} \; .
\end{displaymath}
If we then define the closed sets
\begin{displaymath}
  M_3 = \{ B, C, BC \} \; , \quad
  M_4 = \{ A, B, C, AB, AC \} \; , \quad\mbox{ and }\quad
  M_5 = X \; ,
\end{displaymath}
then one can easily verify that~$M_k$ is an isolating set for~$R_k$ for $k = 3,4,5$.
Furthermore, the set~$M_5$ isolates both~$R_3$ and~$R_4$ as well. We leave it to
the reader to find additional isolated invariant sets.
}
\end{ex}
The examples above will be analyzed along the paper. We have chosen finite spaces associated with simplicial complexes because the geometric interpretation they have make notions simpler to visualize. However, we want to stress that the theory we develop here can be applied to any finite $T_0$ space.

While at first glance the nonuniqueness of the isolating set seems strange,
it is necessary in finite topological space due to the lack of sufficient
separation. Nevertheless, the following remark sheds more light on this issue.
\begin{rem}[The smallest isolating set]
\label{newr}
It is clear that there is a smallest closed set~$N$ satisfying condition~(IS2'), which is the set
\begin{equation} \label{newr-eq1}
  N = \cl (\opn S \cap F(S)) \; .
\end{equation}
On the other hand, one can easily see that condition~(IS1) is preserved by taking
subsets: If $N'\subset N$ and~$N$ satisfies this condition, then so does~$N'$. In
conclusion, the invariant set~$S$ is an isolated invariant set if and only if the
set~$N$ defined in~(\ref{newr-eq1}) satisfies condition~(IS1). We will see, however,
that it is frequently useful to work with different isolating sets for the same
isolated invariant set.
\end{rem} 
We leave it to the reader to illustrate the above remark in the context of
Examples~\ref{ex:simplecvf} and~\ref{ex:reflectedcvf}, and close this section
with the following simple result.
\begin{prop}
\label{prop:iso-set-intersection}
Assume~$M$ and~$N$ are two isolating sets for an isolated invariant set~$S$. 
Then their intersection~$M\cap N$ is also an isolating set for~$S$.
\end{prop}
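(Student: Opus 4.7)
The plan is to verify the two conditions (IS1) and (IS2) for $M\cap N$ directly from the fact that both $M$ and $N$ satisfy them. First I would observe that $M\cap N$ is closed since $M$ and $N$ are closed in $X$, so the basic topological requirement in Definition~\ref{defn:isolating-set} is immediate.

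Next I would verify (IS1). Suppose $\sigma$ is a path in $M\cap N$ whose endpoints $\lep{\sigma}$ and $\rep{\sigma}$ both lie in $S$. Since every value of $\sigma$ lies in $M\cap N\subseteq N$, the path $\sigma$ is in particular a path in $N$. Applying condition (IS1) for the isolating set $N$ then yields that all values of $\sigma$ lie in $S$, which is precisely what is needed for $M\cap N$. (One could equally well use $M$ here; the argument is symmetric.)

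Finally I would verify (IS2). Using the reformulation (IS2'), it suffices to show $\opn S\cap F(S)\subseteq M\cap N$. Since $M$ is an isolating set for $S$, we have $\opn S\cap F(S)\subseteq M$, and since $N$ is an isolating set for $S$, we have $\opn S\cap F(S)\subseteq N$. Intersecting these two inclusions yields $\opn S\cap F(S)\subseteq M\cap N$, which is exactly (IS2') for $M\cap N$.

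I do not expect any genuine obstacle: the proposition is essentially the observation that (IS1) is inherited by subsets (as already noted in Remark~\ref{newr}) while (IS2'), being a containment statement whose right-hand side is the isolating set, is stable under finite intersections. The only thing worth being careful about is to use the equivalent formulation (IS2') rather than (IS2) itself, since the latter is formulated in terms of the complement $F(S)\setminus N$, which does not interact as cleanly with intersections.
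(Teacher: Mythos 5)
Your argument is correct and matches the paper's proof essentially step for step: closedness of the intersection, (IS1) inherited because a path in $M\cap N$ is a path in one of the isolating sets, and (IS2') obtained by intersecting the two containments $\opn S\cap F(S)\subseteq M$ and $\opn S\cap F(S)\subseteq N$. No further comment is needed.
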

\begin{proof}
Clearly the intersection $M\cap N$ is closed. Since every path in $M\cap N$
is also a path in~$M$, property~(IS1) for $M\cap N$ follows from the validity
of property~(IS1) for~$M$. Finally, it is clear that~(IS2') for~$M$ and~$N$
implies that~(IS2') also holds for $M\cap N$.
\qed
\end{proof}
%
\subsection{Morse decompositions}

Isolated invariant sets as defined in the last section are the fundamental
building blocks for analyzing the global dynamics of a dynamical system.
In general, they can be used to divide phase space into regions of 
recurrent and gradient-like behavior. This leads to the notion of a 
Morse decomposition.

\begin{defn}[Morse decomposition]
\label{defn:morese-decomp}
Consider a lower semicontinuous multivalued map $F:X\mto X$ with closed values,
on a finite~$T_0$ topological space~$X$.
A family $\{M_p\}_{p\in P}$ of mutually disjoint, non-empty, isolated invariant sets
indexed by a poset $P$ is called a {\em Morse decomposition} of~$X$ if for every full
solution~$\gamma$ either all values of~$\gamma$ are contained in the same set~$M_p$,
or there exist indices $q>r$ in~$P$ and $t_q,t_r\in\ZZ$ such that $\gamma(t)\in M_q$
for $t\leq t_q$ and $\gamma(t)\in M_r$ for $t\geq t_r$. In the latter case, the
solution~$\gamma$ is called a {\em connection} from~$M_q$ to~$M_r$. Furthermore,
the sets~$M_p$ are called the {\em Morse sets} of the Morse decomposition.
\end{defn}
In the context of classical dynamics, Morse decompositions are a fairly
difficult object of study, since it is possible for a dynamical system
to have infinitely many different Morse decompositions. Of course, this
cannot happen in the setting of a finite topological space. In fact, there 
is always a finest Morse decomposition which can easily be determined using
graph theoretic methods.

To see this, recall that the dynamics of a multivalued map $F:X\mto X$ can
be encoded via its $F$-digraph~$G_F$, whose vertices are given by the
elements of~$X$, and such that there is a directed edge from~$x$ to~$y$
if and only if~$y \in F(x)$. On~$X$, we can define an equivalence relation by
saying that~$x \sim y$ if and only if there is both a directed path in~$G_F$ 
from~$x$ to~$y$, and one from~$y$ to~$x$.\footnote{Notice that we have~$x \sim x$
for every~$x \in X$, since there always exists a path of length zero from~$x$ to~$x$,
i.e., a path without edges.} This equivalence relation partitions~$X$ into
equivalence classes which are called the {\em strongly connected components\/}
of~$G_F$. Such a component is called {\em trivial\/}, if it consists of a single
vertex which is not connected to itself with an edge, otherwise it is
{\em non-trivial\/}. Moreover, if each strongly connected component (along with all the edges which begin and finish in the component) is
contracted to a single vertex, the resulting graph is a directed acyclic
graph, called the {\em condensation\/} of~$G_F$. After these preparations,
one obtains the following result.
\begin{prop}[Morse decomposition via strongly connected components]
\label{prop:scc-as-morsedecomp}
Consider a lower semicontinuous multivalued map $F:X\mto X$ with closed values
on a finite~$T_0$ topological space~$X$. Denote the non-trivial strongly
connected components of the associated $F$-digraph~$G_F$ by $\{M_p\}_{p\in P}$.
Furthermore, let $q > r$ if there exists a directed path in~$G_F$ from~$M_q$ 
to~$M_r$. Then each of the sets~$M_p$ is an isolated invariant set for~$F$,
and $\{M_p\}_{p\in P}$ is a Morse decomposition of~$X$.
\end{prop}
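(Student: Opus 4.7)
The plan is to establish, in order, that (i) every $M_p$ is invariant, (ii) every $M_p$ admits an isolating set, and (iii) the family $\{M_p\}_{p\in P}$ satisfies the trajectory-sorting condition of Definition~\ref{defn:morese-decomp}. Non-emptiness and pairwise disjointness of the $M_p$ are immediate because the strongly connected components partition $X$, and $P$ with the reachability order is a poset because the condensation of $G_F$ is a directed acyclic graph, so no two distinct components are mutually reachable.

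For invariance I will use the graph structure of $M_p$ directly. Given $x\in M_p$, non-triviality produces either a self-loop at $x$ (if $|M_p|=1$) or a directed cycle $x=x_0\to x_1\to\cdots\to x_k=x$ whose vertices all lie in $M_p$. Repeating this cycle periodically in both time directions defines a full solution through $x$ with values in $M_p$.

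For isolation I will take $N_p:=\cl M_p$. Condition (IS1) follows from a purely digraph argument: for any $\sigma\in\Path(X)$, the SCC index of $\sigma(t)$ is non-increasing in $t$ along the condensation DAG, so if both endpoints lie in $M_p$ then the index cannot drop to a strictly smaller class and then return, forcing $\sigma(t)\in M_p$ throughout. For (IS2') I will prove the stronger inclusion $\opn M_p\cap F(M_p)\subseteq M_p$: if $y\in F(m)$ for some $m\in M_p$ and $y\geq m'$ for some $m'\in M_p$, then lower semicontinuity gives $F(m')\subseteq F(y)$, and non-triviality of $M_p$ supplies some $m''\in F(m')\cap M_p$, producing an edge $y\to m''$. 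Concatenating with a path $m''\to\cdots\to m$ inside $M_p$ and the edge $m\to y$ shows that $y$ is mutually reachable with $m$, so $y\in M_p$. Since $M_p\subseteq\cl M_p=N_p$, condition (IS2') holds.

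Finally, let $\gamma:\ZZ\to X$ be any full solution. Because the condensation of $G_F$ is acyclic, the map $t\mapsto\operatorname{SCC}(\gamma(t))$ is non-increasing, and finiteness of $X$ forces it to stabilize to some SCC $M^+$ as $t\to+\infty$ and to some SCC $M^-$ as $t\to-\infty$. Each of these must be non-trivial, because $\gamma$ traverses infinitely many consecutive edges within it, producing a cycle. If $M^+=M^-$, then monotonicity forces the SCC to be constant along $\gamma$ and the first alternative of Definition~\ref{defn:morese-decomp} holds; otherwise $M^->M^+$ in $P$, and choosing $t_q,t_r$ past which $\gamma$ has settled into $M^-$ and $M^+$ respectively realizes the second alternative. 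The main obstacle I anticipate is condition (IS2'): it is the only step where the topology of $X$ and the lower semicontinuity of $F$ enter in an essential way, because one must convert the order-theoretic fact $y\in\opn M_p$ into a dynamical return edge from $y$ back into $M_p$; without lower semicontinuity, the pure digraph picture would not suffice.
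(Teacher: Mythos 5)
Your proof is correct, and most of it runs parallel to the paper's argument: invariance is obtained by periodically repeating a cycle, (IS1) from the fact that a path cannot leave a strongly connected component and later return (mutual reachability would force the intermediate point into the component), and the ordering/connection condition from monotonicity of the component along a full solution together with finiteness of~$X$. The genuine divergence is the isolation step. The paper takes $N=X$ as the isolating set for every~$M_p$, so (IS2) is immediate from Remark~\ref{rem:suffcond-is2}(ii) because trivially $F(M_p)\subset X$; as a result its proof uses no topology and no semicontinuity whatsoever, which is precisely the point recalled in the introduction that strongly connected components are isolated invariant sets regardless of the chosen topology. You instead take $N_p=\cl M_p$ and prove the sharper inclusion $\opn M_p\cap F(M_p)\subset M_p$; your argument for it is sound: $m'\le y$ gives $F(m')\subset F(y)$ by lower semicontinuity, non-triviality of~$M_p$ gives some $m''\in F(m')\cap M_p$, and the cycle $y\to m''\to\cdots\to m\to y$ forces $y\in M_p$. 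This buys a stronger conclusion --- the smaller closed set $\cl M_p$ (and hence the minimal set of Remark~\ref{newr}) already isolates~$M_p$ --- at the price of genuinely invoking lower semicontinuity; so your closing remark that semicontinuity is essential is an artifact of your choice of isolating set rather than a feature of the proposition itself.
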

\begin{proof}
We begin by showing that any path which starts and ends in~$M_p$ has to be
completely contained in~$M_p$.

To see this, let $p \in P$ be fixed, let $x,y \in M_p$, let~$\gamma$ denote
any path from~$x$ to~$y$, and let~$z$ denote any point on the path~$\gamma$.
Then~$\gamma$ clearly can be restricted to a path from~$z$ to~$y$. Furthermore,
since~$M_p$ is a non-trivial strongly connected component of~$G_F$, there
exists a path from~$y$ to~$x$. Concatenation of this path with the part
of~$\gamma$ from~$x$ to~$z$ gives a path in~$G_F$ from~$y$ to~$z$. This
immediately implies that~$y \sim z$, and therefore we have $z \in M_p$,
and the above statement follows.

We now turn to the verification of the proposition. It is easy to see
that~$>$ is indeed a (strict) partial order on~$P$, since the condensation of~$G_F$
is acyclic. Moreover, for any $x \in M_p$ one can easily construct a full
solution through~$x$ in~$M_p$, by infinite concatenations of the paths
from~$x$ to~$y$ and from~$y$ to~$x$, for some~$y \in M_p$, again using the
above observation. Thus, every set~$M_p$ is invariant. These sets are
also isolated invariant sets, since the whole space~$X$ is an isolating
set for each~$M_p$. For this, note that~(IS2) follows trivially from
Remark~\ref{rem:suffcond-is2}, and~(IS1) from our above observation.
Finally, if~$\gamma$ denotes an arbitrary full solution, then the acyclicity
of the condensation of~$G_F$ together with the finiteness of~$X$ immediately
implies the existence of $t_q,t_r\in\ZZ$ such that $\gamma(t) \in M_q$
for $t\leq t_q$ and $\gamma(t)\in M_r$ for $t\geq t_r$, for some $q \ge r$.
If $q = r$, then our above observation implies that~$\gamma$ is contained
in~$M_q$, and this completes the proof of the proposition.
\qed
\end{proof}

\medskip
Finding strongly connected components in digraphs can be done efficiently,
and thus the problem of decomposing the dynamics of a multi-valued map~$F$
into {\em recurrent dynamics\/}, given by the Morse sets~$M_p$, and
{\em gradient-like dynamics\/}, encoded in the condensation of~$G_F$,
is inherently computable. Furthermore, one can easily see that the above
result does in fact produce the finest Morse decomposition of~$X$.
It is customary to represent the information about this Morse decomposition
in the form of its {\em Morse graph}. This graph consists of the Hasse
diagram of the poset~$P$ with vertices representing the individual
Morse sets~$M_p$. In other words, it is the subgraph of the condensation
induced by the non-trivial strongly connected components.
\begin{ex}[Morse decompositions for Examples~\ref{ex:simplecvf} and~\ref{ex:reflectedcvf}]
\label{ex:morsedecomp}
{\em
We return to the two examples introduced earlier in this section. Recall
that these examples introduced two multivalued maps $F,G : X \mto X$
on the finite topological space
\begin{displaymath}
  X = \{ A, B, C, AB, AC, BC, ABC \}
\end{displaymath}
induced by a two-dimensional simplex. In Example~\ref{ex:simplecvf} we
identified the three isolated invariant sets
\begin{displaymath}
  S_1 = \{ A, B, C \} , \quad
  S_2 = \{ AB, BC, AC \} , \quad\mbox{ and }\quad
  S_3 = \{ ABC \} ,
\end{displaymath}
and one can easily see that they are all strongly connected components
of the $F$-digraph. Similarly, in Example~\ref{ex:reflectedcvf} we found
the isolated invariant sets
\begin{displaymath}
  R_1 = \{ A \} , \;\;
  R_2 = \{ B, C \} , \;\;
  R_3 = \{ BC \} , \;\;
  R_4 = \{ AB, AC \} , \;\;
  R_5 = \{ ABC \} ,
\end{displaymath}
which also partition the space~$X$ and are again strongly connected
components. Thus, in both of these examples all strongly connected
components are non-trivial, and one obtains the Morse graphs shown
in Figures~\ref{fig:simplemorse} and~\ref{fig:reflectedmorse},
respectively.
\begin{figure} \centering
  \setlength{\unitlength}{1 cm}
  \begin{picture}(8.0,4.0)
    \put(0.0,0.0){
      \includegraphics[height=4cm]{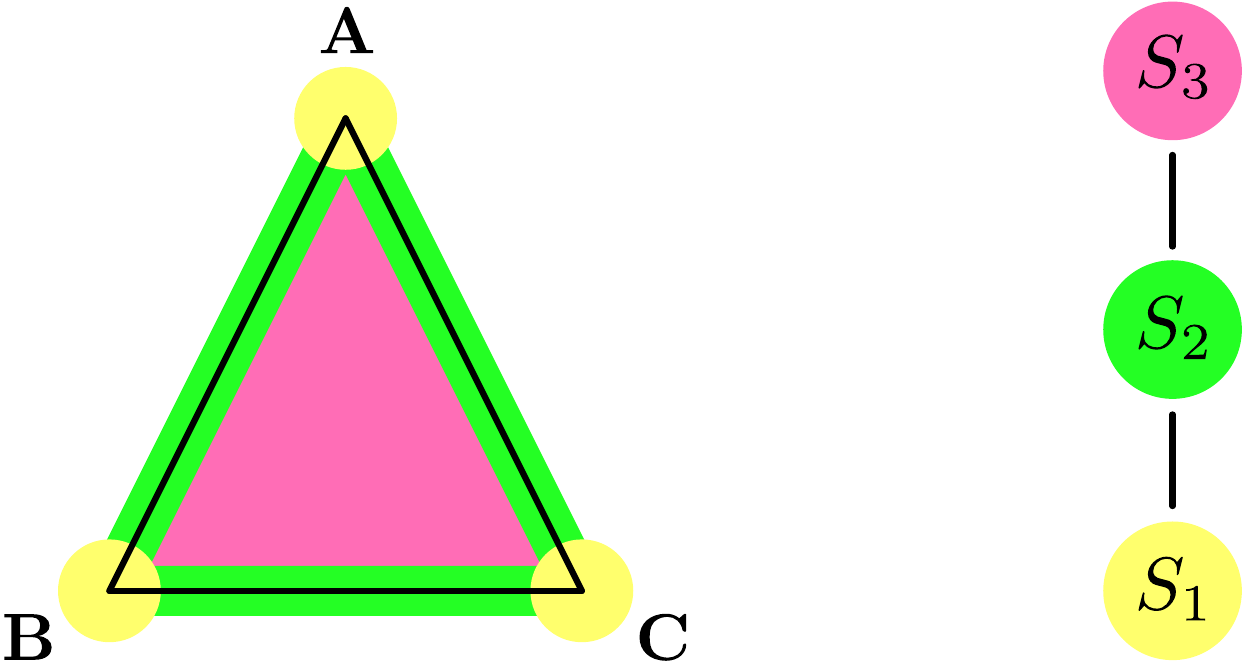}}
  \end{picture}
  \caption{Finest Morse decomposition for the map $F : X \mto X$
           from Example~\ref{ex:simplecvf}. The Morse graph is shown
           on the right, the Morse sets are indicated on the left.}
  \label{fig:simplemorse}
\end{figure}%
\begin{figure} \centering
  \setlength{\unitlength}{1 cm}
  \begin{picture}(8.0,4.0)
    \put(0.0,0.0){
      \includegraphics[height=4cm]{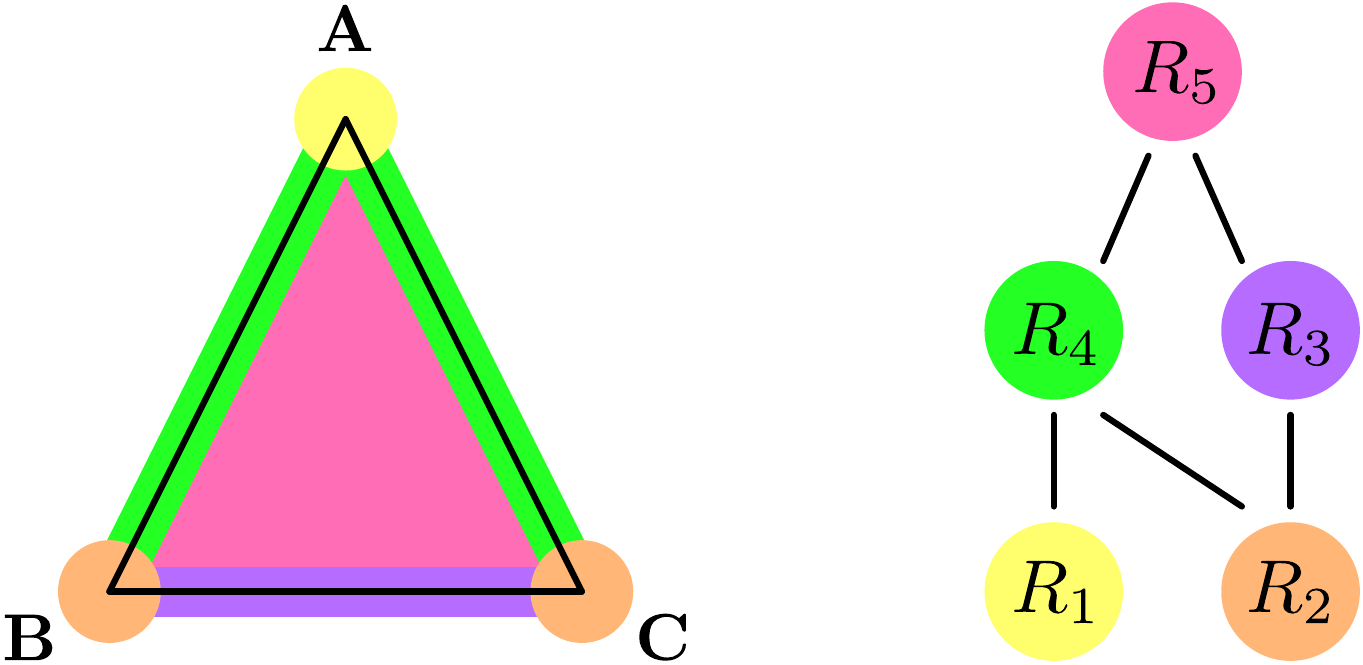}}
  \end{picture}
  \caption{Finest Morse decomposition for the map $G : X \mto X$
           from Example~\ref{ex:reflectedcvf}. The Morse graph is shown
           on the right, the Morse sets are indicated on the left.}
  \label{fig:reflectedmorse}
\end{figure}%

We would like to point out that the Morse sets involved in these 
example exhibit different types of recurrent dynamics. While the
sets~$S_3$, $R_1$, $R_3$, and $R_5$ are all equilibria of the dynamics,
the remaining Morse sets are periodic orbits. More precisely, the
sets~$S_1$ and~$S_2$ are periodic orbits of period~$3$, while the
two sets~$R_2$ and~$R_2$ have period~$2$.
}
\end{ex}
%
%
%
\section{Index pairs}
\label{sec:indexpair}
While isolated invariant sets~$S$ are the fundamental objects of study in
Conley theory, it is their Conley index that provides algebraic information
about the dynamics inside of~$S$. In classical dynamics, this index
information can be computed easily from certain isolating neighborhoods
called isolating blocks, but these are often difficult to find. For this
reason, one usually uses a different object for the index computation,
called an {\em index pair\/}. In the present section, we transfer this
concept to the setting of multivalued maps. Throughout, we again assume
that~$X$ is a finite~$T_0$ topological space and that the multivalued
map $F:X\mto X$ is lower semicontinuous with closed values.
%
%
%
\subsection{Definition and existence of index pairs}
\label{sec:indexpairdef}
For the definition of an index pair, we need to recall two
important concepts. On the one hand, if $A\subseteq X$ is any subset, then
the {\em invariant part\/} $\Inv (A)$ of~$F$ in~$A$ is the set of all
points $x\in A$ for which there exists a full solution in~$A$ which passes
through~$x$. On the other hand, a {\em topological pair\/} in~$X$ is a
pair~$P$ of subsets~$P=(P_1,P_2)$ which satisfy the inclusion
$P_2 \subset P_1$. With this, we have the following central definition.
\begin{defn}[Index pair]
%
%
We say that a topological pair $P=(P_1,P_2)$ of closed subsets of an
isolating set~$N$ for an isolated invariant set~$S$ is an {\em index
pair\/} for~$S$ in~$N$ if the following three conditions are satisfied:
\begin{itemize} \setlength{\itemsep}{3pt}
\item[(IP1)] $F(P_i)\cap N\subset P_i$ for $i=1,2$,
\item[(IP2)] $P_1\cap\cl(F(P_1)\setminus N)\subset P_2$,
\item[(IP3)] $S=\Inv(P_1\setminus P_2)$.
\end{itemize}
In addition, we say that index pair $P=(P_1,P_2)$ is {\em saturated\/}
if $S=P_1\setminus P_2$.
%
%
\end{defn}
We would like to point out that condition~(IP1) implies $F(P_i) \cap N =
F(P_i) \cap P_i$, and therefore~(IP2) could also be replaced by the
inclusion $P_1\cap \cl (F(P_1)\setminus P_1)\subset P_2$ to obtain an
equivalent definition.  

In the remainder of this section, we establish some basic properties
of index pairs. In addition, we show that every isolated invariant set~$S$
with isolating set~$N$ does indeed have an associated index pair. For this
we need another definition. For subsets $S\subset N\subset X$ we define
\begin{eqnarray*}
  \Inv^-(N,S)&:=&\setof{y\in N\mid \exists \sigma\in\Path(N)\; 
                 \text{ with }\; \lep{\sigma}\in S, \; \rep{\sigma}=y},\\[0.5ex]
  \Inv^+(N,S)&:=&\setof{y\in N\mid \exists \sigma\in\Path(N)\;
                 \text{ with }\; \lep{\sigma}=y, \; \rep{\sigma}\in S}.
\end{eqnarray*}
In other words, the set~$\Inv^+(N,S)$ consists of all points in~$N$
from which one can reach~$S$ in forward time with a path in~$N$,
and~$\Inv^-(N,S)$ is the analogous set in backwards time. The following
proposition follows immediately from the definition of $\Inv^\pm(N,S)$. 
\begin{prop}[Inclusion properties]
\label{prop:Inv-M-N}
  Assume that $M\subset N$ are two isolating sets for an isolated
  invariant set~$S$. Then $\Inv^\pm(M,S)\subset \Inv^\pm(N,S)$.
\qed
\end{prop}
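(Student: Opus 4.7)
The plan is to unpack the two definitions and observe that any path witnessing membership on the $M$-side is automatically a path witnessing membership on the $N$-side. Concretely, I would fix $y\in\Inv^+(M,S)$ and produce a path $\sigma\in\Path(M)$ with $\lep{\sigma}=y$ and $\rep{\sigma}\in S$. Since $M\subseteq N$, every value of $\sigma$ lies in $N$, so $\sigma\in\Path(N)$, and likewise $y\in M\subseteq N$; therefore $y\in\Inv^+(N,S)$. The argument for $\Inv^-$ is identical with the roles of $\lep{\sigma}$ and $\rep{\sigma}$ swapped.

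In terms of structure, I would write the proof as one short paragraph handling $\Inv^+$ in full and then appealing to symmetry for $\Inv^-$, since the definitions differ only in which endpoint is declared to be $y$ and which is required to lie in $S$. No use needs to be made of the isolating-set axioms (IS1) or (IS2), nor of the invariance of $S$; the hypothesis that $M$ and $N$ are isolating sets is used only insofar as it guarantees $S\subseteq M\subseteq N$, which is what allows the endpoint condition $\rep{\sigma}\in S$ (or $\lep{\sigma}\in S$) to be meaningful in either ambient set.

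There is no real obstacle here: the statement is a monotonicity property of the partial-path reachability relation with respect to enlarging the ambient set, and follows directly from the observation that $\Path(M)\subseteq\Path(N)$ whenever $M\subseteq N$. The only mild care is to remember to verify that the endpoint $y$ itself lies in $N$, which is immediate from $y\in M$ and $M\subseteq N$. This is presumably why the authors mark the result with \qed at the statement and describe it as an immediate consequence of the definition.
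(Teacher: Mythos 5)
Your proof is correct and is exactly the argument the paper has in mind: the result is stated with a \qed because it follows immediately from $\Path(M)\subseteq\Path(N)$ when $M\subseteq N$, precisely as you argue. Nothing further is needed.
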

In addition, the above two sets have interesting topological properties,
and they can be used to reconstruct an isolated invariant set~$S$, as
the next result shows.
\begin{prop}[Topological properties]
\label{prop:Inv-top-prop}
Assume that $S\subset N\subset X$, that~$S$ is an invariant set,
and that~$N$ is closed. Then the set~$\Inv^-(N,S)$ is closed
and~$\Inv^+(N,S)$ is open in~$N$. If in addition~$N$ isolates~$S$,
then one also has
\begin{equation} \label{eq:Inv-top-prop}
  \Inv^-(N,S) \cap \Inv^+(N,S) = S,
\end{equation}
and the isolated invariant set~$S$ is locally closed in~$X$,
that is $S$ is a difference of two closed sets in $X$ (see \cite[Problem 2.7.1]{En1989}).
\end{prop}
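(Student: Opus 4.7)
The plan is to exploit the Alexandrov structure of finite $T_0$ spaces: with the convention $x\le y$ iff $x\in\cl y$ used in the paper, closed sets are precisely the down-sets and open sets the up-sets. Thus I will reduce closedness of $\Inv^-(N,S)$ to showing it is down-closed in~$X$, and openness of $\Inv^+(N,S)$ in~$N$ to showing it is up-closed within~$N$. The two enabling facts are that $F$ has closed values (so $y\in F(w)$ implies $\cl y\subset F(w)$) and that $F$ is lower semicontinuous (so $x'\le x$ implies $F(x')\subset F(x)$).

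For closedness of $\Inv^-(N,S)$, I take a path $\sigma$ in~$N$ witnessing $y\in\Inv^-(N,S)$ together with an element $z\le y$, and replace the final value~$y$ of $\sigma$ by~$z$: if $\sigma$ has length at least one with penultimate value~$w$, then $y\in F(w)$ combined with closedness of $F(w)$ yields $z\in F(w)$, while $z\in\cl y\subset N$ keeps the modified path in~$N$. The degenerate length-zero case, in which $y\in S$, is handled by using invariance of~$S$ to pick a predecessor $w\in S$ with $y\in F(w)$ and then proceeding identically. The argument for openness of $\Inv^+(N,S)$ in~$N$ is dual: given $y\in\Inv^+(N,S)$ with witnessing path starting at~$y$, and any $z\in N$ with $y\le z$, lower semicontinuity gives $\sigma(1)\in F(y)\subset F(z)$, so replacing the first value of~$\sigma$ by~$z$ produces a witness for~$z$; invariance again disposes of length zero.

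The isolation hypothesis enters only in the equality~\eqref{eq:Inv-top-prop}. The inclusion $S\subset\Inv^-(N,S)\cap\Inv^+(N,S)$ is immediate: any full solution through $x\in S$ lies in $S\subset N$ and restricts on either side to a backward and a forward witness of~$x$. For the reverse inclusion I take $x$ in the intersection with witnessing paths $\sigma_-$ in~$N$ from some $s_-\in S$ to~$x$ and $\sigma_+$ in~$N$ from~$x$ to some $s_+\in S$, and glue them along the shared value~$x$ into a single path $\tau$ in~$N$ with both endpoints in~$S$. This gluing sidesteps the formal concatenation operator, which would require $x\in F(x)$: in the glued sequence every consecutive pair is already an $F$-pair inherited from one of the two halves, including the transition at~$x$. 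Applying condition~(IS1) to $\tau$ then forces all its values, and $x$ in particular, to lie in~$S$. This gluing step together with the appeal to~(IS1) is the conceptual heart of the proposition and the place I expect to require the most care.

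Finally, local closedness follows by formal bookkeeping. Since $\Inv^+(N,S)$ is open in~$N$, write $\Inv^+(N,S)=U\cap N$ for some open $U\subset X$. Because $\Inv^-(N,S)\subset N$, the equality just proved gives $S=\Inv^-(N,S)\cap U$, which exhibits~$S$ as the intersection of a closed and an open set, or equivalently as the difference $S=\Inv^-(N,S)\setminus\bigl(\Inv^-(N,S)\cap(X\setminus U)\bigr)$ of two closed sets in~$X$, as required.
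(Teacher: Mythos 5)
Your proof is correct and follows essentially the same route as the paper: closedness of $\Inv^-(N,S)$ and openness of $\Inv^+(N,S)$ in~$N$ by replacing an endpoint of a witnessing path (using closed values, respectively lower semicontinuity, with invariance of~$S$ covering the single-point paths), the equality~\eqref{eq:Inv-top-prop} by gluing the two witnessing paths at~$x$ and invoking~(IS1), and local closedness by the same set-theoretic bookkeeping. Your explicit remark that the gluing at~$x$ avoids the formal concatenation operator (which would require $x\in F(x)$) is a welcome clarification of a point the paper passes over with the word ``concatenation,'' but it is not a different argument.
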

\begin{proof}
Denote $N^-:=\Inv^-(N,S)$ and $N^+:=\Inv^+(N,S)$. In order to prove
that~$N^-$ is closed take a $y\in\cl N^-$. Then $y\in\cl y'$ for some
$y'\in N^-$. Hence, we may take a path $\sigma\in\Path(N)$ from a point in $S$ to $y'$. Since~$S$ is invariant, without loss of generality we may assume that $|\sigma|\geq 2$. Since $F$ has closed values, replacing $y'$ by $y$ in $\sigma$ one obtains a new path, so $y\in N^-$. This proves that $N^-$ is indeed closed.


To see that the set~$N^+$ is open in $N$, choose any
$x\in\opn_N N^+ = N\cap\opn N^+$. Then $x\in\opn x'$ for some
$x'\in N^+$. Let $\sigma\in\Path(N)$ be a path from $x'$ to some point in $S$. Since $F$ is lower semicontinuous with closed values, $F(x')\subset F(x)$. Thus, replacing $x'$ by $x$ in $\sigma$ gives another path, so $x\in N^+$. Therefore $N\cap\opn N^+ \subset N^+$, so $N^+$ is open in $N$.


Finally, the inclusion $S\subset N^-\cap N^+$ is obvious.
Suppose now that~$N$ isolates~$S$. To see the opposite inclusion, let
$x\in N^-\cap N^+$ be arbitrary. Then there exist a
path in $N$ from a point in $S$ to $x$ and a path in~$N$ from~$x$ to a
point in~$S$. Concatenation of these gives a path in~$N$ through~$x$, and with endpoints
in~$S$. Hence, since~$N$ isolates~$S$, we obtain $x\in S$
and~(\ref{eq:Inv-top-prop}) holds. Moreover,
the representation~(\ref{eq:Inv-top-prop}) shows that~$S$ can be
written as
\begin{displaymath}
  S = N^- \setminus \left(N \setminus N^+ \right).
\end{displaymath}
Since $N^-$ and~$N \setminus N^+$ are closed, $S$ is locally closed in~$X$.
\qed
\end{proof}

\medskip
The above result shows that also in the multivalued map case, isolated
invariant sets necessarily have to be locally closed. This is reminiscent
of the situation in the multivector case~\cite{lipinski:etal:23a}, and it
provides a sufficient condition for recognizing invariant sets which are
not isolated invariant. In fact, this criterion does not make any reference
to an associated isolating set~$N$. For example, one can easily see that
the set $S_1 \cup S_3$ in Example~\ref{ex:simplecvf} is not locally closed,
and therefore it cannot be an isolated invariant set.

We now turn our attention to the existence of index pairs for isolated
invariant sets. For this, we need the following definition, as well as the
subsequent result.
\begin{defn}[Standard index pair]
\label{defpn}
%
%
Given an isolating set~$N$ for an isolated invariant set~$S$, we define
the {\em standard index pair\/} $P^N=(P^N_1,P^N_2)$ by
\begin{displaymath}
   P^N_1 := \Inv^-(N,S)
   \qquad\mbox{ and }\qquad
   P^N_2 := P^N_1\setminus\Inv^+(N,S).
\end{displaymath}
If we want to explicitly emphasize the dependence of the
index pair on the isolated invariant set~$S$, we also write
$P^{S,N}=(P^{S,N}_1,P^{S,N}_2)$ instead of $P^N=(P^N_1,P^N_2)$.
\end{defn}
\begin{thm}[Existence of saturated index pair]
\label{thm:ip-existence}
   Assume that $N\subset X$ is an isolating set for an isolated
   invariant set~$S$. Then $P^N$ is a saturated index pair for~$S$ in~$N$.
\end{thm}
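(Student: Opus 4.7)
The plan is to verify the four requirements one by one: closedness of $P_1^N$ and $P_2^N$, the saturation identity $S=P_1^N\setminus P_2^N$, and conditions (IP1)--(IP3). Saturation will immediately yield (IP3), since $S$ is invariant implies $\Inv(S)=S$. Closedness of $P_1^N=\Inv^-(N,S)$ is given by Proposition~\ref{prop:Inv-top-prop}; and since $\Inv^+(N,S)$ is open in $N$ and $N$ is closed in $X$, the complement $N\setminus\Inv^+(N,S)$ is closed in $X$, so $P_2^N=P_1^N\cap(N\setminus\Inv^+(N,S))$ is closed. Saturation itself is immediate from the identity $\Inv^-(N,S)\cap\Inv^+(N,S)=S$ proved in Proposition~\ref{prop:Inv-top-prop}: indeed $P_1^N\setminus P_2^N=\Inv^-(N,S)\cap\Inv^+(N,S)=S$.

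For (IP1) with $i=1$, I would take $y\in F(P_1^N)\cap N$ and write $y\in F(x)$ for some $x\in\Inv^-(N,S)$. A path in $N$ from $S$ to $x$ may be extended by appending $y$, since $y\in N$, showing $y\in\Inv^-(N,S)=P_1^N$. For $i=2$, the same extension shows $F(P_2^N)\cap N\subseteq P_1^N$; and if such a $y$ belonged to $\Inv^+(N,S)$, then prepending the source $x\in P_2^N\subseteq N$ to a path in $N$ from $y$ to $S$ would place $x$ in $\Inv^+(N,S)$, contradicting $x\in P_2^N$.

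Condition (IP2) is the main obstacle, because we must propagate information from a limit point outside~$N$ back into the isolating property of~$S$. My approach is to suppose $y\in P_1^N\cap\cl(F(P_1^N)\setminus N)$ but $y\notin P_2^N$, and derive a contradiction. Then $y\in P_1^N\setminus P_2^N=S$ by the saturation identity. Also $y\in\cl\{z\}$ for some $z\in F(x)\setminus N$ with $x\in P_1^N$. The key observation is that since $F$ has closed values, $\cl\{z\}\subseteq\cl F(x)=F(x)$, so in fact $y\in F(x)$. Using a path in $N$ from some $s\in S$ to $x$, appended with $y\in S\subseteq N$, yields a path in~$N$ with both endpoints in $S$; by (IS1) every intermediate value lies in $S$, in particular $x\in S$. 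Consequently $z\in F(S)$, and since $y\in S$ gives $z\in\opn y\subseteq\opn S$, we conclude $z\in\opn S\cap F(S)\subseteq N$ by the equivalent form (IS2$'$) of the isolation condition, contradicting $z\notin N$. Therefore $y\in P_2^N$, which establishes (IP2) and completes the proof.
\qed
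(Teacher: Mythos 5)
Your proposal is correct and follows essentially the same route as the paper: closedness and the identity $\Inv^-(N,S)\cap\Inv^+(N,S)=S$ from Proposition~\ref{prop:Inv-top-prop} give saturation and (IP3), (IP1) follows by extending paths in $N$, and (IP2) is proved by the same contradiction argument using closed values of $F$ and (IS1). The only cosmetic difference is that you invoke the isolation condition in its equivalent form (IS2$'$) ($z\in\opn S\cap F(S)\subset N$) rather than (IS2) ($y\in S\cap\cl(F(S)\setminus N)$), which the paper itself notes are interchangeable.
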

\begin{proof}
It follows from Proposition~\ref{prop:Inv-top-prop} that the sets~$P^N_1$
and~$P^N_2$ are both closed. Moreover, property~(IP1) is a straightforward
consequence of the definition of the sets~$\Inv^-(N,S)$ and~$\Inv^+(N,S)$.
From~\eqref{eq:Inv-top-prop} we obtain $S = P^N_1 \setminus P^N_2$, which
establishes both~(IP3) and the fact that~$P^N$ is saturated, once
condition~(IP2) has been proved.

Thus, it remains to verify that property~(IP2) is satisfied. For this, assume
to the contrary that there exists an element $y\in (P_1^N \cap \cl (F(P_1^N)
\setminus N)) \setminus P^N_2$. This implies that $y\in P^N_1\setminus P^N_2=S$,
and there exists $y'\in F(P^N_1)\setminus N$ such that $y\in \cl y'$.
Let $x\in P^N_1$ be such that $y'\in F(x)$. Then $y\in\cl y'\subset\cl F(x)=F(x)$,
since $F$ has closed values. In view of $x\in P^N_1=\Inv^-(N,S)$, there exists a
path $\sigma\in\Path(N)$ such that $\lep{\sigma}\in S$ and $\rep{\sigma}=x$. It
follows that $\sigma\cdot y$ is a path in~$N$ with endpoints in~$S$, and
therefore~(IS1) yields~$x\in S$. This in turn implies $y'\in F(x)\subset F(S)$.
Thus, one obtains $y\in S\cap\cl y'\subset S\cap \cl(F(S)\setminus N)$,
which contradicts~(IS2).
\qed
\end{proof}

\medskip
The standard index pair~$P^N$ that can be associated with every
isolated invariant~$S$ with isolating set~$N$ will be important for our further
considerations. Yet, as we pointed out earlier, this is only one possible 
choice among many. 
In particular, although the standard index pair is sufficient to define the Conley index, 
the flexibility in choosing index pairs matters when addressing properties of the index, 
for instance continuation (see Sec.~\ref{sec:continuation}).
While the collection of index pairs will be further studied
in the next section, we close this one with a simple observation.
\begin{prop}[Inclusion property of standard index pairs]
\label{prop:ip-M-N}
  Assume $M\subset N$ are two isolating sets for an isolated invariant set~$S$.
  Then the associated standard index pairs satisfy $P^M_i\subset P^N_i$ for $i=1,2$.
\end{prop}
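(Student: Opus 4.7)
The plan is to prove the two inclusions $P^M_1 \subset P^N_1$ and $P^M_2 \subset P^N_2$ in turn. The first one is essentially immediate from Proposition~\ref{prop:Inv-M-N}: since $P^M_1 = \Inv^-(M,S)$ and $P^N_1 = \Inv^-(N,S)$, and every path in $M$ is also a path in $N$ because $M \subset N$, the inclusion $\Inv^-(M,S) \subset \Inv^-(N,S)$ is a direct consequence.

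For the second inclusion, I would unpack the definitions: a point $x \in P^M_2$ satisfies $x \in \Inv^-(M,S)$ and $x \notin \Inv^+(M,S)$. Using what was already proved, $x \in \Inv^-(M,S) \subset \Inv^-(N,S) = P^N_1$, so it remains to verify $x \notin \Inv^+(N,S)$. I would do this by contradiction, assuming that there exists a path $\tau \in \Path(N)$ with $\lep{\tau} = x$ and $\rep{\tau} \in S$. Since $x \in \Inv^-(M,S) \subset \Inv^-(N,S)$, there also exists $\sigma \in \Path(N)$ with $\lep{\sigma} \in S$ and $\rep{\sigma} = x$. The concatenation $\sigma.\tau$ is then a path in $N$ with both endpoints in $S$.

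The crucial step is now to invoke property~(IS1) for the isolating set $N$, which forces all intermediate values of $\sigma.\tau$ to lie in $S$. In particular, the forward portion $\tau$ has all its values in $S$. Because $S \subset M$ (this follows from condition~(IS2$'$) for $M$ together with the invariance $S \subset F(S)$, as explicitly observed right after the statement of Definition~\ref{defn:isolating-set}), the path $\tau$ actually lies entirely in $M$. But then $\tau$ witnesses $x \in \Inv^+(M,S)$, contradicting the assumption $x \notin \Inv^+(M,S)$. Hence $x \notin \Inv^+(N,S)$, and therefore $x \in P^N_1 \setminus \Inv^+(N,S) = P^N_2$, as required.

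The only nontrivial point in the argument is recognizing that (IS1) and the containment $S \subset M$ together let one transport a path in $N$ into a path in $M$ once we know its values all lie in $S$; this is where the isolating property of $N$ is genuinely used, as opposed to the purely monotone content that gives the first inclusion.
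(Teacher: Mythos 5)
Your proof is correct, and the first inclusion is handled exactly as in the paper, via Proposition~\ref{prop:Inv-M-N}. For the second inclusion, however, the paper takes a shorter route: it simply invokes equation~\eqref{eq:Inv-top-prop} from Proposition~\ref{prop:Inv-top-prop}, which gives $P^M_2 = P^M_1\setminus S$ and $P^N_2 = P^N_1\setminus S$, so that $P^M_2 = P^M_1\setminus S \subset P^N_1\setminus S = P^N_2$ follows immediately from the first inclusion. Your argument instead unpacks the definitions and, in effect, re-proves inline the nontrivial direction of~\eqref{eq:Inv-top-prop} for~$N$ (a point in $\Inv^-(N,S)\cap\Inv^+(N,S)$ lies in $S$ by~(IS1)), and then transfers the witnessing path back into~$M$ via $S\subset M$; this is sound and makes the role of~(IS1) and of $S\subset M$ explicit, at the cost of redoing work that Proposition~\ref{prop:Inv-top-prop} already encapsulates. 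One minor technical point: the concatenation $\sigma.\tau$ as formally defined in the paper requires $\lep{\tau}\in F(\rep{\sigma})$, whereas in your setting $\rep{\sigma}=\lep{\tau}=x$; you should glue the paths by dropping the duplicated occurrence of~$x$ (concatenating $\sigma$ with $\tau$ restricted past its first entry, and treating the degenerate case where $\tau$ is a single point, i.e.\ $x\in S$, directly). This is exactly the informal gluing the paper itself performs in the proof of Proposition~\ref{prop:Inv-top-prop}, so it is a cosmetic fix rather than a gap.
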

\begin{proof}
The inclusion $P^M_1\subset P^N_1$ follows immediately from
Proposition~\ref{prop:Inv-M-N}. On the other hand, in view
of~\eqref{eq:Inv-top-prop} we have $P^M_2 = P^M_1 \setminus S
\subseteq P^N_1 \setminus S = P^N_2$.
\qed
\end{proof}

\medskip
To close this section, we briefly return to our previous two examples
and present the standard index pairs for selected isolated invariant
sets.
\begin{ex}[Sample standard index pairs]
\label{ex:standardindexpairs}
{\em
For the two simple multivalued maps~$F : X \mto X$ and~$G : X \mto X$ from
Examples~\ref{ex:simplecvf} and~\ref{ex:reflectedcvf}, respectively, on
the finite topological space $X = \{ A, B, C, AB, AC, BC, ABC \}$, one
can easily determine the associated standard index pairs. Recall that in
Example~\ref{ex:simplecvf} we used the closed sets $N_1 = \{ A, B, C \}$,
$N_2 = \{ A, B, C, AB, BC, AC \}$, and~$N_3 = X$ as respective isolating
sets for the three isolated invariant sets~$S_1$, $S_2$, and~$S_3$ given
below. This leads to the standard index pairs
\begin{equation} \label{ex:standardindexpairs1}
  \begin{array}{llclcl}
    \DS P^{S_1,N_1}_1 = N_1, &
      \DS P^{S_1,N_1}_2 = \emptyset &
      \;\mbox{ for }\; &
      \DS S_1 = \{ A, B, C \} &
      \;\mbox{ in }\; & N_1, \\[0.5ex]
    \DS P^{S_2,N_2}_1 = N_2, &
      \DS P^{S_2,N_2}_2 = N_1 &
      \;\mbox{ for }\; &
      \DS S_2 = \{ AB, BC, AC \} &
      \;\mbox{ in }\; & N_2, \\[0.5ex]
    \DS P^{S_3,N_3}_1 = N_3, &
      \DS P^{S_3,N_3}_2 = N_2 &
      \;\mbox{ for }\; &
      \DS S_3 = \{ ABC \} &
      \;\mbox{ in }\; & N_3.
  \end{array}
\end{equation}
For example, in order to establish the second standard index pair in
this list, note that $\Inv^-(N_2,S_2) = \{ A, B, C, AB, BC, AC \}$ and
$\Inv^+(N_2,S_2) = \{ AB, BC, AC \}$, which immediately yields the
above form for~$P^{S_2,N_2}$.

We now turn our attention to Example~\ref{ex:reflectedcvf}. In this case,
we defined the closed sets $M_1 = \{ A \}$, $M_2 = \{ B, C \}$, $M_3 =
\{ B, C, BC \}$, $M_4 = \{ A, B, C, AB, AC \}$, as well as $M_5 = X$, as
respective isolating sets for the isolated invariant sets~$R_k$ given 
below. More precisely, one obtains the standard index pairs
\begin{equation} \label{ex:standardindexpairs2}
  \begin{array}{llclcl}
    \DS P^{R_1,M_1}_1 = M_1, &
      \DS P^{R_1,M_1}_2 = \emptyset &
      \;\mbox{ for }\; &
      \DS R_1 = \{ A \} &
      \;\mbox{ in }\; & M_1, \\[0.5ex]
    \DS P^{R_2,M_2}_1 = M_2, &
      \DS P^{R_2,M_2}_2 = \emptyset &
      \;\mbox{ for }\; &
      \DS R_2 = \{ B, C \} &
      \;\mbox{ in }\; & M_2, \\[0.5ex]
    \DS P^{R_3,M_3}_1 = M_3, &
      \DS P^{R_3,M_3}_2 = M_2 &
      \;\mbox{ for }\; &
      \DS R_3 = \{ BC \} &
      \;\mbox{ in }\; & M_3, \\[0.5ex]
    \DS P^{R_4,M_4}_1 = M_4, &
      \DS P^{R_4,M_4}_2 = M_1 \cup M_2 &
      \;\mbox{ for }\; &
      \DS R_4 = \{ AB, AC \} &
      \;\mbox{ in }\; & M_4, \\[0.5ex]
    \DS P^{R_5,M_5}_1 = M_5, &
      \DS P^{R_5,M_5}_2 = M_3 \cup M_4 &
      \;\mbox{ for }\; &
      \DS R_5 = \{ ABC \} &
      \;\mbox{ in }\; & M_5.
  \end{array}
\end{equation}
Thus, we have identified the standard index pairs for all
isolated invariant sets contained in the Morse decompositions
shown in Figures~\ref{fig:simplemorse} and~\ref{fig:reflectedmorse}.
}
\end{ex}
%

%
\subsection{Properties of index pairs}
\label{sec:indexpairprop}
In the last section, we introduced the notion of an index pair
$P = (P_1,P_2)$ associated with an isolated invariant set~$S$ and its
isolating set~$N$. These index pairs will prove to be central for the
definition of the Conley index. Yet, as we already mentioned several times,
index pairs are not unique, and the present section collects results on
the construction of a variety of index pairs. These results will be crucial
for the next section, which introduces the Conley index.

In the following, we assume that~$N$ is an isolating set for the isolated
invariant set~$S$. If $P=(P_1,P_2)$ and $Q=(Q_1,Q_2)$ are two topological
pairs, we use the abbreviation $P\subset Q$ for the validity of the two
inclusions $P_1\subset Q_1$ and $P_2\subset Q_2$. Furthermore, by~$P\cap Q$
we denote the pair $(P_1\cap Q_1, P_2\cap Q_2)$. We begin by showing that
index pairs are closed under intersection.
\begin{lem}[Intersection preserves index pairs]
\label{inter1}
If~$P$ and~$Q$ are two index pairs for an isolated invariant set~$S$ in an
isolating set~$N$, then so is~$P\cap Q$.
\end{lem}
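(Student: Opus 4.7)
The plan is to verify the three conditions (IP1), (IP2), (IP3) for the pair $P\cap Q = (P_1\cap Q_1, P_2\cap Q_2)$. Closedness of $P_i\cap Q_i$ and the inclusion $P_2\cap Q_2\subseteq P_1\cap Q_1 \subseteq N$ are immediate from the corresponding facts about $P$ and $Q$.

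For (IP1), I would simply observe that $F(P_i\cap Q_i)\subseteq F(P_i)\cap F(Q_i)$ (since $F$ is a multivalued map, images are monotone in the argument), so
\[
  F(P_i\cap Q_i)\cap N \;\subseteq\; \bigl(F(P_i)\cap N\bigr) \cap \bigl(F(Q_i)\cap N\bigr) \;\subseteq\; P_i\cap Q_i
\]
for $i=1,2$. For (IP2), monotonicity of the closure gives $\cl(F(P_1\cap Q_1)\setminus N)\subseteq \cl(F(P_1)\setminus N)$, and intersecting with $P_1\cap Q_1\subseteq P_1$ yields a set contained in $P_2$ by (IP2) for $P$; symmetrically it is contained in $Q_2$, hence in $P_2\cap Q_2$.

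The substantive step is (IP3). The forward inclusion $S\subseteq \Inv\bigl((P_1\cap Q_1)\setminus(P_2\cap Q_2)\bigr)$ follows because (IP3) for $P$ and $Q$ gives $S\subseteq (P_1\setminus P_2)\cap (Q_1\setminus Q_2)$, which is contained in $(P_1\cap Q_1)\setminus(P_2\cap Q_2)$, and $S$ is invariant. For the reverse inclusion, given a full solution $\sigma:\ZZ\to (P_1\cap Q_1)\setminus(P_2\cap Q_2)$, I would aim to show that $\sigma$ is actually entirely contained in $P_1\setminus P_2$ or entirely in $Q_1\setminus Q_2$; then (IP3) for $P$ or $Q$ will place every $\sigma(i)$ in $S$. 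The key idea is that by (IP1) applied to $P_2$, if $\sigma(i)\in P_2$ then $\sigma(i+1)\in F(\sigma(i))\cap N\subseteq P_2$ (using $\sigma(i+1)\in P_1\cap Q_1\subseteq N$). Hence
\[
  A:=\{i\in\ZZ: \sigma(i)\in P_2\} \quad\text{and}\quad B:=\{i\in\ZZ: \sigma(i)\in Q_2\}
\]
are both upward-closed subsets of $\ZZ$. The hypothesis $\sigma(i)\notin P_2\cap Q_2$ forces $A\cap B=\emptyset$. Since two non-empty upward-closed subsets of $\ZZ$ must intersect, one of $A, B$ is empty, which is exactly the dichotomy we wanted.

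The main obstacle I expect is the reverse inclusion in (IP3): it is tempting to argue pointwise, but an individual $\sigma(i)$ being outside $P_2\cap Q_2$ is much weaker than being outside both $P_2$ and $Q_2$. The fix is precisely to exploit the positive-invariance of the exit sets $P_2$ and $Q_2$ along trajectories coming from (IP1), turning a pointwise condition into a global dichotomy on the full solution.
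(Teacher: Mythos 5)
Your proof is correct and follows essentially the same route as the paper: (IP1) and (IP2) by monotonicity, and for (IP3) the forward invariance of the exit sets $P_2$ and $Q_2$ along a full solution (via (IP1)), so that avoiding $P_2\cap Q_2$ forces the solution to lie entirely in $P_1\setminus P_2$ or $Q_1\setminus Q_2$. The paper phrases this as a contradiction at $\sigma(\max\{p,q\})$ rather than your upward-closed-sets dichotomy, but the argument is the same.
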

\begin{proof}
Applying property~(IP1) of~$P$ we get $F(P_i\cap Q_i)\cap N\subset
F(P_i)\cap N\subset P_i$. Similarly, we obtain $F(P_i\cap Q_i)\cap
N\subset Q_i$. Therefore, $F(P_i\cap Q_i)\cap N\subset P_i\cap Q_i$
for $i = 1,2$, which proves the inclusions in~(IP1) for~$P\cap Q$.

As for the second property~(IP2) of an index pairs, we observe that
since both~$P$ and~$Q$ satisfy it, one obtains the inclusions
\begin{eqnarray*}
  P_1 \cap Q_1 \cap \cl (F(P_1\cap Q_1)\setminus N) &\subset &
    P_1 \cap \cl (F(P_1)\setminus N) \cap Q_1 \cap \cl (F(Q_1)\setminus N) \\[0.5ex]
  & \subset & P_2\cap Q_2.
\end{eqnarray*}
It remains to establish~(IP3). First observe that in view of~(IP3)
for both~$P$ and~$Q$ we have
\begin{displaymath}
  S \;\subset\;
  (P_1\setminus P_2)\cap (Q_1\setminus Q_2) \;\subset\;
  (P_1\cap Q_1)\setminus (P_2\cap Q_2).
\end{displaymath}
Therefore, $S=\Inv S\subset \Inv((P_1\cap Q_1)\setminus (P_2\cap Q_2))$.
To prove the opposite inclusion, assume to the contrary that there exists
$y\in \Inv((P_1\cap Q_1)\setminus (P_2\cap Q_2))\setminus S$. Moreover,
let $\sigma:\ZZ\to (P_1\cap Q_1)\setminus (P_2\cap Q_2)$ be a full
solution through $y$. Then there has to exist an index $p\in\ZZ$ such
that $\sigma(p)\in P_2$, because otherwise we obtain the inclusion
$\im\sigma\subset \Inv(P_1\setminus P_2)=S$ in view of~(IP3) for~$P$.
In addition, due to~(IP1) for~$P$, together with $\im\sigma \subset
P_1 \subset N$, one has to have $\sigma(r)\in P_2$ for every $r\geq p$.
Symmetrically, there exists an index $q\in \ZZ$ such that $\sigma (r)
\in Q_2$ for all $r\geq q$. In particular, this implies
$\sigma (\max \{p,q\}) \in P_2\cap Q_2$, a contradiction.
\qed
\end{proof}

\medskip
The next two results introduce a few ways for constructing new index
pairs from two given nested ones.
\begin{lem}[New index pairs from nested ones]
\label{inter2}
If $P\subset Q$ are index pairs in~$N$ for an isolated invariant set~$S$,
then so are~$(P_1,P_1\cap Q_2)$ and~$(P_1\cup Q_2,Q_2)$.
\end{lem}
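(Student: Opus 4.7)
\medskip

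\textbf{Proof plan.}
The plan is to verify each of the three defining conditions (IP1)--(IP3) for the two candidate pairs
\[
  P'=(P_1,\, P_1\cap Q_2)
  \qquad\text{and}\qquad
  Q'=(P_1\cup Q_2,\, Q_2),
\]
using that both $P$ and $Q$ are index pairs together with the nesting $P_1\subset Q_1$ and $P_2\subset Q_2$. Closedness of all four component sets and the inclusions $P_1\cap Q_2\subset P_1$ and $Q_2\subset P_1\cup Q_2$ are automatic. Two observations will be reused throughout: first, since $S=\Inv(Q_1\setminus Q_2)\subset Q_1\setminus Q_2$ we have $S\cap Q_2=\emptyset$, so combined with $S\subset P_1$ this gives $S\subset P_1\setminus Q_2$; second, $P_2\subset Q_2$ implies $P_1\setminus Q_2\subset P_1\setminus P_2$.

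For (IP1), I would argue for $P'$ that $F(P_1\cap Q_2)\cap N\subset (F(P_1)\cap N)\cap(F(Q_2)\cap N)\subset P_1\cap Q_2$ by intersecting the (IP1) inclusions for $P$ and $Q$; (IP1) for the first component $P_1$ is given. For $Q'$, I would use the distributivity $F(P_1\cup Q_2)=F(P_1)\cup F(Q_2)$ to get $F(P_1\cup Q_2)\cap N\subset P_1\cup Q_2$, and (IP1) for $Q_2$ is inherited from $Q$.

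Condition (IP2) is the step I expect to be the main obstacle, especially for $Q'$. For $P'$ it is easy: (IP2) for $P$ gives $P_1\cap \cl(F(P_1)\setminus N)\subset P_2\subset P_1\cap Q_2$ since $P_2\subset P_1$ and $P_2\subset Q_2$. For $Q'$, I would first expand
\[
  \cl\bigl(F(P_1\cup Q_2)\setminus N\bigr)
  \;=\; \cl\bigl(F(P_1)\setminus N\bigr)\cup\cl\bigl(F(Q_2)\setminus N\bigr),
\]
and then distribute the intersection with $P_1\cup Q_2$ into four pieces. Two of the pieces are contained in $Q_2$ trivially. The piece $P_1\cap\cl(F(P_1)\setminus N)$ sits inside $P_2\subset Q_2$ by (IP2) for $P$. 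The remaining piece $P_1\cap\cl(F(Q_2)\setminus N)$ is contained in $Q_1\cap\cl(F(Q_1)\setminus N)\subset Q_2$, because $P_1\subset Q_1$ and the monotonicity $F(Q_2)\subset F(Q_1)$ coming from $Q_2\subset Q_1$ lets us enlarge the closure and then apply (IP2) for $Q$.

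For (IP3), the computations collapse using the two observations above. One has $(P_1)\setminus(P_1\cap Q_2)=P_1\setminus Q_2$ and $(P_1\cup Q_2)\setminus Q_2=P_1\setminus Q_2$, so both candidate pairs give rise to the same set $P_1\setminus Q_2$ whose invariant part must equal $S$. The inclusion $S\subset\Inv(P_1\setminus Q_2)$ follows from $S=\Inv(S)$ together with $S\subset P_1\setminus Q_2$. The reverse inclusion $\Inv(P_1\setminus Q_2)\subset\Inv(P_1\setminus P_2)=S$ uses $P_1\setminus Q_2\subset P_1\setminus P_2$ and (IP3) for $P$. This completes the verification that both $P'$ and $Q'$ are index pairs for $S$ in $N$.
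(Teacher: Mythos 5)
Your proposal is correct and follows essentially the same route as the paper: direct verification of (IP1)--(IP3) for both pairs, with (IP3) reduced in each case to $\Inv(P_1\setminus Q_2)=S$ via the sandwich $S\subset P_1\setminus Q_2\subset P_1\setminus P_2$. The only difference is cosmetic: for (IP2) of $(P_1\cup Q_2,Q_2)$ you decompose the closure into four pieces, whereas the paper bounds the whole expression at once by $Q_1\cap\cl(F(Q_1)\setminus N)\subset Q_2$ using $P_1\cup Q_2\subset Q_1$.
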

\begin{proof}
Let us start with the first pair $(P_1,P_1\cap Q_2)$. The verification
of property~(IP1) is straightforward. Observe that in view of~(IP2)
for the index pair~$P$ we get $P_1\cap \cl (F(P_1)\setminus N) \subset
P_2\subset P_1\cap Q_2$, and therefore~(IP2) holds. To establish~(IP3),
we observe that due to~(IP3) for both~$P$ and~$Q$ one has
\begin{eqnarray*}
  S & = & \Inv S
    \; \subset \; \Inv((P_1\setminus P_2)\cap(Q_1\setminus Q_2))
    \; \subset \; \Inv(P_1\setminus Q_2) \\[0.5ex]
  & = & \Inv(P_1\setminus(P_1\cap Q_2))
    \; \subset \; \Inv(P_1\setminus P_2) \; = \; S.
\end{eqnarray*}
Hence, $\Inv(P_1\setminus (P_1\cap Q_2))=S$, which completes
the proof that $(P_1,P_1\cap Q_2)$ is indeed an index pair.

Consider now the second pair~$(P_1\cup Q_2,Q_2)$. As before, the verification
of~(IP1) is straightforward. In order to establish~(IP3) we observe that as seen
above
\begin{displaymath}
  S=\Inv(P_1\setminus Q_2)= \Inv((P_1\cup Q_2)\setminus Q_2).
\end{displaymath}
Finally, in order to verify~(IP2) for $(P_1\cup Q_2,Q_2)$ we note that
\begin{displaymath}
  (P_1 \cup Q_2) \cap \cl (F(P_1\cup Q_2) \setminus N) \; \subset \;
  Q_1 \cap \cl (F(Q_1) \setminus N) \; \subset \; Q_2,
\end{displaymath}
which yields~(IP2) for the second pair and completes the proof.
\qed
\end{proof}

\medskip
The second lemma is concerned with a useful construction of new index
pairs which includes the action of~$F$ itself. For this, suppose we are 
given two index pairs~$P$ and~$Q$ for an isolated invariant set~$S$ in~$N$,
and such that $P\subset Q$. We then define a topological pair of sets
$G(P,Q)=(G_1(P,Q),G_2(P,Q))$ by
\begin{displaymath}
  G_i(P,Q) = P_i\cup(F(Q_i)\cap N)
  \quad\mbox{ for }\quad
  i=1,2.
\end{displaymath}
Note that we always have $G_2(P,Q)\subset G_1(P,Q) \subset N$, as required
by a topological pair, and that~$G_i(P,Q)$ is closed for $i=1,2$. The latter
fact is due to the closedness of the values of~$F$. While in general the
pair~$G(P,Q)$ is not an index pair for~$S$ in~$N$, the following result
gives sufficient conditions, as well as a number of other useful properties.
\begin{lem}[Properties of the pair~$G(P,Q)$]
\label{lem:G_PQ}
Let $P\subset Q$ be two index pairs for the isolated invariant set~$S$
in~$N$, and let~$G=G(P,Q)$ be defined as above. Then we have the following
properties.
\begin{itemize} \setlength{\itemsep}{3pt}
\item[(i)] $P\subset G\subset Q$.
\item[(ii)] $P_i=Q_i$ implies $P_i=G_i=Q_i$, for $i=1,2$.
\item[(iii)] If $P_1=Q_1$ or $P_2=Q_2$ then $G$ is an index pair in $N$.
\item[(iv)] $F(Q_i)\cap N\subset G_i$, for $i=1,2$.
\item[(v)]  If $P_{3-i}=Q_{3-i}$ and $G_i=Q_i$, then $P_i=Q_i$ for $i=1,2$.
\end{itemize}
\end{lem}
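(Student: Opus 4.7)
The plan is to dispatch (i), (ii), and (iv) by direct unfolding, handle (iii) by a symmetric two-case analysis, and reserve the real work for (v), which I expect to be the main obstacle. For (i), the inclusion $P \subset G$ is built into the definition $G_i = P_i \cup (F(Q_i) \cap N)$, while $G \subset Q$ follows from $P \subset Q$ together with (IP1) for $Q$, which gives $F(Q_i) \cap N \subset Q_i$. For (ii), the assumption $P_i = Q_i$ combined with (IP1) for $P$ yields $F(Q_i) \cap N = F(P_i) \cap N \subset P_i$, so $G_i$ collapses to $P_i = Q_i$. Part (iv) is immediate from the definition of $G_i$.

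For (iii) I will treat the two cases symmetrically. Suppose first that $P_1 = Q_1$: by (ii) we have $G_1 = Q_1$, so it suffices to verify that $(Q_1, G_2)$ satisfies (IP1)--(IP3). Condition (IP1) for $G_2$ follows by expanding $F(G_2) \cap N \subset (F(P_2) \cap N) \cup (F(F(Q_2) \cap N) \cap N)$ and applying (IP1) for $P$ to the first piece, and (IP1) for $Q$ twice to the second (first to trap $F(Q_2) \cap N$ inside $Q_2$, then its $F$-image intersected with $N$ again inside $Q_2$). For (IP2), since $G_1 = P_1$, I invoke (IP2) for $P$ directly and use $P_2 \subset G_2$. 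For (IP3), the sandwich $P_2 \subset G_2 \subset Q_2$ combined with $\Inv(P_1 \setminus P_2) = \Inv(Q_1 \setminus Q_2) = S$ pinches $\Inv(Q_1 \setminus G_2)$ between two copies of $S$. The case $P_2 = Q_2$ is completely symmetric, with (IP2) this time following from $G_1 \subset Q_1$ and (IP2) for $Q$.

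The main difficulty is (v), which I will prove by contradiction using a pigeonhole argument on the finite space $X$. Assume $P_{3-i} = Q_{3-i}$ and $G_i = Q_i$, yet $A := Q_i \setminus P_i$ is nonempty. Since $G_i = Q_i$, every $x \in A$ lies in $F(Q_i) \cap N$ and so admits a preimage $y \in Q_i$ with $x \in F(y)$; were $y \in P_i$, then $x \in F(P_i) \cap N \subset P_i$ by (IP1) for $P$, contradicting $x \in A$, so in fact $y \in A$. Iterating from any $x_0 \in A$ produces a sequence $x_0, x_1, x_2, \ldots$ in $A$ with $x_{k-1} \in F(x_k)$, and finiteness of $A$ forces $x_k = x_{k+n}$ for some $k$ and $n > 0$. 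Traversed forward, the resulting cycle yields a periodic full solution $\sigma : \ZZ \to A$ of $F$.

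To close the argument I will place $\sigma$ inside $S$ via one index pair and outside $S$ via the other. For $i = 2$, the hypothesis $P_1 = Q_1$ gives $A \subset Q_2 \subset Q_1 = P_1$, while $A \cap P_2 = \emptyset$ is immediate, so $\sigma$ lies in $P_1 \setminus P_2$ and hence in $S = \Inv(P_1 \setminus P_2)$; but $S \subset Q_1 \setminus Q_2$ by (IP3) for $Q$, contradicting $\sigma \subset A \subset Q_2$. For $i = 1$, the hypothesis $P_2 = Q_2$ forces $A \cap Q_2 = A \cap P_2 \subset A \cap P_1 = \emptyset$, so $\sigma$ lies in $Q_1 \setminus Q_2$ and hence in $S = \Inv(Q_1 \setminus Q_2) \subset P_1$; this contradicts $\sigma \subset A = Q_1 \setminus P_1$. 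Either way $A$ must be empty, yielding $P_i = Q_i$ and completing the proof.
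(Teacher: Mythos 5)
Your proposal is correct and takes essentially the same approach as the paper: (i), (ii) and (iv) by direct unfolding with (IP1) for $P$ and $Q$, (iii) by the same two-case analysis of (IP1)--(IP3), and (v) by the same backward-recursion-plus-pigeonhole construction of a periodic solution in $Q_i\setminus P_i$ that contradicts (IP3). The only cosmetic difference is that in (v) you argue by contradiction entirely inside $A=Q_i\setminus P_i$, whereas the paper starts from an arbitrary point of $Q_i$ and concludes with a maximality/(IP1) step; the underlying mechanism is identical.
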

\begin{proof}
The first inclusion in (i) is obvious. The second one follows from~(IP1)
for~$Q$. Moreover, property~(ii) is an immediate consequence of~(i).

In order to prove property (iii), let us begin with property~(IP1).
Its verification does not require the hypothesis of~(iii), since we
have
\begin{displaymath}
  F(G_i) \cap N \; = \;
  (F(P_i) \cap N) \cup (F(F(Q_i)\cap N) \cap N) \; \subset \;
  P_i \cup (F(Q_i) \cap N) \; = \; G_i
\end{displaymath}
in view of~(IP1) applied to~$P$ and~$Q$.

If $P_1=Q_1$, then we have $G_1\cap \cl (F(G_1)\setminus N)=P_1\cap
\cl (F(P_1)\setminus N) \subset P_2 \subset G_2$ by~(i), (ii), and~(IP2)
for~$P$, and this establishes~(IP2) for~$G$ in this case. On the other 
hand, if the equality $P_2=Q_2$ holds, then
\begin{displaymath}
  G_1 \cap \cl (F(G_1)\setminus N) \; \subset \;
  Q_1 \cap \cl (F(Q_1)\setminus N) \; \subset \;
  Q_2=G_2,
\end{displaymath}
in view of~(IP2) for~$Q$, (i), and~(ii). This proves~(IP2) for~$G$
also in this case.

In order to verify property~(IP3), observe that by~(IP3) applied to~$P$
and~$Q$ one obtains $S\subset (P_1\setminus P_2) \cap (Q_1\setminus Q_2) =
P_1\setminus Q_2\subset G_1\setminus G_2$, and this in turn immediately
yields $S=\Inv S\subset\Inv ( G_1\setminus G_2)$. According to property~(i)
we have the inclusion $G_1\setminus G_2\subset Q_1\setminus P_2$. Hence,
if $P_1=Q_1$, we obtain $G_1\setminus G_2\subset P_1\setminus P_2$,
and~(IP3) applied to~$P$ further implies $\Inv(G_1\setminus G_2)\subset
\Inv(P_1\setminus P_2)=S$. Similarly, if instead the equality $P_2=Q_2$
holds, then one obtains $G_1\setminus G_2\subset Q_1\setminus Q_2$,
and~(IP3) applied to~$Q$ furnishes $\Inv(G_1\setminus G_2)\subset
\Inv(Q_1\setminus Q_2)=S$. Altogether, we get the inclusion
$\Inv(G_1\setminus G_2) \subset S$, which completes the proof
of~(IP3) for~$G$, and establishes the latter as an index pair,
as claimed in~(iii).

Since property~(iv) is obvious, it remains to establish~(v). For this,
fix $i\in\{1,2\}$ and assume that the identities $P_{3-i}=Q_{3-i}$
and~$G_i=Q_i$ are satisfied. We want to show that $P_i=Q_i$. Since
$P_i\subset Q_i$ by assumption, we only need to verify the
inclusion $Q_i\subset P_i$.

Thus, take an arbitrary point $y\in Q_i$. We will begin by constructing
recursively a function $\sigma:\ZZ_-\to Q_i$ as follows, where~$\ZZ_-$
denotes the set of all nonpositive integers. We set
$\sigma(0):=y\in Q_i$. Assuming $\sigma(-k)\in Q_i$ has already been
defined for $k\in\NN_0$, we consider two cases to define~$\sigma(-k-1)$.
If we have $\sigma(-k)\in P_i$, then we define $\sigma(-k-1):=\sigma(-k)$.
If instead we have $\sigma(-k)\not\in P_i$, then one obtains
from the assumption $Q_i = G_i$ and the above definition $G_i =
P_i\cup(F(Q_i)\cap N)$ that $\sigma(-k)\in F(Q_i)$, and we can
select an element~$\sigma(-k-1)\in Q_i$ which satisfies the inclusion
$\sigma(-k)\in F(\sigma(-k-1))$.

We claim that $\im\sigma\cap P_i\neq\emptyset$. Assume the contrary.
Then $\sigma:\ZZ_-\to Q_i\setminus P_i$ is a solution. Since the
space~$X$ is finite, we can therefore find indices $m,n\in\ZZ_-$
such that $m<n$ and $\sigma(m)=\sigma(n)$. Thus, the point~$\sigma(m)$
lies on a periodic solution in the set difference $Q_i\setminus P_i$.
But then we have $\sigma(m)\in\Inv (Q_i\setminus P_i)$. Consider now 
first the case $i=1$. Then we have $P_2=Q_2$ and $\sigma:\ZZ_-
\to Q_1\setminus P_1$, as well as the inclusion $Q_1\setminus P_1\subset
Q_1\setminus P_2=Q_1\setminus Q_2$. Hence, using property~(IP3) applied
to~$Q$ one obtains that $\sigma(m) \in \Inv (Q_1\setminus Q_2) = S
\subset P_1$, which contradicts our assumption that $\im\sigma\cap P_1
=\emptyset$. Consider now the second case $i=2$. Then one has $P_1=Q_1$
and $\sigma:\ZZ_-\to Q_2\setminus P_2$, as well as $Q_2\setminus
P_2\subset Q_1\setminus P_2=P_1\setminus P_2$. Hence, we get from~(IP3)
applied to~$P$ that $\sigma(m)\in\Inv (P_1\setminus P_2)=S\subset
Q_1\setminus Q_2$. Therefore, $\sigma(m)\not\in Q_2$, again a
contradiction. Thus, we established $\im\sigma\cap P_i\neq\emptyset$.

With this we can immediately complete the proof of~(v). According to the last
paragraph, the index $m:=\max\setof{k\in\ZZ_-\mid \sigma(k) \in P_i}$
is well-defined. We cannot have $m<0$, because in that case one obtains
$\sigma(m+1) \in F(\sigma (m)) \subset F(P_i)$, and due to~(IP1)
applied to $P$ one further gets $\sigma(m+1) \in Q_i \cap F(P_i) \subset
N \cap F(P_i) \subset P_i$, which is a contradiction. Hence, we have to
have $m=0$, and thus $y=\sigma(0)\in P_i$. This completes the proof of
the lemma.
\qed
\end{proof}

\medskip
The next result shows that for nested index pairs $P \subseteq Q$ which
satisfy $P_1 = Q_1$ or $P_2 = Q_2$, it is always possible
to construct a sequence of index pairs between them with certain
mapping properties. While the specifics of this lemma might seem 
strange at first sight, it is essential for proving that the Conley
index computation is independent of the underlying index pair.
\begin{lem}[Interpolating between nested index pairs]
\label{lem:seq-Qi}
Let $P\subset Q$ be index pairs for an isolated invariant set~$S$
in~$N$ such that either $P_1=Q_1$ or $P_2=Q_2$. Then there exists
a sequence of index pairs for $S$ in $N$
\begin{displaymath}
  P=Q^n\subset Q^{n-1}\subset\cdots\subset Q^1\subset Q^0=Q
\end{displaymath}
which satisfy the following:
\begin{itemize} \setlength{\itemsep}{3pt}
\item[(a)] $P_i=Q_i$ implies $Q^k _i=P_i=Q_i$ for all $k=1,2,\dots, n-1$
           and $i=1,2$,
\item[(b)] $F(Q^k _i)\cap N\subset Q^{k+1} _i$ for all $k=0,1,\dots, n-1$
           and $i=1,2$.
\end{itemize}
\end{lem}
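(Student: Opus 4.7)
The plan is to iterate the construction $G(P,\cdot)$ from Lemma~\ref{lem:G_PQ}, starting from $Q$, and exploit the finiteness of $X$ to guarantee termination at $P$. Concretely, I set $Q^0 := Q$ and recursively define
\[
  Q^{k+1} := G(P, Q^k) = \bigl(P_1 \cup (F(Q^k_1)\cap N),\; P_2 \cup (F(Q^k_2)\cap N)\bigr),
\]
as long as $Q^k \ne P$. I then need to verify three things: each $Q^k$ is an index pair for $S$ in $N$, the chain is nested between $P$ and $Q$, and the process terminates in finitely many steps with $Q^n = P$.

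For the inductive step, assume $P \subset Q^k \subset Q$ is a chain of index pairs satisfying the hypothesis that either $P_1 = Q_1$ or $P_2 = Q_2$; by Lemma~\ref{lem:G_PQ}(ii) applied to $P \subset Q^k$, this hypothesis is inherited by $P \subset Q^{k+1}$ (if $P_i = Q_i$, then $P_i = Q^k_i = Q_i$, hence $G_i(P,Q^k) = P_i$). Consequently Lemma~\ref{lem:G_PQ}(iii) applies and $Q^{k+1}$ is an index pair, while Lemma~\ref{lem:G_PQ}(i) gives $P \subset Q^{k+1} \subset Q^k \subset Q$. Property~(a) follows immediately because the equality $P_i = Q_i$ propagates along the chain via~(ii). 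Property~(b) is built into the construction: by Lemma~\ref{lem:G_PQ}(iv) we have $F(Q^k_i)\cap N \subset G_i(P,Q^k) = Q^{k+1}_i$.

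The main issue is termination. Since $X$ is finite and the pairs $Q^k$ are decreasing with respect to inclusion, there must be some smallest $n$ for which $Q^{n+1} = Q^n$, i.e.\ $G_i(P,Q^n) = Q^n_i$ for $i=1,2$. I then want to conclude $Q^n = P$, which is exactly where Lemma~\ref{lem:G_PQ}(v) is needed. Say $P_1 = Q_1$ (the other case is symmetric); by~(a) this gives $P_1 = Q^n_1$, so the hypothesis $P_{3-i} = Q^n_{3-i}$ of~(v) holds for $i=2$. Together with $G_2(P,Q^n) = Q^n_2$, Lemma~\ref{lem:G_PQ}(v) yields $P_2 = Q^n_2$, and hence $Q^n = P$. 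Relabeling if necessary so that the terminal index is exactly $n$, we obtain the desired sequence $P = Q^n \subset Q^{n-1} \subset \cdots \subset Q^0 = Q$ with both~(a) and~(b) satisfied.

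The step I expect to be the main obstacle is the termination argument, because stabilization of the sequence under the operator $G(P,\cdot)$ only forces $Q^{n+1} = Q^n$, not $Q^n = P$; it is precisely the somewhat technical statement Lemma~\ref{lem:G_PQ}(v) (with the asymmetric hypothesis $P_{3-i} = Q_{3-i}$) that bridges this gap, which retrospectively explains why~(v) was proved in that form.
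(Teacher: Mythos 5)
Your proposal is correct and follows essentially the same route as the paper: iterate $Q^{k+1}:=G(P,Q^k)$, use Lemma~\ref{lem:G_PQ}(i)--(iii) inductively for nestedness, the index-pair property and (a), part~(iv) for (b), and finiteness of $X$ plus part~(v) to conclude that the stabilized pair equals $P$. Your explicit check of which index $i$ the hypothesis of~(v) applies to is exactly the intended (if terse, in the paper) use of that statement.
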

\begin{proof}
Define the index pairs~$Q^k$ recursively by $Q^0:=Q$ and $Q^{k+1}:=G(P,Q^k)$
for $k\in\NN$. Using Lemma~\ref{lem:G_PQ}(i), (ii) and (iii), together with
induction on~$k$, one can easily show that the family~$\{Q^k\}$ forms a
decreasing sequence of index pairs with respect to~$k$ which satisfies
property~(a). In addition, Lemma~\ref{lem:G_PQ}(iv) implies that they also
satisfy property~(b) for all $k\in\NN_0$. Finally, since~$X$ is finite,
there has to be an $n\in \NN_0$ such that $Q^n=Q^{n+1}=G(P,Q^n)$, and an
application of Lemma~\ref{lem:G_PQ}(v) shows that then $Q^n=P$.
\qed
\end{proof}

\medskip
For the remainder of this section, we briefly introduce and study a
topological pair which can be associated with an index pair, and which
plays a crucial role for the definition of the {\em index map\/} in the
next section.

To define this topological pair, we again let $P=(P_1,P_2)$ denote an
index pair for an isolated invariant set~$S$ in the isolating set~$N$.
Then we define $\bar{P}:=(\bar{P}_1,\bar{P}_2)$ via
\begin{displaymath}
  \bar{P}_i := P_i \cup \cl (F(P_1)\setminus N)
  \quad\mbox{ for }\quad i = 1,2.
\end{displaymath}
Notice that the new topological pair~$\bar{P}$ extends the index pair~$P$
by adding the closure of the images of~$P_1$ under~$F$ that lie outside
of~$N$. The resulting pair~$\bar{P}$ still consists of closed sets, but
in general it is no longer an index pair. Nevertheless, it will allow us
to study the action of~$F$ on~$P$ on the homological level in the next
section. For now, we note the following proposition.
\begin{prop}[The extended topological pair~$\bar{P}$]
\label{prop:index-map}
Assume that $P=(P_1,P_2)$ is an index pair for the isolated invariant
set~$S$ in an isolating set~$N$. Then the following hold for the extended
topological pair~$\bar{P}$ defined above:
\begin{eqnarray}
  && P\subset \bar{P},\label{eq:index-map-1}\\
  && F(P)=(F(P_1),F(P_2))\subset \bar{P},\label{eq:index-map-2}\\
  && \bar{P}_1\setminus\bar{P}_2 = P_1\setminus P_2.\label{eq:index-map-3}
\end{eqnarray}
\end{prop}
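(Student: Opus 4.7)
The plan is to verify each of the three displayed statements directly from the definitions, using only the index pair axioms (IP1) and (IP2). None of the three assertions looks hard; the only one that requires a small argument is the third, which relies crucially on (IP2).

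For (\ref{eq:index-map-1}), I would simply observe that $\bar P_i = P_i \cup \cl(F(P_1)\setminus N)$ is by construction a superset of $P_i$, so $P \subset \bar P$ holds trivially. For (\ref{eq:index-map-2}), I would split $F(P_i)$ according to whether points lie inside or outside~$N$: write $F(P_i) = (F(P_i) \cap N) \cup (F(P_i) \setminus N)$. By (IP1), the first piece lies in $P_i$, while the second piece is contained in $F(P_1) \setminus N \subset \cl(F(P_1) \setminus N)$ (using $P_2 \subset P_1$ in the case $i=2$). Combining gives $F(P_i) \subset P_i \cup \cl(F(P_1)\setminus N) = \bar P_i$.

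The third statement (\ref{eq:index-map-3}) is where (IP2) enters. For the inclusion $\bar P_1 \setminus \bar P_2 \subset P_1 \setminus P_2$, take $x \in \bar P_1 \setminus \bar P_2$. Since $\cl(F(P_1)\setminus N) \subset \bar P_2$, the point $x$ cannot lie in $\cl(F(P_1)\setminus N)$, hence $x \in P_1$; and $P_2 \subset \bar P_2$ forces $x \notin P_2$. For the reverse inclusion, take $x \in P_1 \setminus P_2$. Then $x \in P_1 \subset \bar P_1$; and $x \notin \bar P_2 = P_2 \cup \cl(F(P_1)\setminus N)$ because $x \notin P_2$ by hypothesis, while (IP2) gives $P_1 \cap \cl(F(P_1)\setminus N) \subset P_2$, so $x \in P_1 \setminus P_2$ prevents $x$ from lying in $\cl(F(P_1)\setminus N)$.

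I do not expect any real obstacle here; the proof is essentially a bookkeeping exercise splitting each set into its intersection with $N$ and its complement, and invoking (IP1) and (IP2) at the right moments. The only subtle point is recognizing that (IP2) is precisely the statement needed to exclude $P_1 \setminus P_2$ from the added piece $\cl(F(P_1)\setminus N)$ in the third identity.
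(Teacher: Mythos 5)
Your proof is correct and follows essentially the same route as the paper: (\ref{eq:index-map-1}) is immediate from the definition, (\ref{eq:index-map-2}) uses (IP1) to split $F(P_i)$ into the part in $N$ (which lies in $P_i$) and the part outside $N$ (which lies in $\cl(F(P_1)\setminus N)$, via $P_2\subset P_1$), and (\ref{eq:index-map-3}) hinges on (IP2) to show $P_1\setminus P_2$ is disjoint from $\cl(F(P_1)\setminus N)$. The only difference is that you argue the third identity element-wise where the paper does the same computation as a one-line set identity; the content is identical.
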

\begin{proof}
As we already mentioned, property~\eqref{eq:index-map-1} follows directly
from the definition of~$\bar{P}$. To see~\eqref{eq:index-map-2}, note that
in view of~(IP1) we have $F(P_i)\setminus P_i \subset F(P_i) \setminus N$,
and therefore $F(P_i)\subset P_i\cup (F(P_i)\setminus P_i) \subset
P_i\cup (F(P_i)\setminus N) \subset \bar{P}_i$. Finally, observe that
property~(IP2) implies
\begin{eqnarray*}
  \bar{P_1} \setminus \bar{P_2} & = &
    (P_1\cup \cl (F(P_1)\setminus N)) \setminus
    (P_2\cup \cl (F(P_1)\setminus N)) \\[0.5ex]
  & = & P_1\setminus P_2\setminus \cl (F(P_1)\setminus N)
    = P_1\setminus P_2,
\end{eqnarray*}
and this completes the proof of the proposition.
\qed
\end{proof}

\medskip
As our final result of this section, we consider the extended topological
pairs of the standard index pairs introduced in Definition~\ref{defpn}.
More precisely, we consider the situation of nested isolating sets for
the same isolated invariant set~$S$.
\begin{prop}[The extended topological pair for standard index pairs]
\label{prop:barip-M-N}
Assume that the closed sets $M\subset N$ are two isolating sets for the
same isolated invariant set~$S$. Then the inclusion $\overline{P^M} \subset
\overline{P^N}$ holds.
\end{prop}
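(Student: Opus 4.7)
The plan is to reduce the claim to a pointwise verification in~$X$ and then split on whether an image point lies in~$N$. By Proposition~\ref{prop:ip-M-N} we already have $P^M_i\subset P^N_i\subset\overline{P^N_i}$, so it suffices to establish the inclusion $\cl(F(P^M_1)\setminus M)\subset\overline{P^N_i}$ for both $i=1,2$. Since $F$ has closed values, both $P^N_i$ and $\cl(F(P^N_1)\setminus N)$ are closed, hence $\overline{P^N_i}$ is closed; thus I only need to show that every point $y\in F(P^M_1)\setminus M$ already lies in $\overline{P^N_i}$.

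Fix such a $y$ and note that $y\in F(P^M_1)\subset F(P^N_1)$. I would handle the two cases separately. If $y\notin N$, then $y\in F(P^N_1)\setminus N\subset\cl(F(P^N_1)\setminus N)\subset\overline{P^N_i}$, which settles both indices simultaneously. If instead $y\in N$, then property~(IP1) for the standard index pair~$P^N$ gives $y\in F(P^N_1)\cap N\subset P^N_1$, which already lies in $\overline{P^N_1}$, handling the case $i=1$.

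The only subtle point is the case $i=2$ with $y\in N$. Here I intend to exploit the identity $P^N_2=P^N_1\setminus S$, which follows directly from Definition~\ref{defpn} together with equation~\eqref{eq:Inv-top-prop} in Proposition~\ref{prop:Inv-top-prop}. Since $M$ isolates~$S$, condition~(IS2'), combined with the invariance $S\subset F(S)$, gives $S\subset M$; combined with $y\notin M$ this forces $y\notin S$, so $y\in P^N_1\setminus S=P^N_2\subset\overline{P^N_2}$.

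The whole argument is elementary once the case split is set up; the one place to be careful is observing that $S\subset M$ for any isolating set~$M$, which is the key ingredient making the $i=2$ case go through. No closure or separation-theoretic subtleties arise beyond using that $\overline{P^N_i}$ is closed, so I do not expect any obstacle beyond these bookkeeping steps.
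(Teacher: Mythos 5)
Your argument is correct, and it is a genuinely leaner route than the one the paper takes. You first reduce everything to points of $F(P^M_1)\setminus M$ by observing that $\overline{P^N_i}$ is closed (so the closure comes for free at the end), and then split on whether such a point $y$ lies in $N$: if $y\notin N$ it lands in $\cl(F(P^N_1)\setminus N)$, and if $y\in N$ then (IP1) for the standard pair $P^N$ gives $y\in P^N_1$, while $S\subset M$ (from (IS2') and invariance) together with $y\notin M$ and the saturation identity $P^N_2=P^N_1\setminus S$ coming from~\eqref{eq:Inv-top-prop} forces $y\in P^N_2$; this handles both $i=1$ and $i=2$ at once. The paper instead works with a point $y$ of the closure directly, chooses a witness $y'\in F(P^M_1)\setminus M$ with $y\in\cl y'$, and proves by contradiction (tracing a preimage $x\in P^M_1$ of $y'$, using saturation of $P^M$ and invoking (IS2) for $M$) that $y'$ must lie outside $N$, after which $y\in\cl(F(P^N_1)\setminus N)$. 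Your version avoids the contradiction and the auxiliary preimage entirely and uses the isolation hypothesis on $M$ only through the containment $S\subset M$; the paper's version yields the slightly stronger pointwise information that every such witness $y'$ escapes $N$ (i.e.\ the closure part of $\overline{P^M_2}$ sits inside $\cl(F(P^N_1)\setminus N)$), but that extra conclusion is not needed for the proposition. One small attribution slip: the closedness of $P^N_1$ and $P^N_2$ is not a consequence of $F$ having closed values alone but of Proposition~\ref{prop:Inv-top-prop}; since the paper establishes exactly this, it does not affect the validity of your proof.
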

\begin{proof} 
We first establish the validity of $\overline{P^M_1}\subset \overline{P^N_1}$.
Using Proposition~\ref{prop:ip-M-N} one obtains the inclusion
\begin{eqnarray*}
  \overline{P^M_1} & = &
    P^M_1\cup \cl (F(P^M_1)\setminus M) \; \subset \;
    P^N_1\cup \cl (F(P^N_1)\setminus M) \\[0.5ex]
  & = & P^N_1 \cup \cl (F(P^N_1)\setminus N) \cup \cl ((F(P^N_1)\cap N)
    \setminus M) \\[0.5ex]
  & \subset &
    P^N_1\cup \cl (F(P^N_1)\setminus N) \; = \;
    \overline{P^N_1},
\end{eqnarray*}
where the last inclusion follows from~(IP1) and the fact that~$P^N_1$
is closed.

It remains to show that $\overline{P^M_2} \subset \overline{P^N_2}$.
For this, let $y\in \overline{P^M_2}=P^M_2 \cup \cl(F(P^M_1)\setminus M)$.
If in fact we have $y\in P^M_2$, then an application of
Proposition~\ref{prop:ip-M-N} immediately implies that
$y\in P^N_2\subset \overline{P^N_2}$. Suppose therefore that we have
$y\in \cl (F(P^M_1)\setminus M)$ and $y\notin P^N_2$. Furthermore, let
$y'\in F(P^M_ 1) \setminus M$ be such that $y\in \cl y'$.

We now claim that $y' \not\in N$. To verify this, assume to the contrary
that $y'\in N$. Let $x\in P^M_1$ be such that $y'\in F(x)$. If $x\in P^M_2$,
then $y'\in F(P^M_2)\subset F(P^N_2)$, and therefore $y'\in F(P^N_2)\cap N
\subset P^N_2$ by~(IP1), as well as $y\in \cl y' \subset  \cl P^N_2=P^N_2$,
a contradiction. Thus we have to have $x\notin P^M_2$, and this yields
$x\in P^M_1\setminus P^M_2=S$ by Theorem \ref{thm:ip-existence}. Hence,
$y'\in F(S)\setminus M$ and $y\in \cl(F(S)\setminus M)$.

Since we assumed $y'\in N$, one obtains $y'\in F(P^M_1)\cap N \subset
F(P^N_1)\cap N \subset P^N_1$, and together with the closedness of~$P^N_1$
this further implies $y\in P^N_1$. This in turn shows that the inclusion
$y\in P^N_1\setminus P^N_2=S$ holds, by Theorem~\ref{thm:ip-existence}.
However, this finally furnishes $y\in S\cap \cl (F(S)\setminus M)$, which
contradicts~(IS2). Thus, we deduce that our assumption on~$y'$ was wrong
and we actually have $y'\notin N$.

With this in hand the proof of the second inclusion can easily be finished.
We now have $y'\in F(P^M_1)\setminus N \subset F(P^N_1)\setminus N$, as well
as $y\in \cl(F(P^N_1)\setminus N) \subset \overline{P^N_2}$. 
\qed
\end{proof}

\medskip
We close this section by deriving the extended topological pairs for
the index pairs in Example~\ref{ex:standardindexpairs}.
\begin{ex}[Sample extended topological pairs]
\label{ex:extendedtoppairs}
{\em
We leave it to the reader to verify that all eight standard
index pairs given in~(\ref{ex:standardindexpairs1})
and~(\ref{ex:standardindexpairs2}) are in fact equal to their
extended topological pairs as defined above. In every one of
these cases, if~$S$ is an isolated invariant set in an 
isolating set~$N$, we have both $P^{S,N}_1 = N$, as well as
either $F(N) \subset N$ or $G(N) \subset N$, respectively,
depending on which multivalued map is considered. From this
our claim follows immediately.
}
\end{ex}
%
%
%
\section{Definition of the Conley index}
\label{sec:conleyindex}
With this section, we finally turn our attention to the Conley index
for isolated invariant sets. For this, we first introduce the index
map based on an index pair in Section~\ref{subsec:indexmap}, which
transfers the action of the multivalued map~$F$ restricted to the
index pair to the algebraic level in terms of homology. Clearly, this
map will depend on the chosen index pair, and the remainder of the section
is aimed at deriving an index definition from the index map which only
depends on the isolated invariant set. Our approach relies on the notion 
of normal functor, which is introduced in Section~\ref{subsec:normalfunctor}.
Finally, Section~\ref{subsec:conleyindex} combines both notions to define
the Conley index and prove that it is well-defined. In addition, we compute
the Conley indices for the isolated invariant sets in our earlier examples.
In contrast to the previous section, we need to impose an additional
condition on the underlying multivalued map. In this section the multivalued
map $F:X\mto X$ will be assumed to be lower semicontinuous with closed and {\em acyclic\/}
values. The additional acyclicity assumption is needed in order to
obtain induced maps in homology. All the homology groups are considered
with coefficients in a fixed ring~$R$. 
%
%
\subsection{The index map}
\label{subsec:indexmap}
The basic idea of the Conley index in this paper is to lift information
from the multivalued map $F:X\mto X$ close to an isolated invariant
set~$S$ to the setting of homology. On the level of the phase space, this
is accomplished by considering the relative homology~$H_*(P) = H_*(P_1,P_2)$
of an index pair for~$S$, and on the level of the map~$F$ by the associated
{\em index map\/}~$I_P$, which is an endomorphism of~$H_*(P)$. The latter
map should of course in some way lift the dynamics of~$F$ to the
homology level.

Passing from a multivalued map to a map induced in homology is slightly
more involved than the classical map setting, and we begin by reviewing
the necessary approach. As it was already said in Section \ref{sec:prelim}, 
every lower semicontinuous map with closed values is in fact {\em strongly lower semicontinuous (slsc)\/}. Recall that a multivalued
map $F:X\mto Y$ between finite $T_0$ spaces is called strongly lower
semicontinuous, if $x\in \cl x'$ implies $F(x) \subset F(x')$. If
in addition~$F:X\mto Y$ has acyclic values, then it induces a
homomorphism $F_*:H_*(X)\to H_*(Y)$ in homology. More precisely,
in~\cite[Proposition~4.7]{barmak:etal:20a} it is shown that if~$F$ is
identified with its graph $F\subseteq X\times Y$, then the restriction
$p_1:F\to X$ of the projection onto the first coordinate is an isomorphism
in homology in every degree, and therefore one can define the induced map
in homology as $F_* = (p_2)_*(p_1)_*^{-1}$ with $p_2:F\to Y$ denoting the restriction of the other projection.  
Although the results in \cite{barmak:etal:20a} are stated only for integer coefficients, it is easy to see that the same results hold for homology with coefficients in an arbitrary ring (see \cite[Theorem~2.2]{barmak:etal:20a}).

As we mentioned earlier, the index map will be a homological version
of the action of~$F$ on a given index pair, and it is therefore not
surprising that we have to recall a few notions about maps of pairs.
A multivalued map $F:(X,A)\multimap (Y,B)$ between pairs of finite~$T_0$
spaces is a multivalued map $F:X\multimap Y$ which satisfies the inclusion
$F(a)\subseteq B$ for every $a\in A$. We say that $F:(X,A)\multimap (Y,B)$
is slsc (or with closed values, or with acyclic values) if $F:X\multimap Y$
has the respective property. Suppose that $F:(X,A) \multimap (Y,B)$ is slsc
with acyclic values. Then the restriction $F|^B_A:A\multimap B$ is also slsc
with acyclic values, and its graph is a subspace of~$F$. Since the
projections~$F\to X$ and~$F|^B_A\to A$ induce isomorphisms in homology,
by the long exact sequence of homology and the five lemma, so does the
projection of pairs $p_1:(F,F|^B_A)\to (X,A)$. Finally, in view of these
preparations we can define the homomorphims $F_*:H_*(X,A)\to H_*(Y,B)$
by letting $F_*=(p_2)_*(p_1)^{-1}_*$ as before.

Before moving on to the definition of the index map, we need the following
two auxiliary results concerning maps in homology induced by compositions.
\begin{lem}[Homology map of compositions]
\label{lemma1}
Let $F:X\multimap Y$ and $G:Z\multimap Y$ be slsc multivalued maps with
acyclic values, and suppose that $f:X\to Z$ is a continuous map such
that $Gf=F$. Then we have $G_*f_*=F_*:H_*(X)\to H_*(Y)$. The same result
holds more generally, for pairs.
\end{lem}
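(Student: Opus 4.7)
The plan is to exploit the graph description of the induced homomorphism. Given the identification of $F$ and $G$ with their graphs $F\subseteq X\times Y$ and $G\subseteq Z\times Y$, the hypothesis $Gf=F$ says precisely that for every $x\in X$ one has $F(x)=G(f(x))$. Therefore the formula $\tilde f(x,y):=(f(x),y)$ defines a set-theoretic map $\tilde f\colon F\to G$. The first step will be to verify that $\tilde f$ is continuous: since $X\times Y$ and $Z\times Y$ carry the product topologies and $f$ is continuous while the identity on $Y$ trivially is, the map $f\times \mathrm{id}_Y$ is continuous, and $\tilde f$ is its restriction to the subspace~$F$ with image in the subspace~$G$.

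Next, I would record the following commutative diagram of continuous maps, where $p_1^F,p_2^F$ and $p_1^G,p_2^G$ denote the projections from the two graphs:
\[
\begin{array}{ccccc}
X & \xleftarrow{\;p_1^F\;} & F & \xrightarrow{\;p_2^F\;} & Y \\[2pt]
\Big\downarrow f & & \Big\downarrow \tilde f & & \Big\| \\[2pt]
Z & \xleftarrow{\;p_1^G\;} & G & \xrightarrow{\;p_2^G\;} & Y .
\end{array}
\]
Commutativity of the left square is $f\circ p_1^F = p_1^G\circ\tilde f$, and commutativity of the right square is $p_2^F = p_2^G\circ \tilde f$, both of which follow at once from the definition of $\tilde f$. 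By the acyclic-valued assumption together with the result cited from~\cite[Proposition~4.7]{barmak:etal:20a}, the horizontal projections $p_1^F$ and $p_1^G$ induce isomorphisms in singular homology.

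Applying homology to the diagram and using the definition $F_*=(p_2^F)_*(p_1^F)_*^{-1}$ and $G_*=(p_2^G)_*(p_1^G)_*^{-1}$, I would then compute
\[
G_*f_* = (p_2^G)_*(p_1^G)_*^{-1} f_*(p_1^F)_*(p_1^F)_*^{-1} = (p_2^G)_*\tilde f_*(p_1^F)_*^{-1} = (p_2^F)_*(p_1^F)_*^{-1}=F_*,
\]
where the middle equality uses the left square and the third uses the right square. This is the entire argument in the absolute case.

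For the version with pairs, suppose $F\colon(X,A)\multimap(Y,B)$ and $G\colon(Z,C)\multimap(Y,B)$ are pairs of slsc multivalued maps with acyclic values, and $f\colon(X,A)\to(Z,C)$ is continuous with $Gf=F$. The construction of $\tilde f$ restricts to a continuous map of pairs $(F,F|_A^B)\to(G,G|_C^B)$, because $\tilde f(A\times Y\cap F)\subseteq C\times Y\cap G$ follows from $f(A)\subseteq C$ together with $F(a)=G(f(a))\subseteq B$. Applying the same commutative diagram now to pairs, and invoking the fact (established in Section~\ref{sec:indexpair} via the long exact sequence of a pair and the five lemma) that $p_1$ induces an isomorphism on the relative homology of the graph pair, the identical chain of equalities yields $G_*f_*=F_*$ on $H_*(X,A)\to H_*(Y,B)$. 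No step presents a real obstacle; the only point requiring care is the continuity and well-definedness of $\tilde f$ in both the absolute and relative settings, which is where the hypothesis $Gf=F$ is decisively used.
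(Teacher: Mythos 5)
Your proof is correct and follows essentially the same route as the paper: both arguments observe that $Gf=F$ makes $f\times \mathrm{id}_Y$ restrict to a continuous map between the graphs, place it in the commutative diagram with the two pairs of projections, and then conclude from the definitions $F_*=(p_2)_*(p_1)_*^{-1}$, $G_*=(p_2)_*(p_1)_*^{-1}$. You merely spell out the final homological computation and the pair case in more detail than the paper, which states that these follow "by definition" and "with the exact same proof."
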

\begin{proof}
Consider the following two commutative diagrams:
\[
\begin{diagram}
 \node{X}
 \arrow{s,l}{f}
 \node{Y}
\arrow{w,t,dd}{F}
\arrow{sw,r,dd}{G}
 \node{X}
 \arrow{s,l}{f}
 \node{F}
 \arrow{w,l}{p_1}
 \arrow{e,l}{p_2}
 \arrow{s,l}{f\times 1_Y}
 \node{Y}
 \\
 \node{Z}
 \node{}
 \node{Z}
 \node{G}
 \arrow{ne,b}{p_2}
 \arrow{w,b}{p_1}
\end{diagram}
\]
The commutativity of the first diagram implies that $f\times 1_Y:F\to G$
is well-defined, and this immediately leads to the second commutative
diagram. The result then follows by definition. For pairs we have the
exact same proof.
\qed
\end{proof}
\begin{lem}[Homology map of compositions]
\label{lemma2}
Let $F:Z\multimap X$ and $G:Z\multimap Y$ be slsc multivalued maps
with acyclic values, and let $f:Y\to X$ be a continuous map such
that $fG=F$. Then $f_*G_*=F_*:H_*(Z)\to H_*(X)$. The same result
holds more generally, for pairs.
\end{lem}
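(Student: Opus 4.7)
The plan is to mirror the argument of Lemma~\ref{lemma1}, but with the auxiliary continuous map going between the graphs in the opposite direction. Concretely, identify $F$ and $G$ with their graphs $F\subseteq Z\times X$ and $G\subseteq Z\times Y$, and let $p_1^F,p_2^F$ and $p_1^G,p_2^G$ denote the corresponding coordinate projections. Since $fG=F$, the assignment $(z,y)\mapsto(z,f(y))$ sends points of $G$ into $F$, because $y\in G(z)$ implies $f(y)\in fG(z)=F(z)$. This defines a continuous map $\phi:=1_Z\times f:G\to F$ (it is the restriction to $G$ of the product of two continuous maps).

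Next I would verify the two obvious commuting triangles
\[
  p_1^F\circ\phi = p_1^G,\qquad p_2^F\circ\phi = f\circ p_2^G,
\]
which hold strictly on the level of topological spaces. Passing to homology, the first identity reads $(p_1^F)_*\phi_* = (p_1^G)_*$, and since both $(p_1^F)_*$ and $(p_1^G)_*$ are isomorphisms (this is where the hypothesis of acyclic values enters, as already recorded in Section~\ref{sec:prelim} and used in the preceding paragraphs), we may solve for $\phi_* = (p_1^F)_*^{-1}(p_1^G)_*$.

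Substituting into the second identity then gives
\[
  f_*G_* \;=\; f_*(p_2^G)_*(p_1^G)_*^{-1} \;=\; (p_2^F)_*\phi_*(p_1^G)_*^{-1} \;=\; (p_2^F)_*(p_1^F)_*^{-1} \;=\; F_*,
\]
which is the claimed equality. This is the only real content of the proof; no obstacle is expected, since the argument is a direct diagram chase once $\phi$ has been exhibited.

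For the version with pairs, say $F:(Z,C)\multimap(X,A)$ and $G:(Z,C)\multimap(Y,B)$ together with a continuous map $f:(Y,B)\to(X,A)$ satisfying $fG=F$, the same construction applies: $\phi$ restricts to a continuous map of pairs $\phi:(G,G|^B_C)\to(F,F|^A_C)$ because $f(B)\subseteq A$, and the preceding argument goes through verbatim using the long exact sequence of the pair and the five lemma to know that the relative projections $(p_1^F)_*$ and $(p_1^G)_*$ are still isomorphisms. The only care required is in checking that $\phi$ respects the subpairs, which is immediate from $f(B)\subseteq A$ and $fG=F$.
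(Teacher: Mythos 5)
Your proposal is correct and follows essentially the same route as the paper: the paper's proof also uses the map $1_Z\times f$ restricted to the graphs, the two commuting triangles with the projections, and the definition $F_*=(p_2)_*(p_1)_*^{-1}$, with the pair case handled by the identical argument. Your write-up just makes the diagram chase explicit.
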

\begin{proof}
Similar to the last proof, consider the following commutative diagrams:
\[
\begin{diagram}
 \node{}
 \node{Y}
\arrow{sw,l,dd}{G}
 \arrow{s,r}{f}
 \node{}
 \node{G}
 \arrow{sw,l}{p_1}
 \arrow{s,r}{1_Z\times f}
 \arrow{e,l}{p_2}
 \node{Y}
 \arrow{s,r}{f}
 \\
 \node{Z}
 \node{X}
\arrow{w,b,dd}{F}
 \node{Z}
 \node{F}
 \arrow{w,b}{p_1}
 \arrow{e,b}{p_2}
 \node{X}
\end{diagram}
\]
The commutativity of the first diagram implies
that $1_Z\times f:Z\times Y \to Z\times X$ is well-defined.
This leads to the second commutative diagram, and the result
then follows by definition. For pairs we have the exact same proof.
\qed
\end{proof}

\medskip
As our last preparation we turn our attention briefly to the {\em strong
excision\/} property. For this, let~$(Y_1,Y_2)$ and~$(Z_1,Z_2)$ denote two
topological pairs of closed subspaces of a finite~$T_0$ space~$X$ such that
the inclusions $Y_i\subset Z_i$ hold for $i=1,2$, and that $Y_1\setminus
Y_2 = Z_1\setminus Z_2$. Then the inclusion $i:(Y_1,Y_2) \to (Z_1,Z_2)$
induces a homomorphism~$i_*$ between the relative homology
groups~$H_*(Y_1,Y_2)$ and~$H_*(Z_1,Z_2)$. In fact, the strong excision
property states that $i_*:H_*(Y_1,Y_2) \to H_*(Z_1,Z_2)$ is an isomorphism.
This result follows directly from the pair of McCord isomorphisms
$H_*(\kp(Y_1),\kp(Y_2))\to H_*(Y_1,Y_2)$ and $H_*(\kp(Z_1),\kp(Z_2))\to
H_*(Z_1,Z_2)$, where~$\kp$ is the functor which associates to each poset
its order complex (\cite[Corollary~1]{mccord:66a}). The hypotheses imply
that the chains in~$Y_1$ which are not in~$Y_2$ are the same as the chains
in~$Z_1$ not in~$Z_2$, and thus $i_* :C_*(\kp (Y_1),\kp(Y_2)) \to
C_*(\kp(Z_1),\kp(Z_2))$ is already an isomorphism of chain complexes,
and in particular an isomorphism in homology. 

After these preparations we can finally introduce the index map. In the rest
of the paper~$X$ will be a finite~$T_0$ topological space and $F:X\mto X$
will be lower semicontinuous with closed and acyclic values. The index map
lifts the action of the multivalued 
map~$F$ on an index pair~$P$ to the homological level. This
has to be done with care, since we usually do not have $F(P) \subseteq P$.
In fact, we will make use of the extended pair~$\bar{P}$ whose properties
where established in Proposition~\ref{prop:index-map}. More precisely,
let~$P$ be an index pair for an isolated invariant set~$S$ in the
isolating set~$N$. By applying Proposition~\ref{prop:index-map}, we
then immediately obtain both an inclusion induced isomorphism
$(\iota_P)_*: H_*(P)\to H_*(\bar{P})$ and a homomorphism
$(F_P)_*: H_*(P)\to H_*(\bar{P})$, where the latter is induced
by the multivalued map~$F_P=F|_{P_1}^{\overline{P}_1}: P\mto \overline{P}$.
This leads to the following definition.
\begin{defn}[Index map]
\label{def:indexmap}
%
%
Let~$P$ be an index pair for an isolated invariant set~$S$ in the isolating
set~$N$. Then the associated {\em index map\/} is the endomorphism
\begin{displaymath}
  I_P : H_*(P_1,P_2) \to H_*(P_1,P_2)
  \quad\mbox{ given by }\quad
  I_P:=(\iota_P)_*^{-1}\circ (F_P)_*,
\end{displaymath}
where we use the maps induced in homology by the restriction
$F_P=F|_{P_1}^{\overline{P}_1}: P\mto \overline{P}$
and the inclusion $\iota_P : P \to \bar{P}$.
%
%
\end{defn}
\begin{rem} \label{index-mapnew}
Note that the definition of~$\bar{P}$ makes sense and the conclusion of
Proposition~\ref{prop:index-map} remains true even if $P=(P_1,P_2)$ is
merely a pair of closed subspaces of~$X$, and if~$N$ is a closed subspace
of~$X$ such that $P_2\subset P_1 \subset N$ and conditions~(IP1) and~(IP2)
hold. In other words, the isolated invariant set~$S$, and the
conditions~(IS1), (IS2), and~(IP3) are not needed for the above.
Thus, the index map $I_P: H_*(P_1,P_2)\to H_*(P_1,P_2)$ can
still be defined as in Definition~\ref{def:indexmap}.
\end{rem}
%
%
\subsection{Normal functors}
\label{subsec:normalfunctor}
Next we need to recall some definitions and results from category
theory, in particular centered around the notion of normal functors.
For this, let~$\cE$ denote a category. We define the category of
endomorphisms of~$\cE$, denoted by~$\Endo(\cE)$ as follows:
\begin{itemize}
\item The objects of~$\Endo(\cE)$ are pairs~$(A,a)$, where
$A\in\cE$ and $a\in\cE(A,A)$ is an endomorphism of~$A$.
\item The set of morphisms from~$(A,a)\in \Endo(\cE$) to
$(B,b)\in \Endo(\cE)$ is the subset of~$\cE(A,B)$ consisting
of exactly those morphisms $\varphi\in\cE(A,B)$ for which
$\varphi a = b\varphi$.
\end{itemize}
We write $\varphi:(A,a)\rightarrow (B,b)$ to denote that~$\varphi$
is a morphism from~$(A,a)$ to~$(B,b)$ in~$\Endo(\cE)$. It is easy
to see that if $\varphi: (A,a)\to (B,b)$ is a morphism in~$\Endo(\cE)$
which is an isomorphism in~$\cE$, then it is also an isomorphism
in~$\Endo(\cE)$. Note that any endomorphism $a\in \cE(A,A)$ is in
particular a morphism $a:(A,a)\map (A,a)$ in~$\Endo(\cE)$. Such
morphisms of~$\Endo(\cE)$ are called \textit{induced}.

Now let $L:\Endo(\cE)\map\cC$ be a functor. We say that~$L$ is
{\em normal\/} if~$L(a)$ is an isomorphism in~$\cC$ for every
induced morphism $a:(A,a)\map (A,a)$ in~$\Endo(\cE)$. Then we have the 
following result.
\begin{prop}[Isomorphism inducing property of normal functors]
\label{prop:normal}
In the situation above, let
$L:\Endo(\cE)\map\cC$ denote a normal functor, and let
$\varphi: A\to B$ and $\psi:B\to A$ be morphisms in~$\cE$.
Then $\varphi: (A, \psi \varphi)\to (B,\varphi \psi)$ is a
morphism in the category~$\Endo(\cE)$, and~$L(\varphi)$ is
an isomorphism in~$\cC$.
\end{prop}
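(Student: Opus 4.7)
The plan is to reduce the claim to the normality hypothesis by exhibiting $L(\varphi)$ as a morphism whose two composites with $L(\psi)$ are images of induced morphisms. First I would verify the easy half: the equation $\varphi \circ (\psi\varphi) = (\varphi\psi)\circ\varphi$ holds by associativity in $\cE$, so $\varphi:(A,\psi\varphi)\to(B,\varphi\psi)$ is indeed a morphism in $\Endo(\cE)$. Symmetrically, $\psi\circ(\varphi\psi)=(\psi\varphi)\circ\psi$ shows that $\psi:(B,\varphi\psi)\to(A,\psi\varphi)$ is also a morphism in $\Endo(\cE)$.

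Next I would observe that the composites of these two morphisms are precisely the induced morphisms on each object: $\psi\circ\varphi:(A,\psi\varphi)\to(A,\psi\varphi)$ and $\varphi\circ\psi:(B,\varphi\psi)\to(B,\varphi\psi)$. By normality of $L$, both $L(\psi\varphi)$ and $L(\varphi\psi)$ are isomorphisms in $\cC$. Applying functoriality of $L$ to the factorizations $\psi\varphi$ and $\varphi\psi$ in $\Endo(\cE)$ yields the identities $L(\psi)\circ L(\varphi) = L(\psi\varphi)$ and $L(\varphi)\circ L(\psi) = L(\varphi\psi)$ in $\cC$.

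Finally I would invoke the standard categorical observation that any morphism admitting both a left and a right inverse is an isomorphism: the equation $L(\psi)\circ L(\varphi) = L(\psi\varphi)$ shows that $L(\psi\varphi)^{-1}\circ L(\psi)$ is a left inverse of $L(\varphi)$, while $L(\varphi)\circ L(\psi) = L(\varphi\psi)$ shows that $L(\psi)\circ L(\varphi\psi)^{-1}$ is a right inverse. Since $L(\varphi)$ has inverses on both sides, it is itself an isomorphism in $\cC$, completing the argument.

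There is no real obstacle here; the proposition is essentially a formal consequence of normality combined with functoriality, and its significance is purely as a packaging lemma, to be applied later with $\varphi$ and $\psi$ chosen as the inclusion-induced and excision-induced maps used in comparing index pairs via the index map endomorphism.
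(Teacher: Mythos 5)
Your proposal is correct and follows essentially the same route as the paper: both verify the intertwining identities by associativity, observe that the composites $\psi\varphi$ and $\varphi\psi$ are induced morphisms whose $L$-images are isomorphisms by normality, and conclude that $L(\varphi)$ has a left and a right inverse, hence is an isomorphism. The paper merely packages the functoriality step as a commutative diagram rather than the explicit equations $L(\psi)L(\varphi)=L(\psi\varphi)$ and $L(\varphi)L(\psi)=L(\varphi\psi)$.
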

\begin{proof}
Clearly we have that $\varphi$ is a morphism from $(A, \psi \varphi)$ to $(B, \varphi \psi)$ in $\Endo(\cE)$
and $\psi$ is a morphism from $(B, \varphi \psi)$ to $(A, \psi \varphi)$.
In addition, one obtains the commutative diagram
\[
\begin{diagram}
 \node{(B, \varphi \psi)}
 \arrow{s,l}{\psi}
 \arrow{e,t}{\varphi \psi}
 \node{(B, \varphi \psi)}
 \arrow{s,r}{\psi}
 \\
 \node{(A,\psi \varphi)}
 \arrow{e,t}{\psi \varphi}
 \arrow{ne,t}{\varphi}
 \node{(A,\psi \varphi)}
\end{diagram}
\]
If we now apply the functor~$L$ to this diagram, then the horizontal
morphisms become isomorphisms in~$\cC$. Thus, the image~$L(\varphi)$ has
both a left and a right inverse, and therefore it is also an isomorphism.
\qed
\end{proof}

\medskip
We would like to point out that if~$W$ denotes the class of induced
morphisms in~$\Endo(\cE)$, then the natural functor $\Endo(\cE)\to
\Endo(\cE)[W^{-1}]$ to the localization is universal in the sense
that any other normal functor $\Endo(\cE)\to \cC$ factorizes through
it, see also~\cite{szymczak:95a}. We close this section
with one specific example of a normal functor. For further examples
we refer the reader to the paper~\cite{mrozek:92a}.
\begin{ex}[The Leray functor]
\label{ex:lerayfunctor}
{\em
For the example computations of this paper, we make use of the 
specific normal functor introduced in~\cite{mrozek:90a}, the
{\it Leray functor\/}. For this, let~$\Mod$ denote the category
of graded moduli over the ring~$R$ together with homomorphisms of
degree zero. Using the setting for the definition of normal functors
from above, we consider the categories
\begin{displaymath}
  \cE = \Mod
  \quad\mbox{ and }\quad
  \cC = \Auto(\Mod) \; ,
\end{displaymath}
where $\Auto(\Mod) \subset \Endo(\Mod)$ is the subcategory of
automorphisms of~$\Mod$. Then the {\it Leray functor\/} 
$\Leray : \Endo(\Mod) \map \Auto(\Mod)$ can be defined as 
the composition of the following maps:
\begin{itemize}
\item Let $(H,h) \in \Endo(\Mod)$ be arbitrary. Then the {\it generalized
kernel\/} of~$h$ can be defined as
\begin{displaymath}
  \mathrm{gker}(h) := \bigcup_{n \in \NN} h^{-n}(0) \; ,
\end{displaymath}
and one can easily see that the map~$h : H \to H$ induces a well-defined
map $h' : H/\mathrm{gker}(h) \to H/\mathrm{gker}(h)$. Thus, the definition
\begin{displaymath}
  L'(H,h) := (H/\mathrm{gker}(h), h') \in \Mono(\Mod) \subset \Endo(\Mod)
\end{displaymath}
gives an object in the category~$\Mono(\Mod)$ of monomorphisms of~$\Mod$.
Furthermore, it is straightforward to define~$L'(\phi)$ also for morphisms~$\phi$
in~$\Endo(\Mod)$, and to show that in this way one obtains a well-defined
contravariant functor $L' : \Endo(\Mod) \to \Mono(\Mod)$.
\item Now let $(H,h) \in \Mono(\Mod)$ be arbitrary. Then the {\it generalized
image\/} of~$h$ can be defined as
\begin{displaymath}
  \mathrm{gim}(h) := \bigcap_{n \in \NN} h^n(H) \; ,
\end{displaymath}
and it is not difficult to verify that the map~$h : H \to H$ induces a well-defined
map $h'' : \mathrm{gim}(h) \to \mathrm{gim}(h)$. Thus, the definition
\begin{displaymath}
  L''(H,h) := (\mathrm{gim}(h), h'') \in \Auto(\Mod) \subset \Endo(\Mod)
\end{displaymath}
gives an object in the category~$\Auto(\Mod)$ of automorphisms of~$\Mod$.
In addition, it is again straightforward to define~$L''(\phi)$ also for
morphisms~$\phi$ in~$\Mono(\Mod)$, and to show that this time one obtains
a well-defined contravariant functor $L'' : \Mono(\Mod) \to \Auto(\Mod)$.
\item Finally, the Leray functor is defined as $\Leray := L'' \circ L'$.
\end{itemize}
For more details on the above construction, as well as the proof that
the Leray functor is indeed a normal functor, we refer the reader
to~\cite[Section~4]{mrozek:90a}. For our applications below, we note
that by the construction of~$\Leray$ we have the implication
\begin{equation} \label{eqn:lerayid}
  (H,h) \in \Auto(\Mod) \subset \Endo(\Mod)
  \quad\Longrightarrow\quad
  \Leray(H,h) = (H,h) \; ,
\end{equation}
i.e., the Leray functor is the identity on~$\Auto(\Mod) \subset
\Endo(\Mod)$. This fact will enable us to determine the Conley 
index of isolated invariant sets in many situations.
}
\end{ex}
%
%
%
\subsection{The Conley index}
\label{subsec:conleyindex}
After these preparations we can finally define the Conley index. A first
attempt would be to use the index map $I_P : H_*(P) \to H_*(P)$ introduced
in Definition~\ref{def:indexmap}. Unfortunately, however, this would mean
that the index depends on the chosen index pair of the isolated
invariant set.

This issue can be addressed by using the concept of normal functors
from the last section. More precisely, let~$\Mod$ denote as before the category
of graded moduli over the ring~$R$ and let $L:\Endo(\Mod)\to\Auto(\Mod)$
be a fixed normal functor.  Note that if~$P$ is an index pair for an
isolated invariant set~$S$ in an isolating set~$N$, then one obtains
$(H_*(P),I_P) \in \Endo(\Mod)$. Thus, the \textit{$L$-reduction} $L(H_*(P),I_P)$ is an automorphism of a graded module over~$R$, and we have the
following crucial result.
\begin{thm}[Well-definedness of the Conley index]
\label{th_ind}
In the situation described above, the isomorphism type of $L(H_*(P),I_P)
\in \Auto(\Mod)$ does not depend on the choice of the isolating set~$N$
for the isolated invariant set~$S$, or on the chosen index pair~$P$ in~$N$.
\end{thm}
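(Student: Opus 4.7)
The proof plan splits the assertion into two independent claims: independence of the index pair within a fixed isolating set, and independence of the isolating set itself. For the first claim, given two index pairs $P$ and $Q$ for $S$ in $N$, Lemma~\ref{inter1} shows that $P\cap Q$ is also an index pair, so via the two inclusions $P\cap Q\subseteq P$ and $P\cap Q\subseteq Q$ it suffices to treat the case of a nested pair $P\subseteq Q$. A further reduction using Lemma~\ref{inter2} inserts the intermediate pairs $(P_1,P_1\cap Q_2)$ and $(P_1\cup Q_2,Q_2)$ and permits me to assume that $P\subseteq Q$ share either a common first or a common second coordinate. Lemma~\ref{lem:seq-Qi} then interpolates between them by a sequence of index pairs $P=Q^n\subseteq\cdots\subseteq Q^0=Q$ whose consecutive terms satisfy the $F$-shift condition $F(Q^k_i)\cap N\subseteq Q^{k+1}_i$ while preserving the coordinate coincidence.

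The heart of the argument is the individual step: given index pairs $P'\subseteq Q'$ in $N$ with $F(Q'_i)\cap N\subseteq P'_i$ and either $P'_1=Q'_1$ or $P'_2=Q'_2$, I would show $L(H_*(P'),I_{P'})\cong L(H_*(Q'),I_{Q'})$. Introduce the auxiliary pair $\hat{P'}_i:=P'_i\cup\cl(F(Q'_1)\setminus N)$, which sits between $\overline{P'}$ and $\overline{Q'}$ and receives the restricted multivalued map $F_{Q',\hat{P'}}:Q'\mto\hat{P'}$. Property~(IP2) applied to $P'$ when $P'_1=Q'_1$, or to $Q'$ combined with $P'_2=Q'_2$ in the other case, yields $P'_1\cap\cl(F(Q'_1)\setminus N)\subseteq P'_2$, from which a direct computation gives $\hat{P'}_1\setminus\hat{P'}_2=P'_1\setminus P'_2$. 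Strong excision then furnishes an isomorphism $\alpha:H_*(P')\to H_*(\hat{P'})$.

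Next I would define the two comparison morphisms $\varphi:=j_*:H_*(P')\to H_*(Q')$, where $j:P'\hookrightarrow Q'$ is the inclusion, and $\psi:=\alpha^{-1}\circ (F_{Q',\hat{P'}})_*:H_*(Q')\to H_*(P')$. A short sequence of commutative diagrams involving the inclusions $P'\hookrightarrow\overline{P'}\hookrightarrow\hat{P'}\hookrightarrow\overline{Q'}$ and $Q'\hookrightarrow\overline{Q'}$, together with Lemmas~\ref{lemma1} and~\ref{lemma2} applied to the various restrictions of $F$, delivers the identities $\psi\varphi=I_{P'}$ and $\varphi\psi=I_{Q'}$. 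Proposition~\ref{prop:normal} then yields the desired isomorphism $L(\varphi):L(H_*(P'),I_{P'})\to L(H_*(Q'),I_{Q'})$. Concatenating these isomorphisms along the interpolating sequence completes the first claim. For the second claim, given two isolating sets, Proposition~\ref{prop:iso-set-intersection} reduces the argument to the case $M\subseteq N$; I would then use the inclusions $P^M\subseteq P^N$ and $\overline{P^M}\subseteq\overline{P^N}$ provided by Propositions~\ref{prop:ip-M-N} and~\ref{prop:barip-M-N} to run essentially the same morphism construction, this time between the standard index pairs in their respective isolating sets (with index maps defined as in Remark~\ref{index-mapnew}), and once more invoke Proposition~\ref{prop:normal}.

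The main obstacle I anticipate is the verification $P'_1\cap\cl(F(Q'_1)\setminus N)\subseteq P'_2$ and its analogue in the isolating-set comparison. This is the point at which the poor separation of finite topological spaces becomes most visible, since the closure of a set lying outside $N$ can re-enter $N$ through the specialization order; the shared-coordinate hypothesis is precisely what allows (IP2) applied to the correct pair to absorb these extra points. Once the excision isomorphism $\alpha$ is in hand, the diagram chase producing $\psi\varphi=I_{P'}$ and $\varphi\psi=I_{Q'}$ is routine but lengthy, and draws on the full complement of compositional results from Section~\ref{subsec:indexmap}.
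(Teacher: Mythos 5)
Your single-step argument is essentially the paper's core step and is correct: the auxiliary pair $\hat{P'}_i=P'_i\cup\cl(F(Q'_1)\setminus N)$ is exactly the pair the paper calls $D$, the inclusion $P'\hookrightarrow\hat{P'}$ is a strong excision precisely because $P'_1\cap\cl(F(Q'_1)\setminus N)\subset P'_2$ (which indeed follows from (IP2) for $P'$ or for $Q'$ according to which coordinates coincide), and your $\varphi,\psi$ together with Proposition~\ref{prop:normal} reproduce the paper's argument. The genuine gap is in the reduction: inserting $R=(P_1,P_1\cap Q_2)$ and $T=(P_1\cup Q_2,Q_2)$ does \emph{not} ``permit you to assume'' a shared coordinate. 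What you obtain is the chain $P\subset R\subset T\subset Q$, where $P,R$ share the first coordinate and $T,Q$ share the second, but the middle link $R\subset T$ shares neither; moreover, applying Lemma~\ref{inter2} to $R\subset T$ just returns $R$ and $T$ again, and Lemma~\ref{lem:seq-Qi} is unavailable for that link, so your machinery never connects $R$ to $T$. This link requires a separate (short) argument: since $R_1\setminus R_2=P_1\setminus Q_2=T_1\setminus T_2$, the inclusion $j\colon R\to T$ is a strong excision, and because $\overline{R}\subset\overline{T}$ one checks $j_*I_R=I_Tj_*$ using Lemmas~\ref{lemma1} and~\ref{lemma2}; hence $j_*$ is an isomorphism in $\Endo(\Mod)$ and $L(j_*)$ is an isomorphism, with no appeal to normality. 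Without this step your chain of isomorphisms does not close.

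The isolating-set comparison also cannot be run by ``essentially the same morphism construction.'' To build the backward morphism $\psi$ there you would need $F$ to carry $P^N$ into an auxiliary pair assembled over $P^M$, i.e.\ something like $F(P^N_i)\cap M\subset P^M_i$, and this fails in general: a point of $F\bigl(\Inv^-(N,S)\bigr)\cap M$ need not be reachable from $S$ by a path staying inside $M$, so no analogue of your condition is available and Proposition~\ref{prop:normal} has nothing to act on. The correct (and simpler) route is the one the inclusions you cite are designed for: Propositions~\ref{prop:ip-M-N} and~\ref{prop:barip-M-N} give $j_*I_{P^M}=I_{P^N}j_*$, and since both standard pairs are saturated by Theorem~\ref{thm:ip-existence}, one has $P^M_1\setminus P^M_2=S=P^N_1\setminus P^N_2$, so $j\colon P^M\to P^N$ is a strong excision and $j_*$ is already an isomorphism commuting with the index maps; applying $L$ finishes this step directly. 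Your proposal omits the saturation/excision point, which is the crux of the isolating-set independence.
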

\begin{proof}
To begin, let~$M$ and~$N$ be two isolating sets for~$S$, and let~$P$ and~$Q$
denote two index pairs in~$N$ and~$M$, respectively. Our goal is to establish
the equivalence $L(H_*(P),I_P) \cong L(H_*(Q),I_Q)$. This is accomplished in
five steps.

{\em Step 1.} We first consider the special case
\begin{itemize} \setlength{\itemsep}{3pt}
\item[(i)] $M=N$,
\item[(ii)] $P\subset Q$,
\item[(iii)] $P_1=Q_1$ or $P_2=Q_2$,
\item[(iv)] $F(Q)\cap N\subset P$.
\end{itemize}
Let $D=(D_1,D_2)$ be the pair of closed sets defined by $D_i=P_i\cup
\cl(F(Q_1)\setminus N)$ for~$i=1,2$. By~(iv) we may treat~$F$ as a map
of pairs $F_{QD} = F|_Q^{D}: Q\mto D$. In view of~(i) and~(ii), we also
have $\overline{P}\subset D\subset \overline{Q}$. This gives the
following commutative diagram 
\[
\begin{diagram}
 \node{P}
 \arrow[2]{s,l}{j}
 \node{\overline{P}}
\arrow{w,t,dd}{F_P}
 \arrow{s,r}{k}
 \node{P}
 \arrow{w,t}{\iota_P}
 \arrow[2]{s,r}{j}
 \\
\node{}
\node{D}
\arrow{sw,l,dd}{F_{QD}}
\arrow{s,r}{l}
\node{}
\\
 \node{Q}
 \node{\overline{Q}}
\arrow{w,t,dd}{F_Q}
 \node{Q}
 \arrow{w,t}{\iota_Q}
\end{diagram}
\]
in which vertical arrows denote inclusions. Since $F$ induces a
map $F_{PD}=F|_P^D$, by Lemmas~\ref{lemma1} and~\ref{lemma2} we
have $k_*(F_P)_*=(F_{PD})_*=(F_{QD})_*j_*$ and $l_*(F_{QD})_*=(F_Q)_*$.
We then obtain a commutative diagram
\[
\begin{diagram}
\dgARROWLENGTH=1.8em
 \node{H_*(P)}
 \arrow[2]{s,l}{j_*}
 \arrow{e,t}{(F_P)_*}
 \node{H_*(\overline{P})}
 \arrow{s,r}{k_*}
 \node{H_*(P)}
 \arrow{w,t}{(\iota_P)_*}
 \arrow[2]{s,r}{j_*}
 \\
\node{}
\node{H_*(D)}
\arrow{s,r}{l_*}
\node{}
\\
 \node{H_*(Q)}
 \arrow{e,t}{(F_Q)_*}
 \arrow{ne,l}{(F_{QD})_*}
 \node{H_*(\overline{Q})}
 \node{H_*(Q)}
 \arrow{w,t}{(\iota_Q)_*}
\end{diagram}
\]
%
%
We claim that $k$ induces an isomorphism in homology. Indeed, if $P_1=Q_1$, $k$ is the identity. Otherwise, by (iii) we have $P_2=Q_2$. In this case we claim that $k$ fulfills the hypothesis of strong excision, namely, $$P_1\setminus (P_2 \cup \cl(F(P_1)\setminus N))= P_1\setminus (P_2 \cup \cl(F(Q_1)\setminus N)).$$
Inclusion of the second subspace in the first is trivial, and their difference is $$P_1\cap \cl(F(Q_1)\setminus N) \setminus (P_2 \cup \cl(F(P_1)\setminus N)) \subset Q_1\cap \cl(F(Q_1)\setminus N) \setminus P_2,$$ which is equal to $Q_1\cap \cl(F(Q_1)\setminus N) \setminus Q_2$, and this is empty by (IP2).    

If one defines~$I_{QP}:=(\iota_P)_*^{-1}k_*^{-1} (F_{QD})_*$,
then we get the commutative diagram in~$\Mod$ given by
\[
\begin{diagram}
 \node{H_*(P)}
 \arrow{s,l}{j_*}
 \arrow{e,t}{I_P}
 \node{H_*(P)}
 \arrow{s,r}{j_*}
 \\
 \node{H_*(Q)}
 \arrow{e,t}{I_Q}
 \arrow{ne,l}{I_{QP}}
 \node{H_*(Q)}
\end{diagram}\quad,
\]
and $L(j_*):L(H_*(P), I_{QP}j_*)=L(H_*(P), I_P)\to L(H_*(Q), j_*I_{QP})=L(H_*(Q),I_Q)$
is an isomorphism in view of Proposition~\ref{prop:normal}.

{\em Step 2.} Next we drop assumption~(iv). According to Lemma~\ref{lem:seq-Qi}
we can find a sequence $Q^0,Q^1, \ldots ,Q^n$ of index pairs such that $Q^0=Q$ and
$Q^n=P$, and such that each pair $(Q^{k+1}$, $Q^k)$ satisfies assumptions (i)--(iv). Due to {\em Step~1\/} the $L$-reductions $L(H_*(Q^{k},I_{Q^k}))$ and $L(H_*(Q^{k+1},I_{Q^{k+1}}))$ are isomorphic, and the conclusion follows.

{\em Step 3}. We now drop assumptions~(iii) and~(iv). For this, notice
that in view of Lemma~\ref{inter2} the pairs $R=(P_1,P_1\cap Q_2)$ and
$T=(P_1\cup Q_2,Q_2)$ are index pairs. 

The pairs~$P$ and~$R$ satisfy assumptions~(ii) and~(iii), and therefore they
have isomorphic $L$-reductions. The same holds for~$T$ and~$Q$. On the other
hand, the inclusion $j:R \hookrightarrow T$ induces an isomorphism
$j_*:H_*(R)\to H_*(T)$ by strong excision. Since $R\subset T$, we have an
inclusion $\overline{j}:\overline{R}\hookrightarrow \overline{T}$, as well
as the commutative diagram
\[
\begin{diagram}
 \node{H_*(R)}
 \arrow{s,l}{j_*}
 \arrow{e,t}{(F_R)_*}
 \node{H_*(\overline{R})}
 \arrow{s,r}{\overline{j}_*}
 \node{H_*(R)}
 \arrow{w,t}{(\iota_R)_*}
 \arrow{s,r}{j_*}
 \\
 \node{H_*(T)}
 \arrow{e,t}{(F_T)_*}
 \node{H_*(\overline{T})}
 \node{H_*(T)}
 \arrow{w,t}{(\iota_T)_*}
\end{diagram}
\]
Thus, one obtains $j_*I_R = j_*(\iota_R)_*^{-1}(F_R)_* = 
(\iota_T)_*^{-1}\overline{j}_*(F_R)_* = (\iota_T)_*^{-1}(F_T)_*j_* =
I_Tj_*$. This shows that $j_*\in \Endo(\Mod)((H_*(R),I_R),(H_*(T),I_T))$,
and since~$j_*$ is an isomorphism in~$\Mod$, it also is an isomorphism
in~$\Endo(\Mod)$. This in turn implies that $L(j_*):L(H_*(R),I_R)\to
L(H_*(T),I_T)$ is an isomorphism, and that~$P$ and~$Q$ indeed have
isomorphic $L$-reductions.

{\em Step 4}. Now we only assume~(i). By Lemma~\ref{inter1} the pair
$P\cap Q$ is an index pair. Hence, the claim follows from {\em Step 3}
applied to~$P\cap Q\subset P$ and~$P\cap Q\subset Q$.

{\em Step 5}. Finally, we drop all auxiliary assumptions. We have already
proved that the isomorphism type of the $L$-reduction depends only on the
isolating set for~$S$. Moreover, since by Proposition~\ref{prop:iso-set-intersection},
the intersection of two isolating sets is again an isolating set, we
may assume $M\subset N$. 

Consider the index pairs~$P^M$ for~$S$ in~$M$ and~$P^N$ for~$S$ in~$N$.
In view of Proposition~\ref{prop:ip-M-N} and Proposition~\ref{prop:barip-M-N}
we then have the commutative diagram
\[
\begin{diagram}
\node{P^M}
\arrow{s,l}{j}
 \node{\overline{P^M}}
\arrow{w,t,dd}{F_{P^M}}
 \arrow{s,r}{k}
 \node{P^M}
 \arrow{w,t}{\iota_{P^M}}
 \arrow{s,r}{j}
 \\
 \node{P^N}
 \node{\overline{P^N}}
\arrow{w,t,dd}{F_{P^N}} 
\node{P^N}
 \arrow{w,t}{\iota_{P^N}}
\end{diagram}
\]
in which vertical arrows denote inclusions. Then $I_{P^N}j_*=j_* I_{P^M}$,
which implies that $j_*:(H_*(P^M), I_{P^M})\to (H_*(P^N), I_{P^N})$ is a
morphism in~$\Endo(\Mod)$. On the other hand, since~$P^M$ and~$P^N$ are
saturated by Theorem~\ref{thm:ip-existence}, strong excision shows that
$j_*:H_*(P^M)\to H_*(P^N)$ is an isomorphism in~$\Mod$. Thus, the map~$j_*$
is an isomorphism in~$\Endo(\Mod)$, and then so is~$L(j_*)$.
\qed
\end{proof}

\medskip
Based on the above result, the Conley index can now be defined as follows.
We would like to point out that the functor~$L$ in the definition could
be, for example, the computationally convenient Leray functor of Example \ref{ex:lerayfunctor}.
\begin{defn}[The Conley index]
\label{def:con}
The $L$-reduction $L(H_*(P),I_P)$ will be called the {\em homological
Conley index of~$S$}, and be denoted by~$C(S,F)$, or simply~$C(S)$
if~$F$ is clear from context. Due to Theorem~\ref{th_ind} the Conley
index $C(S)\in \Auto (\Mod)$ is well-defined up to isomorphism.
\end{defn}
In order to illustrate the above abstract definition of the Conley index,
we now briefly return to our earlier two examples and determine the
Conley indices of all the Morse sets shown in Figures~\ref{fig:simplecvf}
and~\ref{fig:reflectedcvf}.
\begin{ex}[Sample Conley index computations]
\label{ex:conleyindices}
{\em
We return one last time to the two simple multivalued maps~$F : X \mto X$
and~$G : X \mto X$ from Examples~\ref{ex:simplecvf} and~\ref{ex:reflectedcvf},
respectively. We have already seen that these maps give rise to associated
Morse decompositions with three and five isolated invariant sets, which
themselves are subsets of the finite topological space $X = \{ A, B, C,
AB, AC, BC, ABC \}$. Notice that in view of Example~\ref{ex:extendedtoppairs}
in all of these cases the extended topological pair~$\overline{P}$ equals the
index pair~$P$ that was chosen for each isolated invariant set. Thus, the
index map~$I_P$ is simply given by $I_P = (F_P)_* : H_*(P) \to H_*(P)$ for
the sets in~(\ref{ex:standardindexpairs1}), and similarly for the isolated
invariant sets in~(\ref{ex:standardindexpairs2}).

Consider now the multivalued map~$F : X \mto X$ from Example~\ref{ex:simplecvf}.
For the sake of simplicity, we compute the Conley index for the ring $R = \ZZ$
and with respect to the Leray functor. Then for the isolated invariant
set~$S_1 = \{ A, B, C \}$ one can easily see that $H_0(P^{S_1,N_1}) \simeq \ZZ^3$.
Moreover, the index map~$I_{P^{S_1,N_1}}$ maps the generators in a cyclic fashion,
i.e., it is an automorphism. Based on~(\ref{eqn:lerayid}), this shows that the
Conley index with respect to~$\Leray$ is just $(H_*(P^{S_1,N_1}), I_{P^{S_1,N_1}})$.
In a similar way, one can determine the Conley index for all the isolated invariant
sets in Figure~\ref{fig:simplecvf} as
\begin{displaymath}
  \begin{array}{lclcl}
    \DS S_1 = \{ A, B, C \} & : &
      \DS H_0(P^{S_1,N_1}) \simeq \ZZ^3 & \mbox{ with } &
      \DS I_{P^{S_1,N_1}} (e_i) = e_{(i+1) \;\mathrm{mod}\; 3}, \\
    \DS S_2 = \{ AB, BC, AC \} & : &
      \DS H_1(P^{S_2,N_2}) \simeq \ZZ^3 & \mbox{ with } &
      \DS I_{P^{S_2,N_2}} (e_i) = e_{(i+1) \;\mathrm{mod}\; 3}, \\
    \DS S_3 = \{ ABC \} & : &
      \DS H_2(P^{S_3,N_3}) \simeq \ZZ & \mbox{ with } &
      \DS I_{P^{S_3,N_3}} (e_i) = e_i,
  \end{array}
\end{displaymath}
where in each case all unlisted homology groups are trivial, and the listed
group~$\ZZ^k$ has a suitable basis~$\{ e_0, e_1, \ldots, e_{k-1} \}$. Similarly,
for the multivalued map~$G$ from Example~\ref{ex:reflectedcvf} and the
isolated invariant sets in Figure~\ref{fig:reflectedcvf} one obtains
\begin{displaymath}
  \begin{array}{lclcl}
    \DS R_1 = \{ A \} & : &
      \DS H_0(P^{R_1,M_1}) \simeq \ZZ & \mbox{ with } &
      \DS I_{P^{R_1,M_1}} (e_i) = e_i, \\
    \DS R_2 = \{ B, C \} & : &
      \DS H_0(P^{R_2,M_2}) \simeq \ZZ^2 & \mbox{ with } &
      \DS I_{P^{R_2,M_2}} (e_i) = e_{(i+1) \;\mathrm{mod}\; 2}, \\
    \DS R_3 = \{ BC \} & : &
      \DS H_1(P^{R_3,M_3}) \simeq \ZZ & \mbox{ with } &
      \DS I_{P^{R_3,M_3}} (e_i) = -e_i, \\
    \DS R_4 = \{ AB, AC \} & : &
      \DS H_1(P^{R_4,M_4}) \simeq \ZZ^2 & \mbox{ with } &
      \DS I_{P^{R_4,M_4}} (e_i) = e_{(i+1) \;\mathrm{mod}\; 2}, \\
    \DS R_5 = \{ ABC \} & : &
      \DS H_2(P^{R_5,M_5}) \simeq \ZZ & \mbox{ with } &
      \DS I_{P^{R_5,M_5}} (e_i) = -e_i,
  \end{array}
\end{displaymath}
where we use the same conventions as above. We leave the details of
these straightforward computations to the reader.
}
\end{ex}
%
%
%
\section{Properties of the Conley index}
\label{sec:conleyindexprop}
In this section, we present first properties of the Conley index for
multivalued maps defined in the last section. In addition to the
Wa\.zewski property, we also briefly address continuation.
%
%
\subsection{The Wa\.zewski property}
In classical Conley theory, the Wa\.zewski property is central, as it
allows one to deduce the existence of a nontrivial isolated invariant
set~$S$ from a nontrivial index, and the latter can be computed from an
index pair without explicit knowledge of~$S$.

In order to show that the same result still holds in the multivalued
context of the present paper, let $P=(P_1,P_2)$ denote a topological
pair of closed subspaces of~$X$. Suppose further that $N=P_1$ satisfies
conditions~(IP1) and~(IP2), i.e., we have the inclusion
$P_1\cap (\cl(F(P_1)\setminus P_1) \cup F(P_2))\subset P_2$.
In view of Remark~\ref{index-mapnew}, the index map $I_P:H_*(P)\to H_*(P)$
is defined in this situation. Then we have the following result.
\begin{prop}[Wa\.zewski property]
Suppose that~$X$ is a finite~$T_0$ topological space and that the
multivalued map $F:X\mto X$ is lower semicontinuous with closed and
acyclic values. Moreover, let $P=(P_1,P_2)$ be a pair of closed subspaces
of~$X$ such that
\begin{displaymath}
  P_1 \cap \left( \cl \left( F(P_1) \setminus P_1 \right)
  \cup F(P_2) \right) \; \subset \; P_2 .
\end{displaymath}
If one further has $L(H_*(P),I_P) \neq 0 \in \Auto(\Mod)$,
then $\Inv(P_1\setminus P_2)\neq \emptyset$. 
\end{prop}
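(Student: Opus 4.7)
The plan is to prove the contrapositive: assuming $\Inv(P_1 \setminus P_2) = \emptyset$, I would show that $L(H_*(P), I_P) = 0$ in $\Auto(\Mod)$. The key observation making this short is that under this assumption the given pair $P$, together with $N := P_1$, becomes a genuine index pair for the \emph{empty} isolated invariant set, so the well-definedness theorem (Theorem~\ref{th_ind}) lets the computation be transferred to a trivially empty standard index pair. In other words, the Wa\.zewski principle in this finite topological setting is essentially a bookkeeping corollary of the invariance result of Section~\ref{sec:conleyindex}, rather than requiring any separate deformation argument.

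First I would verify that $S := \emptyset$ is an isolated invariant set with $N := P_1$ as an isolating set; both (IS1) and (IS2') are vacuous since $S$ has no points. Then I would check that $P$ is an index pair for $S$ in $N$: condition (IP1) for $i=1$ is trivial because $N = P_1$, while (IP1) for $i=2$ and (IP2) together are precisely the two consequences of the standing hypothesis $P_1 \cap (\cl(F(P_1) \setminus P_1) \cup F(P_2)) \subset P_2$; condition (IP3) reads $\emptyset = \Inv(P_1 \setminus P_2)$, which is the contrapositive assumption itself. Applying Theorem~\ref{th_ind} then yields $L(H_*(P), I_P) \cong L(H_*(P^N), I_{P^N})$ for the standard index pair $P^N$ attached to $\emptyset$ in $N$. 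From Definition~\ref{defpn} I would note that $P^N_1 = \Inv^-(N, \emptyset) = \emptyset$, since no path can have its left endpoint in the empty set, and hence $P^N_2 = \emptyset$ as well, so $H_*(P^N) = 0$ and $(H_*(P^N), I_{P^N})$ is the zero object of $\Endo(\Mod)$.

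The remaining step is the categorical identity $L(0, 0) = 0$ in $\Auto(\Mod)$, which I expect to be the only potentially delicate point in the write-up. For the Leray functor of Example~\ref{ex:lerayfunctor} this is transparent: one has $\mathrm{gker}(0) = A$ for the zero endomorphism of any graded module $A$, so $L'(A, 0) = (0, 0)$, and consequently $\Leray(A, 0) = (0, 0)$. For a general normal functor this identity would be added as a natural convention on $L$, or derived from its universal factorization through the Szymczak localization, in which the zero object is preserved. Combining with Theorem~\ref{th_ind}, one obtains $L(H_*(P), I_P) \cong 0$, which contradicts the hypothesis and closes the contrapositive.
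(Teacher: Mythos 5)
Your proposal is correct and follows essentially the same route as the paper: under $\Inv(P_1\setminus P_2)=\emptyset$ the pair $P$ is an index pair for the empty isolated invariant set in $N=P_1$, and Theorem~\ref{th_ind} transfers the computation to a trivial index pair with vanishing homology, forcing $L(H_*(P),I_P)=0$. The only cosmetic difference is that you compute with the standard index pair $P^N=(\emptyset,\emptyset)$ in $N=P_1$ while the paper switches to the empty isolating set $N'=\emptyset$ with $P'=(\emptyset,\emptyset)$, and you make explicit the (paper-implicit) fact that $L$ sends the zero object to zero.
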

\begin{proof}
Suppose $\Inv(P_1\setminus P_2)=\emptyset$. Then $N=P_1$ is an isolating
set for the invariant set $S=\emptyset$, and~$P$ is an index pair for~$S$
in~$N$. According to our hypothesis, we have $C(S)\neq 0$. But this is
absurd since~$S$ admits $N'=\emptyset$ as isolating set and
$P'=(\emptyset,\emptyset)$ is an index pair for~$S$ in~$N'$.
Thus, we have the equality $H_*(P')=0$, as well as
$C(S)=L(H_*(P'),I_{P'})=0$.
\qed
\end{proof}
%
%
\subsection{Homotopies and continuation}
\label{sec:continuation}
As our second property of the Conley index we address the fundamental
concept of continuation. For this, we first need to review some results
on homotopies in finite topological spaces.

Let~$X$ and~$Y$ be two finite~$T_0$ spaces. Two lower semicontinuous
multivalued maps $F,G:X\multimap Y$ with closed and acyclic values are
called \textit{homotopic} if there exists a lower semicontinuous map
$H:X\times [0,1] \multimap Y$ with closed and acyclic values such that
$H(x,0)=F(x)$ and $H(x,1)=G(x)$ for every $x\in X$. This definition
extends in a natural way to maps $(X,A)\multimap (Y,B)$ between pairs
of finite~$T_0$ spaces by requiring that $H:X\times [0,1]\to Y$
maps~$(a,t)$ to $H(a,t)\subset B$ for every $a\in A$ and $t\in [0,1]$. 

General homotopies in the setting of finite topological spaces can be
more succinctly described as follows. Define an order on the set of all
lower semicontinuous multivalued maps $X \multimap Y$ with closed and
acyclic values by letting~$F\le G$ if we have $F(x)\subset G(x)$ for all
$x\in X$. A sequence $F=F_0\le F_1 \ge F_2\le \ldots F_k=G$ is called a
\textit{fence} from~$F$ to~$G$. Then the proof of the following result
is essentially the same as the proof of~\cite[Proposition~8.1]{barmak:etal:20a},
and therefore we omit it.
\begin{prop}[Homotopy characterization via fences]
Let~$X$ and~$Y$ be two finite~$T_0$ spaces and let $F,G:X\multimap Y$
be two lower semicontinuous multivalued maps with closed and acyclic
values. Then the maps~$F$ and~$G$ are homotopic if and only if there
exists a fence $F=F_0\le F_1\ge F_2\le \ldots F_k=G$ of lower
semicontinuous multivalued maps $X\multimap Y$ with closed and
acyclic values.

Furthermore, if the maps~$F,G:(X,A)\multimap (Y,B)$
are maps of pairs of finite~$T_0$ spaces, then they are homotopic
if and only if there exists a fence as above in which the maps are
maps of pairs $(X,A)\multimap (Y,B)$.
\end{prop}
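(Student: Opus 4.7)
The proof proceeds in two directions. For the backward direction (fence implies homotopy), by concatenation of homotopies on $[0,\tfrac{1}{2}]$ and $[\tfrac{1}{2},1]$ it suffices to handle a single comparable pair $F \le G$. I would define $H : X \times [0,1] \multimap Y$ by $H(x,0) := F(x)$ and $H(x,t) := G(x)$ for $t > 0$; closed and acyclic values are inherited from $F$ and $G$. To verify lower semicontinuity, fix a closed $C \subseteq Y$; since $F \le G$ gives $G^{-1}(C) \subseteq F^{-1}(C)$, a point $(x_0,t_0)$ with $H(x_0,t_0) \not\subseteq C$ admits the open product-neighborhood $(X \setminus F^{-1}(C)) \times [0,1]$ when $t_0 = 0$ (here $F(x) \not\subseteq C$ forces $G(x) \supseteq F(x) \not\subseteq C$) and $(X \setminus G^{-1}(C)) \times (0,1]$ when $t_0 > 0$, both of which lie entirely in the complement of $H^{-1}(C)$.

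For the forward direction (homotopy implies fence), the key local observation is that at each $t^* \in [0,1]$ the slice $H_{t^*}$ is dominated pointwise by nearby slices. Precisely, applying the lsc of $H$ to the closed sets $Y \setminus \opn\{y\}$ for $y \in Y$ shows that for each $x \in X$ and each $y \in H(x,t^*)$ the set $\{t \in [0,1] : y \in H(x,t)\}$ is open and contains $t^*$. Since $X$ and $Y$ are finite, intersecting over the finitely many pairs $(x,y)$ with $y \in H(x,t^*)$ yields an open interval $I_{t^*} \ni t^*$ on which $H_{t^*} \le H_t$ (pointwise) for every $t \in I_{t^*}$. By compactness of $[0,1]$ one extracts a finite subcover $I_{t^*_1}, \ldots, I_{t^*_k}$, and selecting intermediate points $s_i \in I_{t^*_i} \cap I_{t^*_{i+1}}$ produces the zigzag
\begin{displaymath}
  F = H_0 \;\ge\; H_{t^*_1} \;\le\; H_{s_1} \;\ge\; H_{t^*_2} \;\le\; H_{s_2} \;\ge\; \cdots \;\le\; H_1 = G,
\end{displaymath}
which, after prepending the trivial comparison $F \le F$ if needed, becomes a fence in the required form. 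Each intermediate map is a slice of $H$ at a single parameter and therefore inherits lower semicontinuity and closed, acyclic values.

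Both constructions automatically respect the pair structure: in the backward direction, the explicit $H$ sends $(a,t)$ into $B$ because $F(a), G(a) \subseteq B$; in the forward direction, every slice of a pair-homotopy is itself a pair map, so the refinement argument produces a fence of pair maps. The main obstacle is the forward direction, specifically making rigorous the local domination claim $H_{t^*} \le H_t$ for $t$ near $t^*$, which requires a careful use of the interplay between the Alexandrov topology on $X$ and the standard topology on $[0,1]$ in the product. This is essentially the argument of Proposition~8.1 of~\cite{barmak:etal:20a}, and the only extra point to verify in the multivalued setting is that acyclicity of slice values is preserved---which is automatic, since the slices inherit the closed and acyclic values assumption from $H$ itself.
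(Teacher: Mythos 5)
Your proposal is correct and takes essentially the route the paper relies on (it omits the proof, deferring to Proposition~8.1 of~\cite{barmak:etal:20a}): the explicit two-level homotopy $H(x,0)=F(x)$, $H(x,t)=G(x)$ for $t>0$ in the comparable case, and for the converse the local domination $H_{t^*}\le H_t$ for $t$ near $t^*$ combined with compactness of $[0,1]$, with slices inheriting lower semicontinuity and closed, acyclic values. The only step worth spelling out is that the openness of $\{t\in[0,1] : y\in H(x,t)\}$ uses the closed values of $H$ in addition to lower semicontinuity: lsc applied to $Y\setminus\opn\{y\}$ gives openness of $\{(x,t) : H(x,t)\cap\opn\{y\}\neq\emptyset\}$, and closedness of $H(x,t)$ upgrades ``meets $\opn\{y\}$'' to ``contains $y$''.
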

In terms of the associated maps in homology we have the following
result, which is in the spirit of~\cite[Corollary~8.2]{barmak:etal:20a}.
\begin{lem}[Homotopic maps induce the same map in homology]
\label{comparable}
Let~$X,Y$ be finite~$T_0$ spaces, and let $F,G:X\multimap Y$ be two
homotopic lower semicontinuous multivalued maps with closed and acyclic
values. Then $F_*=G_*:H_*(X)\to H_*(Y)$ for the maps induced in homology.
The same result holds more generally for pairs.
\end{lem}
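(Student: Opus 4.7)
The plan is to reduce the statement to the case of a single comparable pair $F\le G$ using the fence characterization from the preceding proposition, and then exploit the diagrammatic definition of $F_*$ in terms of the graph projections.

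First, by the previous proposition, any two homotopic maps $F,G$ are connected by a fence $F=F_0\le F_1\ge F_2\le\cdots\le F_k=G$ of lower semicontinuous multivalued maps with closed and acyclic values. Hence, by an obvious induction along the fence, it suffices to prove the lemma under the additional assumption that $F\le G$, i.e., that $F(x)\subset G(x)$ for every $x\in X$. In this case the graph of $F$ is a subspace of the graph of $G$, and we have a genuine inclusion of topological spaces $i:F\hookrightarrow G$.

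Next, I would observe that the two projections commute with this inclusion: writing $p_1^F,p_2^F$ and $p_1^G,p_2^G$ for the projections from the graphs of $F$ and $G$ respectively, one has $p_1^G\circ i=p_1^F$ and $p_2^G\circ i=p_2^F$. Since both $F$ and $G$ have closed and acyclic values, the results recalled in Section~\ref{sec:prelim} (and the construction used to define $F_*,G_*$) tell us that $(p_1^F)_*$ and $(p_1^G)_*$ are isomorphisms. From $(p_1^G)_*\circ i_*=(p_1^F)_*$ we deduce that $i_*$ is itself an isomorphism, and more importantly that $i_*=(p_1^G)_*^{-1}\circ(p_1^F)_*$. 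Composing with $(p_2^G)_*$ on the left and using $p_2^G\circ i=p_2^F$ gives
\[
(p_2^G)_*\circ(p_1^G)_*^{-1}\circ(p_1^F)_* \;=\; (p_2^G)_*\circ i_* \;=\; (p_2^F)_*,
\]
that is, $G_*\circ(p_1^F)_*=F_*\circ(p_1^F)_*$. Since $(p_1^F)_*$ is invertible, we conclude $F_*=G_*$, which is exactly the claim.

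For the relative case, the exact same argument applies to the pairs of graphs $(F,F|_A^B)\hookrightarrow(G,G|_A^B)$; the projections onto the first coordinate still induce isomorphisms in relative homology by the long exact sequence of the pair together with the five lemma (as already noted in Section~\ref{subsec:indexmap} for the definition of $F_*$ on pairs), and the displayed computation carries over verbatim. I do not foresee a serious obstacle here: the genuine content is the fence reduction, which is handled by the previous proposition, and the rest is pure diagram chasing using that the projection from the graph induces a homology isomorphism whenever the multivalued map has acyclic values.
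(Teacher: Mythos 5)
Your proposal is correct and follows essentially the same route as the paper: reduce to the comparable case $F\le G$ via the fence characterization, include the graph of $F$ into the graph of $G$, and use that the first-coordinate projections induce homology isomorphisms to conclude $F_*=G_*$, with the relative case handled identically via the long exact sequence and five lemma. The only cosmetic difference is that you spell out the induction along the fence, which the paper compresses into ``we may assume $F\le G$.''
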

\begin{proof}
We may assume $F\le G$. Consider the following commutative diagram
\[
\begin{diagram}
\node{}
\node{F}
\arrow[2]{s,r}{j}
\arrow{sw,l}{p_1}
\arrow{se,l}{p_2}
\node{}
\\
\node{X}
\node{}
\node{Y}
\\
\node{}
\node{G}
\arrow{nw,r}{\widetilde{p}_1}
\arrow{ne,r}{\widetilde{p}_2}
\node{}
\end{diagram}
\]
in which~$j$ denotes the inclusion between the graphs, and the other maps
are the projections to the first or second coordinate. Since~$p_1$
and~$\widetilde{p}_1$ induce isomorphisms in homology, so does~$j$.
This immediately implies
\begin{displaymath}
  G_*=(\widetilde{p}_2)_*(\widetilde{p}_1)_*^{-1}=
  (p_2)_*(j_*)^{-1}j_*(p_1)^{-1}_*=F_*
  \; : \; H_*(X)\to H_*(Y) .
\end{displaymath}
The result for pairs follows with the exact same proof.
\qed
\end{proof}

\medskip
The following definition introduces the notion of {\em continuation\/} for the
setting of multivalued maps in finite topological spaces.
\begin{defn}[Continuation of isolated invariant sets]
\label{def:continuation}
%
%
Let~$X$ be a finite~$T_0$ space and let $F,G:X\multimap X$ be two lower
semicontinuous multivalued maps with closed and acyclic values such that
$F\le G$ or $F\ge G$. Moreover, let $S_F,S_G\subset X$ be isolated
invariant sets for~$F$ and~$G$, respectively. We say that~$(S_F,F)$
and~$(S_G,G)$ (or just~$S_F$ and~$S_G$) are {\em related by an elementary
continuation} if there exist isolating sets~$N_F$ and~$N_G$ for~$S_F$
and~$S_G$ with respect to~$F$ and~$G$, respectively, as well as a pair
$P=(P_1,P_2)$ which is both
\begin{itemize}
\item an index pair for~$S_F$ in~$N_F$ with respect to~$F$, and
\item an index pair for~$S_G$ in~$N_G$ with respect to~$G$.
\end{itemize}
More generally, let $F,G:X\multimap X$ denote two homotopic lower
semicontinuous multivalued maps with closed and acyclic values. We
say that isolated invariant sets~$S_F$ and~$S_G$ for~$F$ and~$G$,
respectively, are {\em related by continuation}, if there exists
a fence $F=F_0\le F_1\ge F_2\le \ldots F_k=G$ of lower semicontinuous
multivalued maps $X\multimap X$ with closed and acyclic values, as
well as isolated invariant sets~$S_i$ for~$F_i$, for $0\le i\le k$,
such that $S_0=S_F$, $S_k=S_G$, and $(S_i,F_i)$, $(S_{i+1},F_{i+1})$
are related by an elementary continuation for each $0\le i <k$.
%
\end{defn}
As in the classical case, we then have the following central result.
\begin{prop}[Continuation]
\label{prop:continuation}
Let $F,G:X\multimap X$ be homotopic lower semicontinuous multivalued
maps with closed and acyclic values, and let~$S_F$ and~$S_G$ be 
isolated invariant sets for~$F$ and~$G$, respectively, which are
related by continuation. Then the Conley index~$C(S_F,F)$ is
isomorphic to the Conley index~$C(S_G,G)$.
\end{prop}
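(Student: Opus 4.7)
The plan is to reduce to a single elementary continuation by induction on the fence length and, in that reduced setting, to show that the two index maps in question actually coincide as endomorphisms of a common homology module; the $L$-reductions will then trivially agree.

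By the definition of continuation, the maps $F$ and $G$ are linked by a fence $F=F_0\leq F_1\geq F_2\leq\cdots\leq F_k=G$ together with isolated invariant sets $S_i$ for $F_i$, with consecutive pairs $(S_i,F_i)$, $(S_{i+1},F_{i+1})$ related by elementary continuation. A straightforward induction on $k$ reduces everything to a single elementary step, so I would henceforth assume that there is a pair $P=(P_1,P_2)$ and isolating sets $N_F,N_G$ such that $P$ is simultaneously an index pair for $S_F$ in $N_F$ with respect to $F$ and for $S_G$ in $N_G$ with respect to $G$, with $F\leq G$ (the case $F\geq G$ being entirely symmetric, as the argument below is symmetric in $F$ and $G$). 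By Theorem~\ref{th_ind}, the Conley indices may be computed as $C(S_F,F)=L(H_*(P),I_P^F)$ and $C(S_G,G)=L(H_*(P),I_P^G)$, where the two index maps are built from the respective extended pairs $\bar{P}^F$ and $\bar{P}^G$ attached to $(F,N_F)$ and $(G,N_G)$. The goal thus becomes establishing the equality $I_P^F = I_P^G$ of endomorphisms of $H_*(P)$.

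The key construction is a single enclosing pair that dominates both $\bar{P}^F$ and $\bar{P}^G$ at once. I would define
\[
  \tilde P_i \;:=\; P_i\cup \cl(F(P_1)\setminus N_F)\cup \cl(G(P_1)\setminus N_G), \qquad i=1,2.
\]
Using properties (IP1) and (IP2) of $P$ relative to both $(F,N_F)$ and $(G,N_G)$, one checks, exactly as in Proposition~\ref{prop:index-map}, that $\tilde P_1\setminus \tilde P_2 = P_1\setminus P_2 = \bar{P}^F_1\setminus \bar{P}^F_2 = \bar{P}^G_1\setminus \bar{P}^G_2$. Strong excision then implies that every inclusion in $P\hookrightarrow \bar{P}^F\hookrightarrow \tilde P$ and in $P\hookrightarrow \bar{P}^G\hookrightarrow \tilde P$ induces an isomorphism in homology, and the two resulting composite isomorphisms $\tilde j_*\colon H_*(P)\to H_*(\tilde P)$ coincide. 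A further application of (IP1) shows that both $F_P$ and $G_P$ are well-defined lower semicontinuous maps of pairs $\tilde F_P,\tilde G_P\colon P\multimap \tilde P$ with closed acyclic values.

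The finishing step exploits $F\leq G$: the pointwise inclusion $\tilde F_P(x)\subseteq \tilde G_P(x)$ gives a fence of length one, so Lemma~\ref{comparable} forces $(\tilde F_P)_* = (\tilde G_P)_*$ as homomorphisms $H_*(P)\to H_*(\tilde P)$. Factoring $\tilde F_P$ and $\tilde G_P$ through the inclusions $\bar{P}^F\hookrightarrow \tilde P$ and $\bar{P}^G\hookrightarrow \tilde P$ and invoking Lemma~\ref{lemma2}, a short diagram chase yields
\[
  \tilde j_*\,I_P^F \;=\; (\tilde F_P)_* \;=\; (\tilde G_P)_* \;=\; \tilde j_*\,I_P^G,
\]
and cancelling the isomorphism $\tilde j_*$ gives $I_P^F = I_P^G$, hence $L(H_*(P),I_P^F) = L(H_*(P),I_P^G)$. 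The main obstacle I anticipate is the careful bookkeeping needed to verify that $\tilde P$ genuinely satisfies the strong excision hypothesis with respect to $P$, which requires a simultaneous use of (IP2) for both index-pair structures; once this is settled, the homotopy invariance of Lemma~\ref{comparable} and the ensuing diagram chase are essentially formal.
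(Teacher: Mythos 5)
Your argument is correct and follows essentially the same route as the paper: reduce to a single elementary continuation with $F\le G$, compare $(F_P)_*$ and $(G_P)_*$ inside a common enclosing extended pair using Lemma~\ref{comparable} and Lemma~\ref{lemma2}, and cancel the excision isomorphism to get $I_{P,F}=I_{P,G}$. The only cosmetic difference is that you build the symmetric union $\tilde P$, whereas the paper notes via (IP1) that $\cl(F(P_1)\setminus N_F)=\cl(F(P_1)\setminus P_1)\subset\cl(G(P_1)\setminus P_1)=\cl(G(P_1)\setminus N_G)$, so that $\overline{P}^F\subset\overline{P}^G$ and one can take $\tilde P=\overline{P}^G$ directly; under $F\le G$ your $\tilde P$ coincides with $\overline{P}^G$ anyway.
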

\begin{proof}
We can assume without loss of generality that $F\le G$, and that~$S_F$
and~$S_G$ are related by an elementary continuation. Let~$N_F$, $N_G$,
and~$P$ be as in Definition~\ref{def:continuation}. Since we have
$F\le G$, one obtains the inclusion
\begin{eqnarray*}
  \overline{P_i}^F & = &
    P_i\cup \cl(F(P_1)\setminus N_F) \; = \;
    P_i\cup \cl(F(P_1)\setminus P_1) \\[0.5ex]
  & \subset &
    P_i\cup \cl(G(P_1)\setminus P_1) \; = \;
    P_i\cup \cl(G(P_1)\setminus N_G) \; = \;
    \overline{P_i}^G.
\end{eqnarray*}
Thus we have a (non-commutative) diagram
\[
\begin{diagram}
\node{}
\node{\overline{P}^F}
\arrow{sw,t,dd}{F_{P}}
\arrow[2]{s,r}{j}
\node{}
\\
\node{P}
\node{}
\node{P}
\arrow{nw,l}{\iota_{P,F}}
\arrow{sw,r}{\iota_{P,G}}
\\
\node{}
\node{\overline{P}^G}
\arrow{nw,r,dd}{G_P}
\node{}
\end{diagram}
\]
in which~$j$ denotes inclusion. According to Lemma~\ref{lemma2}
one has $j_*(F_P)_*=(jF_P)_*$ as a map from~$H_*(P)$ to~$H_*(\overline{P}^G)$.
Moreover, our assumption $F \le G$ immediately implies $jF_P\le G_P$, and 
therefore Lemma~\ref{comparable} yields $(jF_P)_*=(G_P)_*$. Since the right
triangle is in fact commutative, the map~$j_*$ is an isomorphism. Thus the
index map~$I_{P,F}$ of~$P$ with respect to~$F$ is given by
\begin{displaymath}
  (\iota_{P,F})^{-1}_* (F_P)_* \; = \;
  (\iota_{P,F})^{-1}_*j_*^{-1} j_*(F_P)_* \; = \;
  (\iota_{P,G})_*^{-1}(G_P)_* \; = \;
  I_{P,G},
\end{displaymath}
and this furnishes in particular $L(H_*(P), I_{P,F})=L(H_*(P), I_{P,G})$.
In other words, the Conley indices~$C(S_F,F)$ and~$C(S_G,G)$ are isomorphic.
\qed
\end{proof}

\medskip
To close this section, we present a detailed example which illustrates the 
concept of continuation, and also provides further insight into isolated
invariant sets and their Conley indices.
\begin{figure} \centering
  \setlength{\unitlength}{1 cm}
  \begin{picture}(12.5,4.0)
    \put(0.0,0.0){
      \includegraphics[height=4.0cm]{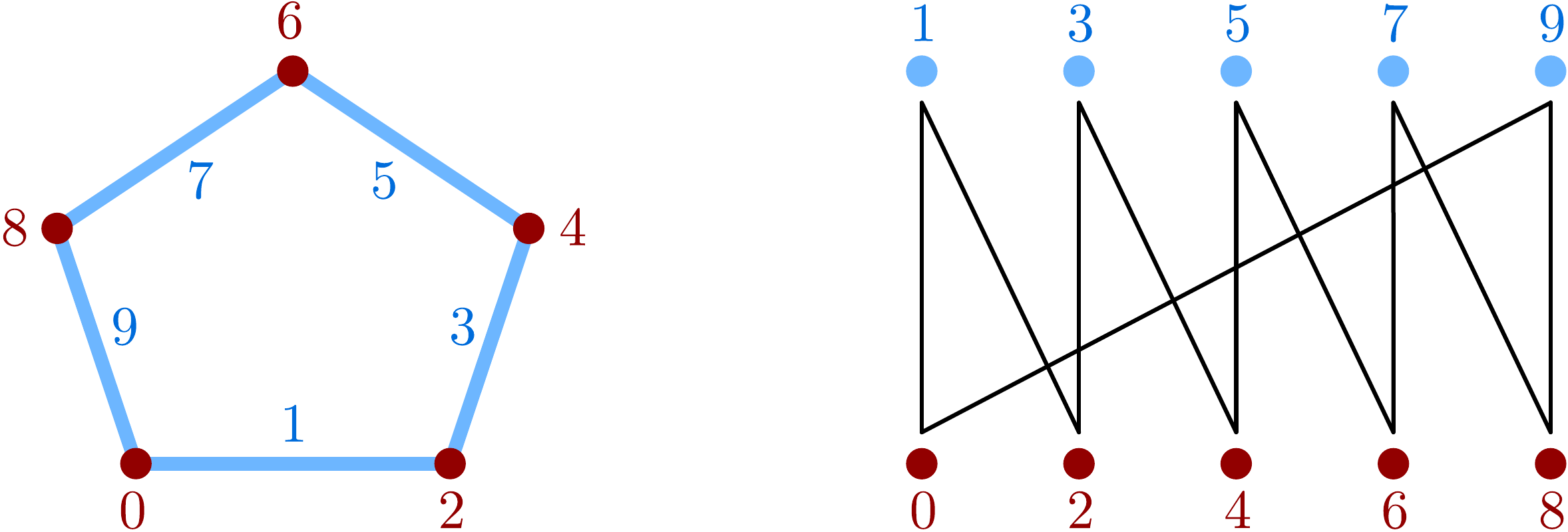}} 
  \end{picture}
  \caption{The finite $T_0$ topological space~$X$ used in Example~\ref{ex:continuation}.
           The left panel shows a simplicial complex in the form of a pentagon,
           given by five vertices and five edges. Using the order given by the
           face relationship, one obtains the ten-point finite topological
           space~$X$, which is shown in the right panel via its poset
           representation.}
  \label{fig:continuationX}
\end{figure}%
\begin{ex}[Continuation of isolated invariant sets]
\label{ex:continuation}
{\em
For this example, we let~$X$ denote the finite topological space which
is generated by a simplicial representation of a pentagon, as shown
in Figure~\ref{fig:continuationX}. Using the Alexandrov topology induced
by the face relation, one obtains the ten-point topological space~$X$
indicated in the right panel of the figure as a poset. Note that we
can identify~$X$ with the set~$\ZZ_{10}$, where the topology is given
as in the poset.
\begin{figure} \centering
  \setlength{\unitlength}{1 cm}
  \begin{picture}(12.0,7.0)
    \put(0.0,0.0){
      \includegraphics[height=7.0cm]{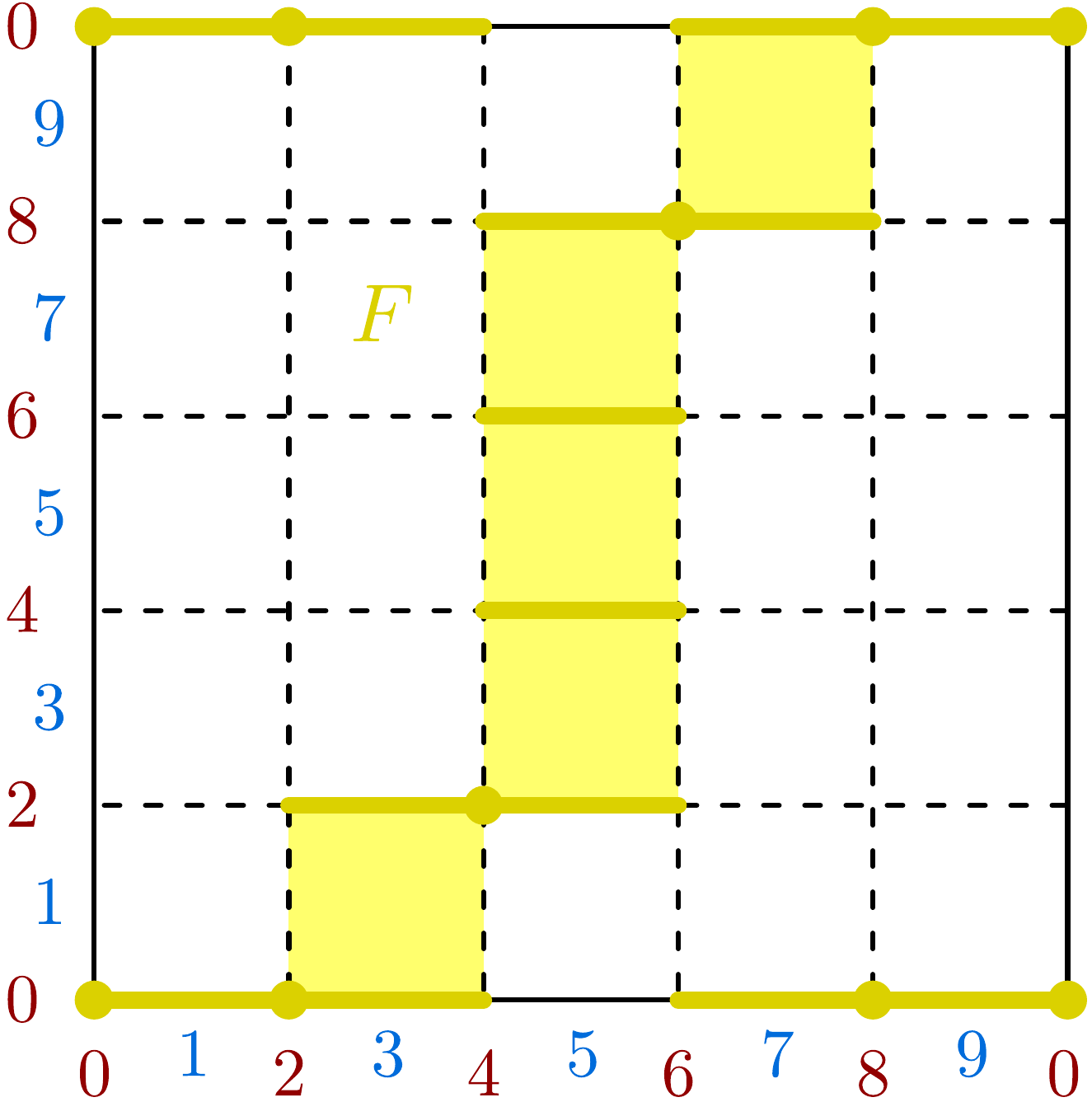}}  
    \put(8.5,1.0){\makebox(3.5,5.5){
      \begin{tabular}{|c||c|} \hline
        $x$ & $F(x)$ \\[0.5ex] \hline
        0 & 0 \\
        1 & 0 \\
        2 & 0 \\
        3 & 0 1 2 \\
        4 & 2 \\
        5 & 2 3 4 5 6 7 8 \\
        6 & 8 \\
        7 & 0 8 9 \\
        8 & 0 \\
        9 & 0 \\ \hline
      \end{tabular}
      }} 
  \end{picture}
  \caption{Definition of the multivalued map $F : X \mto X$. The
           left image shows the graph of~$F$. For this, we represent
           the pentagon from Figure~\ref{fig:continuationX} as a line
           segment, whose end points are identified. The table on the
           right lists all function values~$F(x)$.}
  \label{fig:continuationF}
\end{figure}%
\begin{figure} \centering
  \setlength{\unitlength}{1 cm}
  \begin{picture}(12.0,7.0)
    \put(0.0,0.0){
      \includegraphics[height=7.0cm]{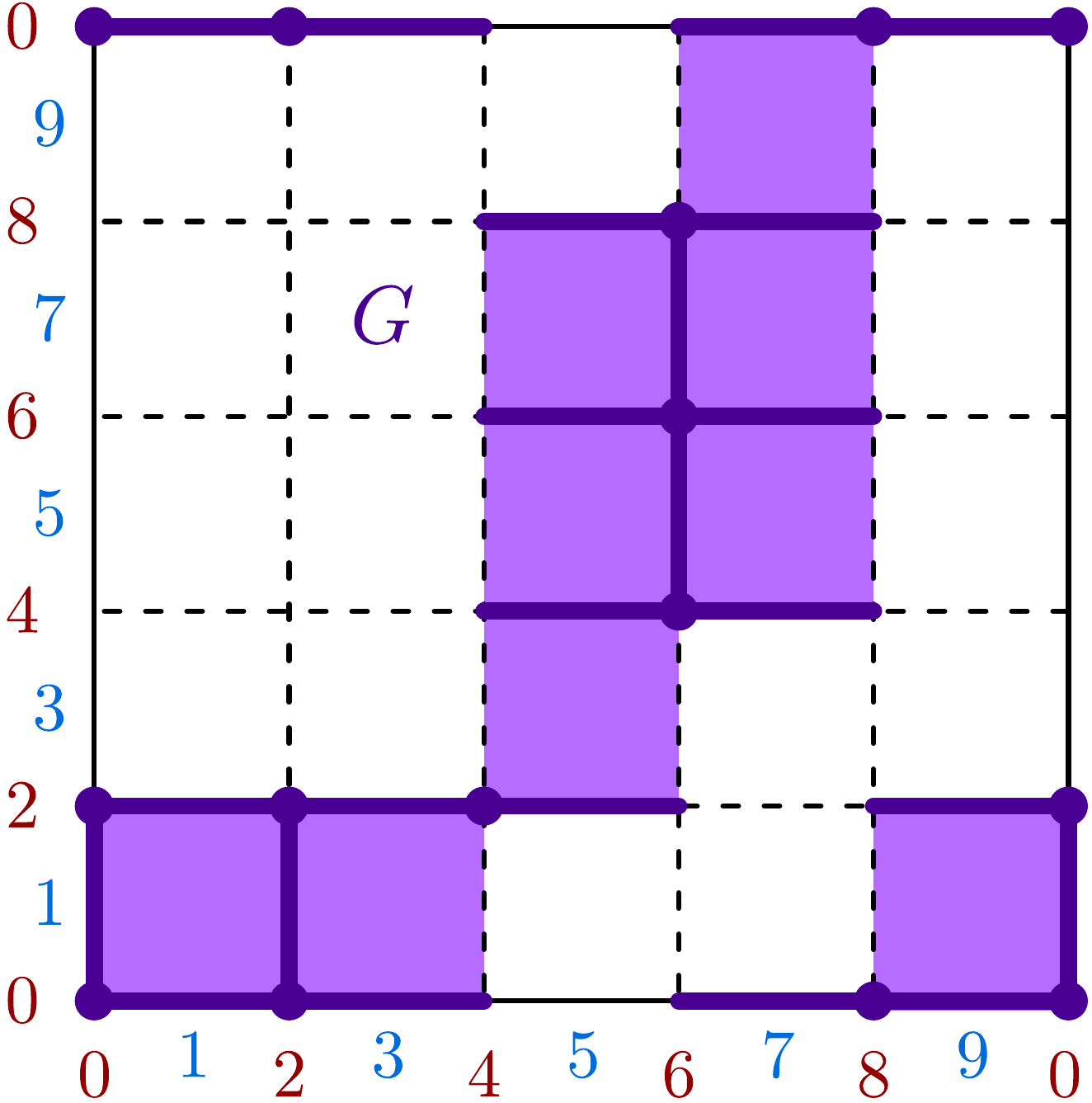}}  
    \put(8.5,1.0){\makebox(3.5,5.5){
      \begin{tabular}{|c||c|} \hline
        $x$ & $G(x)$ \\[0.5ex] \hline
        0 & 0 1 2 \\
        1 & 0 1 2 \\
        2 & 0 1 2 \\
        3 & 0 1 2 \\
        4 & 2 \\
        5 & 2 3 4 5 6 7 8 \\
        6 & 4 5 6 7 8 \\
        7 & 0 4 5 6 7 8 9 \\
        8 & 0 \\
        9 & 0 1 2 \\ \hline
      \end{tabular}
      }} 
  \end{picture}
  \caption{Definition of the multivalued map $G : X \mto X$. The
           left image shows the graph of~$G$. As before, the pentagon
           from Figure~\ref{fig:continuationX} is represented by a line
           segment with identified end points. The table on the
           right lists all function values~$G(x)$.}
  \label{fig:continuationG}
\end{figure}%

On the topological space~$X$, we consider the two multivalued maps
$F : X \mto X$ and $G : X \mto X$ which are defined in the tables in
Figures~\ref{fig:continuationF} and~\ref{fig:continuationG},
respectively. In addition, these two figures show the graphs of these
maps, where we represent the pentagon from Figure~\ref{fig:continuationX}
as a line segment, whose end points correspond to~$0$ and are identified.
Both maps are lower semicontinuous and have closed and acyclic values. In
addition, one can easily see that both maps give rise to a Morse decomposition
with two isolated invariant sets, namely
\begin{displaymath}
  \begin{array}{lcll}
    \DS S_F = \{ 0 \} & \quad\mbox{ and }\quad &
      \DS R_F = \{ 5 \} & \quad\mbox{ for~$F$, and} \\[0.6ex]
    \DS S_G = \{ 0, 1, 2 \} & \quad\mbox{ and }\quad &
      \DS R_G = \{ 5, 6, 7 \} & \quad\mbox{ for~$G$.}
  \end{array}
\end{displaymath}
We claim that the isolated invariant sets~$(S_F,F)$ and~$(S_G,G)$ are related
by an elementary continuation. For this, we use the isolating sets~$N_F
= N_G = \{ 0,1,2 \}$, as well as the topological pair~$P = (P_1,P_2)$ with
$P_1=\{0,1,2\}$ and $P_2=\emptyset$. Then one can easily see that~$P$ is
an index pair for~$S_F$ in~$N_F$ with respect to~$F$, as well as for~$S_G$
in~$N_G$ with respect to~$G$. In addition, the definitions of~$F$ and~$G$
immediately imply $F \le G$, which furnishes our claim. Thus, in view of
Proposition~\ref{prop:continuation} the Conley indices~$C(S_F,F)$
and~$C(S_G,G)$ are isomorphic. We leave it to the reader to verify that
the only nontrivial homology group occurs in dimension zero, that it is
one-dimensional, and that the index map is the identity. In other words,
both isolated invariant sets have the Conley index of an attracting fixed
point. We note that also~$(R_F,F)$ and~$(R_G,G)$ are related by an
elementary continuation, but leave the verification of this and the index
computation to the reader.
\begin{figure} \centering
  \setlength{\unitlength}{1 cm}
  \begin{picture}(14.5,9.5)
    \put(0.0,5.0){
      \includegraphics[height=4.5cm]{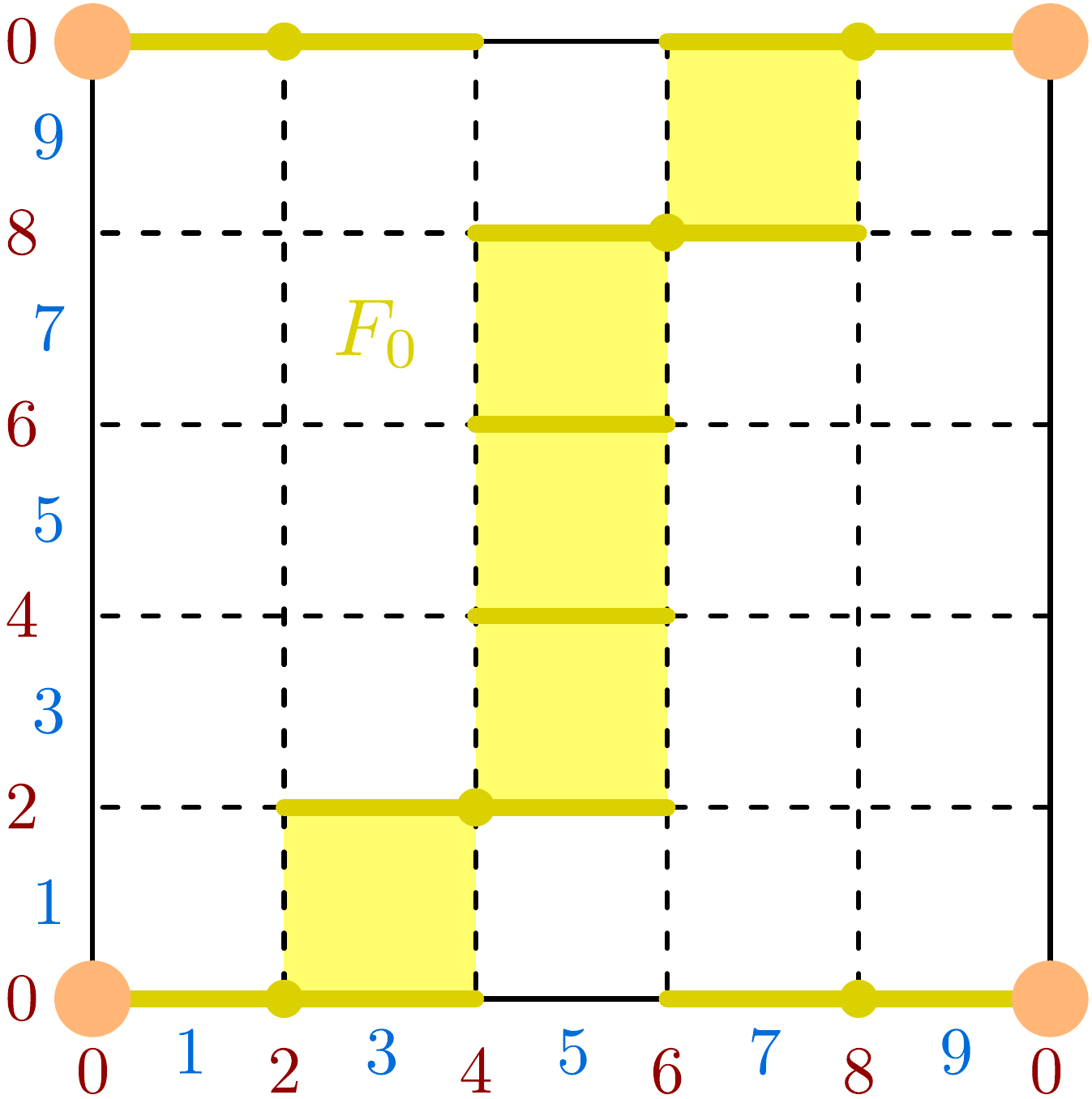}}
    \put(5.0,5.0){
      \includegraphics[height=4.5cm]{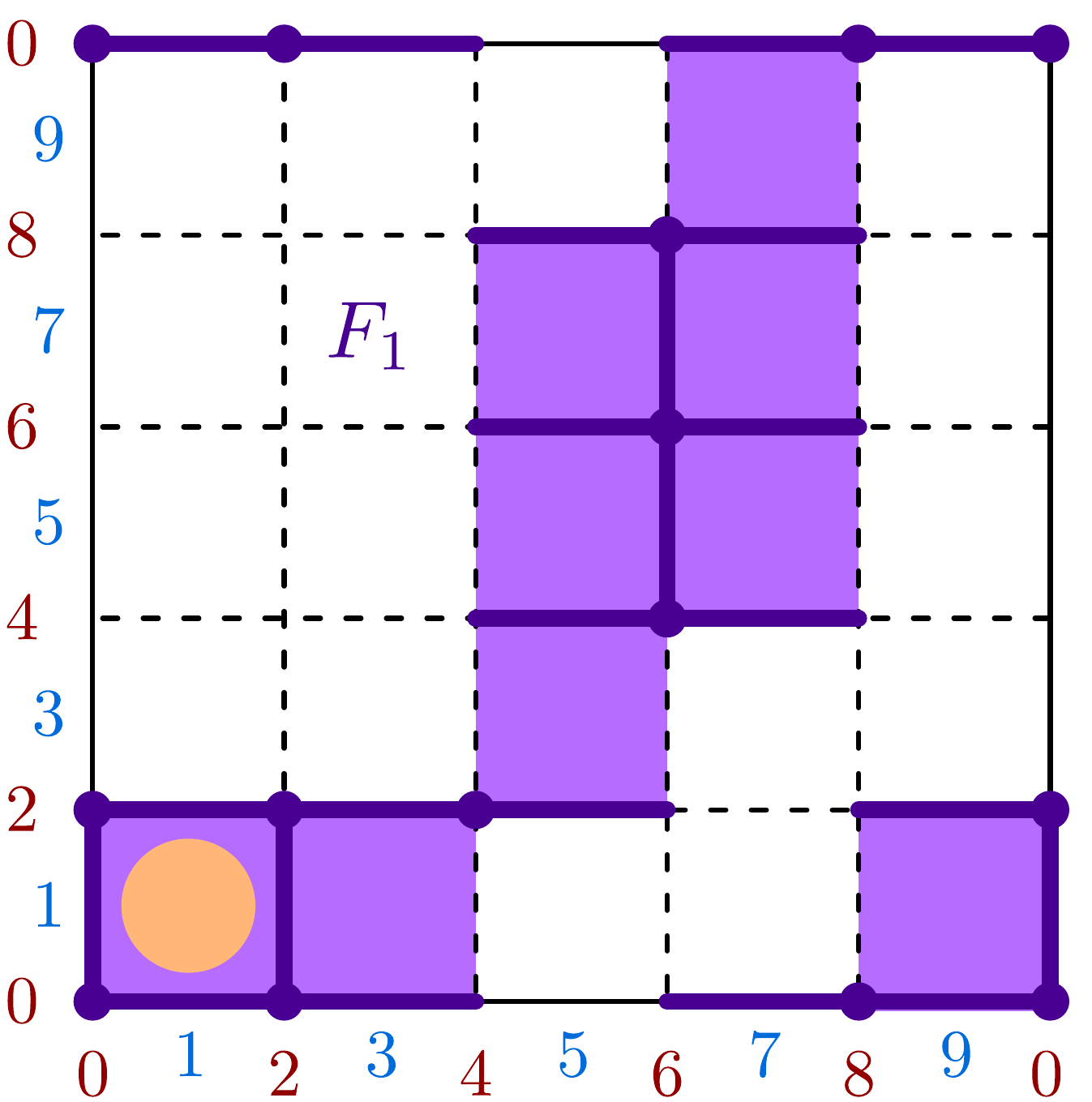}}
    \put(10.0,5.0){
      \includegraphics[height=4.5cm]{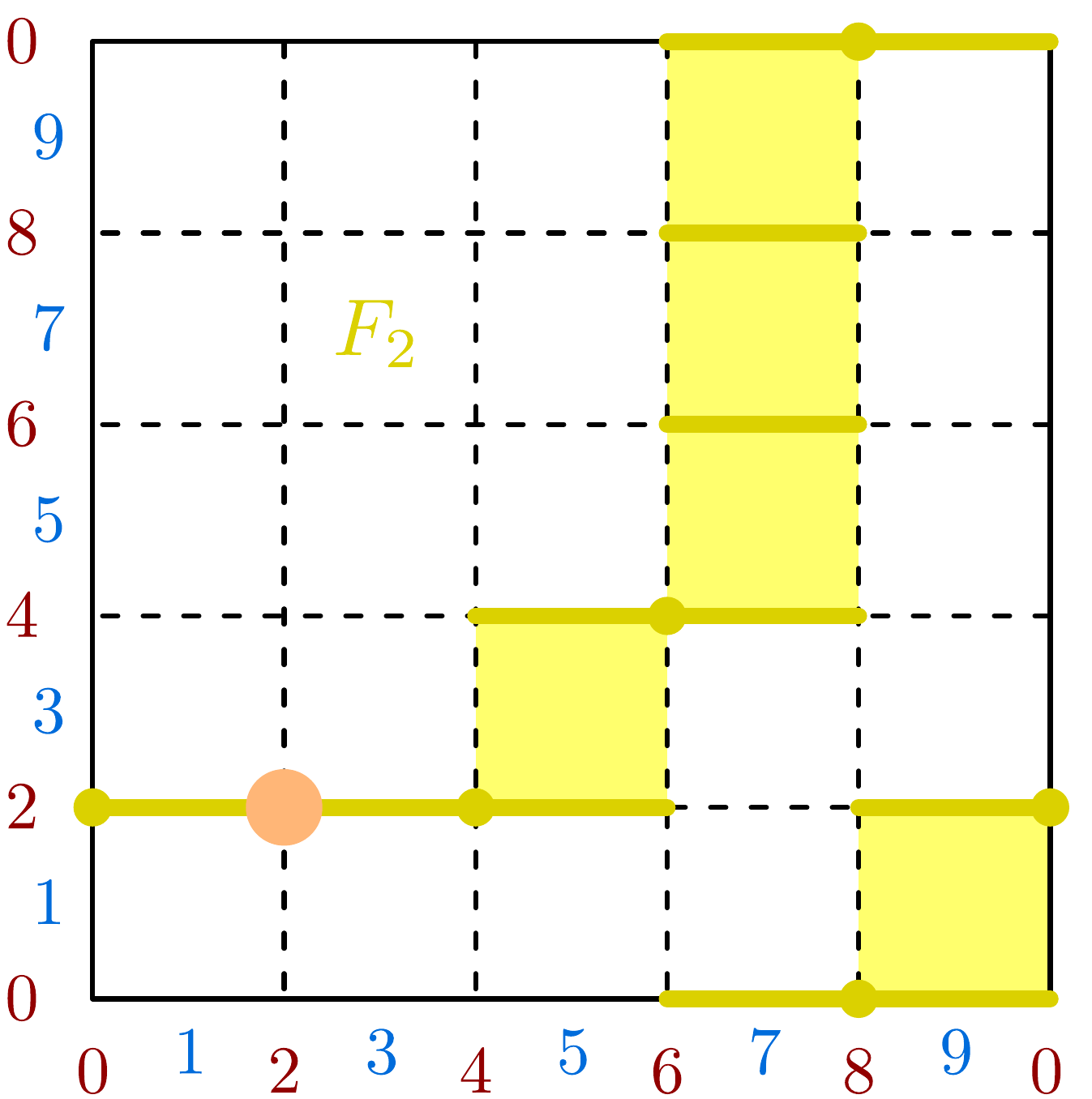}}
    \put(0.0,0.0){
      \includegraphics[height=4.5cm]{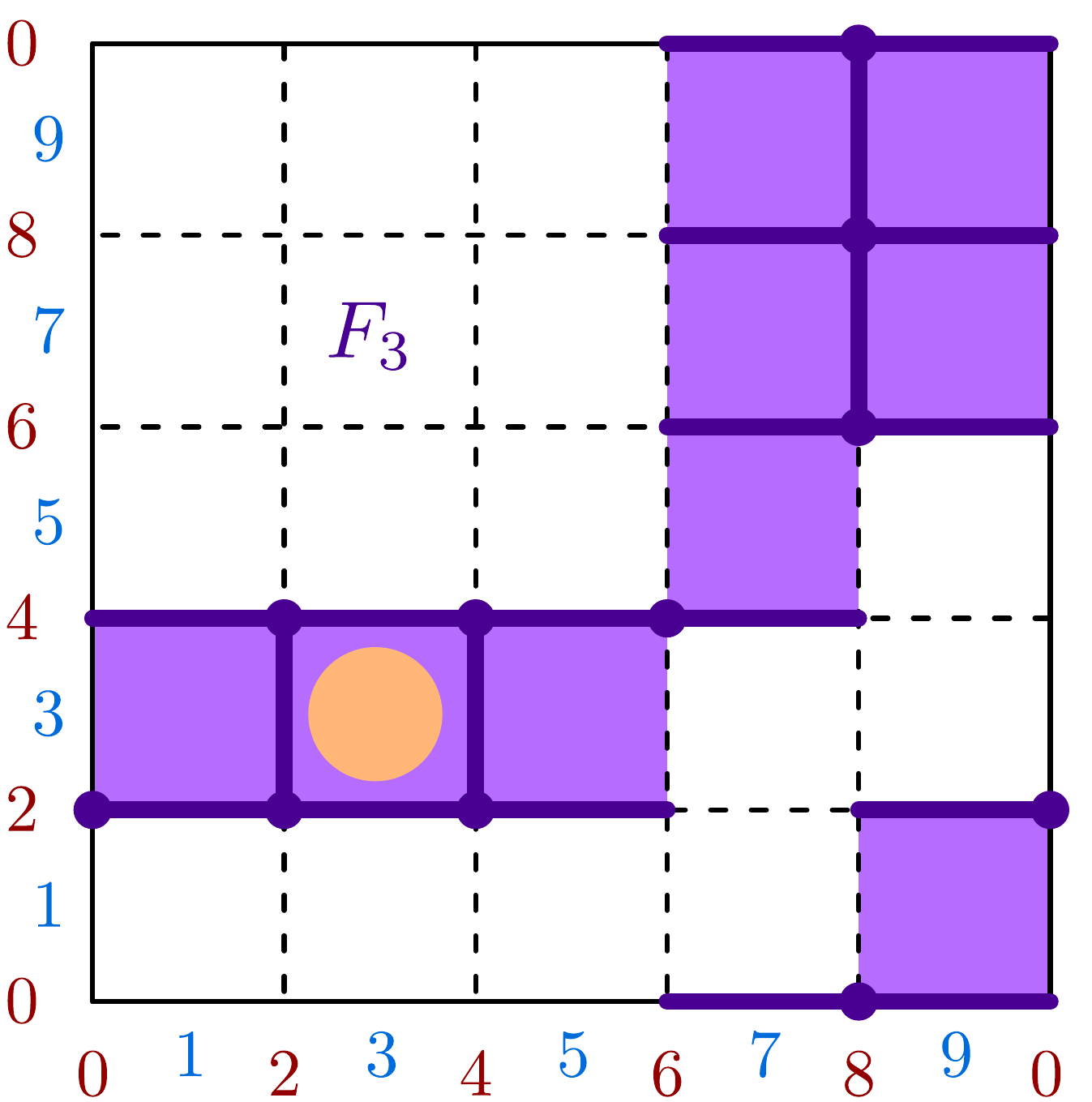}}
    \put(5.0,0.0){
      \includegraphics[height=4.5cm]{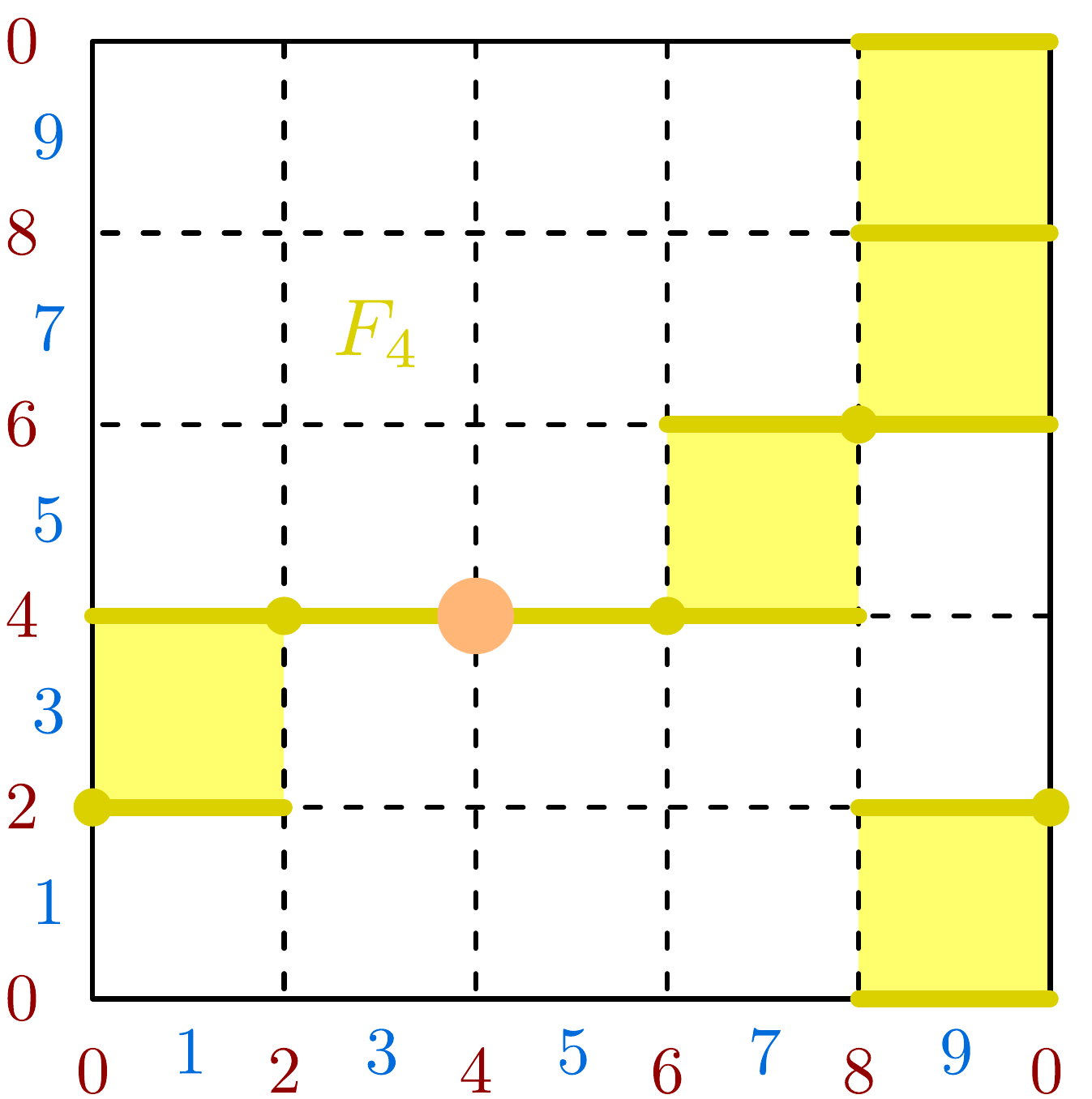}}
    \put(10.0,0.0){
      \includegraphics[height=4.5cm]{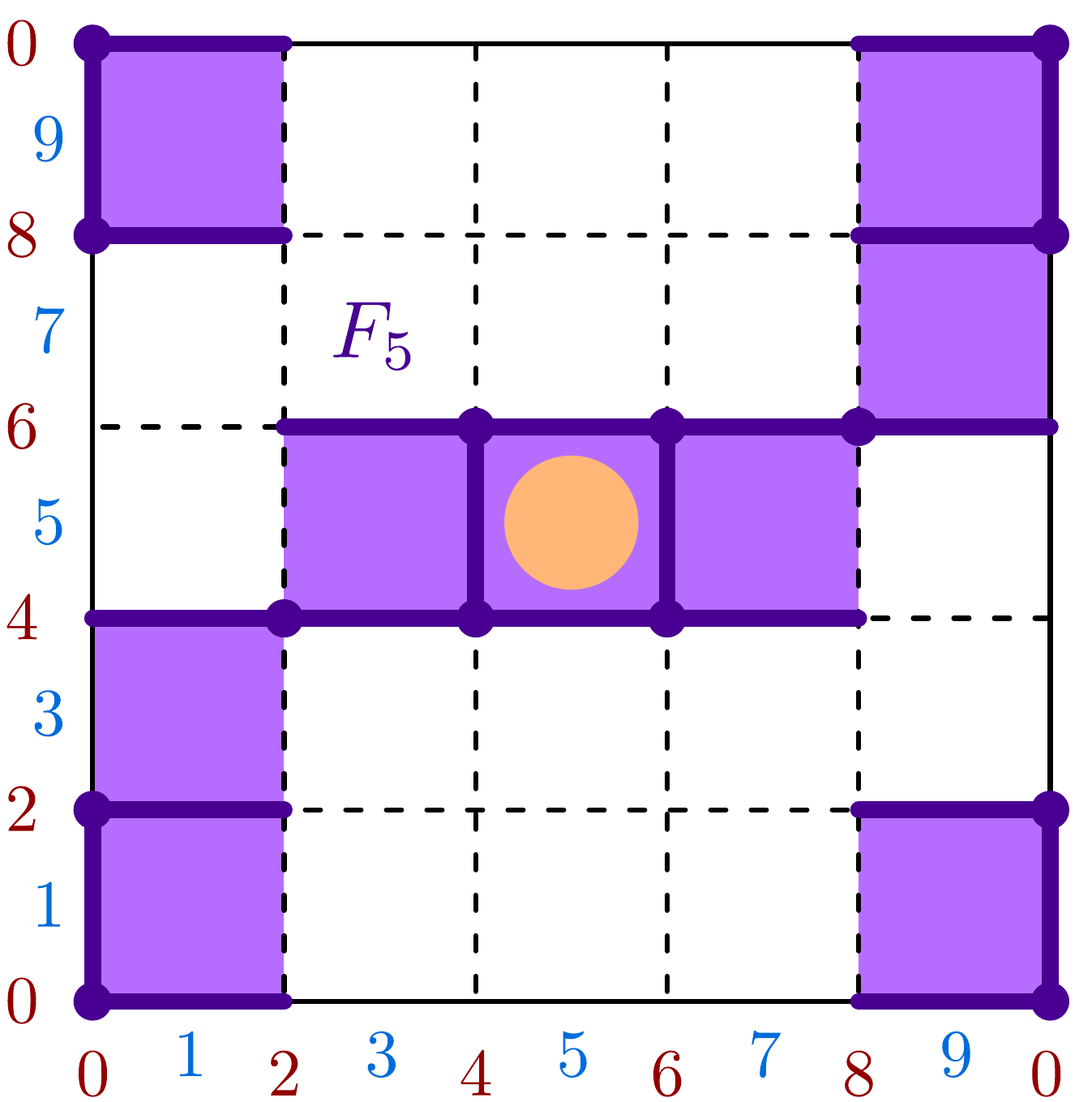}}
  \end{picture}
  \caption{A sample fence $F_0 \le F_1 \ge F_2 \le \ldots$ of
           lower semicontinuous multivalued maps $F_i : X \mto X$
           with closed and acyclic values, as defined
           in~(\ref{eqn:exfence}). The panels depict the first
           six functions of the fence. The associated isolated 
           invariant sets~$S_{F_i}$ are indicated in orange, and 
           they are related by continuation.}
  \label{fig:continuationFk}
\end{figure}%

Yet, even more is true. Recall that we use the representation $X = \ZZ_{10}$
for our underlying topological space~$X$. By using addition and subtraction
modulo~$10$ we can then define the maps $F_i : X \mto X$ via
\begin{equation} \label{eqn:exfence}
  \begin{array}{rclccl}
    \DS F_i(a) & = & \DS F(a-i)+i & \subset & X &
      \quad\mbox{ for even~$i \in \ZZ_{10}$,} \\[0.5ex]
    \DS F_i(a) & = & \DS G(a-i+1)+i-1 & \subset & X &
      \quad\mbox{ for odd~$i \in \ZZ_{10}$,}
  \end{array}
\end{equation}
for every $a\in X$. These definitions give a fence $F_0\le F_1\ge F_2\le
\ldots F_9\ge F_0$ of lower semicontinuous multivalued maps with closed
and acyclic values. By suitably adapting the argument from above, one can
show that for odd~$i$ the map~$F_i$ has the isolated invariant set~$S_{F_i} =
\{ i-1, i, i+1 \}$. Furthermore, this set is related by an elementary
continuation to both the isolated invariant set~$S_{F_{i-1}} = \{i-1\}$
for~$F_{i-1}$, as well as to the isolated invariant set~$S_{F_{i+1}} =
\{i+1\}$ for~$F_{i+1}$. This in turn shows for example that~$S_{F_0} =
\{0\}$ and $S_{F_4} = \{4\}$ are related by continuation. This is
illustrated in Figure~\ref{fig:continuationFk}, where we only depict
the first six functions of the fence, and indicate the isolated invariant
sets in orange.
}
\end{ex}
%
%
%
\section{Future work and open problems}
\label{sec:future}
In this paper, we have developed a notion of isolated invariant sets
and Conley index for multivalued maps on finite topological spaces.
Our theory requires these maps to be lower semicontinuous with closed
and acyclic values. In addition, we have established first properties
of these objects, which mimic the corresponding results in the setting
of classical dynamics. We would like to point out, however, that crucial
assumptions concerning isolation had to be completely changed, due to
poor separation in finite topological spaces. In addition, due
to space constraints, we have omitted a number of properties of the
Conley index, such as for example its additivity, and how it can be
used to detect heteroclinic orbits.

Clearly, the theory presented in this paper is only an abutment of a future
bridge joining combinatorial and classical Conley theory,
aiming at automated algorithmic analysis of dynamical systems. 
The advantage of the theory is its much broader applicability when compared to the 
flow-like combinatorial dynamics of multivector fields~\cite{lipinski:etal:23a}.
However, to get a full bridge, an understanding of formal ties between classical 
and combinatorial theory is needed in the spirit of the results for combinatorial 
vector fields in~\cite{mrozek:wanner:21a,mrozek:etal:22a}.

While the results of this paper are very general and should be useful
in a number of applied situations, we would like to close with a comment
on one unresolved issue. To explain this in more detail, recall that
classical dynamics can be broadly divided into continuous-time and
discrete-time. As we saw earlier in this paper, on finite topological
spaces the continuous-time analogue is trivial. Nevertheless, there
is a dynamical theory which mimics the behavior of flows, and it is
based on the concepts of combinatorial vector and multivector fields,
see~\cite{forman:98a, forman:98b, lipinski:etal:23a, mrozek:17a}. In
these approaches, the flow-like behavior is achieved by requiring
solutions to move between adjacent elements of the space via their
shared boundary. In contrast, the results of the present paper allow
for large jumps in the orbits via iteration of a multivalued map, i.e.,
our results mimic the discrete-time case.
\begin{figure} \centering
  \setlength{\unitlength}{1 cm}
  \begin{picture}(11.0,4.0)
    \put(0.0,0.0){
      \includegraphics[height=4cm]{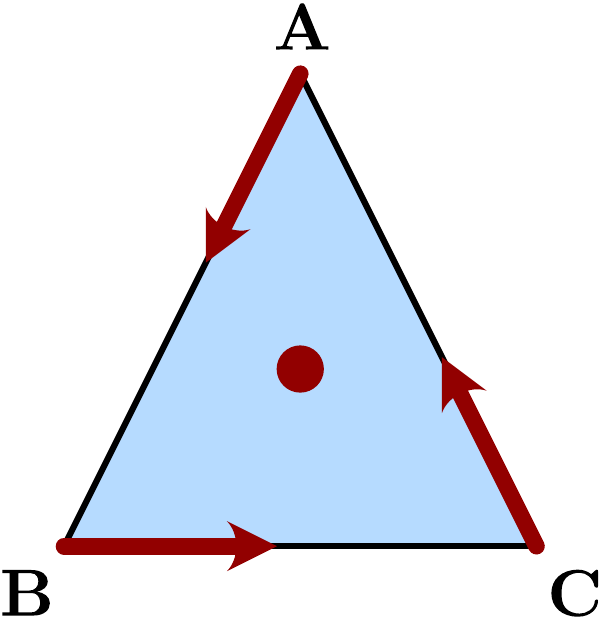}}  
    \put(7.0,0.0){
      \includegraphics[height=4cm]{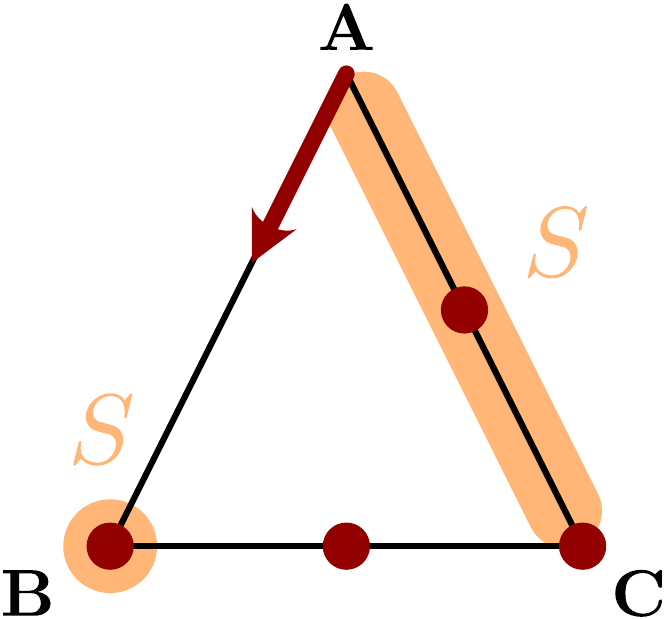}}
  \end{picture}
  \caption{Two sample combinatorial vector fields in the sense of Forman.
           While the one depicted on the left can be represented via an
           admissible multivalued map $F : X \mto X$ on the underlying
           finite topological space with the same overall dynamics, this
           is not possible for the vector field shown on the right.
           There exists no lower semicontinuous $G : Y \mto Y$ with
           closed and acyclic values for which the set~$S = \{ B, AC \}$
           is an isolated invariant set, and such that the map~$G$ has the
           same Morse graph as the indicated combinatorial vector
           field.}
  \label{fig:examplecvf}
\end{figure}%

It is natural to wonder what the relationship is between combinatorial
vector and multivector fields, and the theory of this paper. For classical
dynamics it has been shown in~\cite{mrozek:90b} that every 
isolated invariant set for a continuous-time dynamical system is also
an isolated invariant set for the discrete-time time-one-map. In this sense,
continuous-time dynamical systems can also be studied via discrete-time results.
Is the same true in the case of combinatorial vector fields? To illustrate
this, Figure~\ref{fig:examplecvf} shows two different combinatorial Forman
vector fields. The one on the left is defined on a $2$-simplex, while the
one on the right is defined on a simplicial complex representing the boundary of a triangle.
One can easily see that the dynamics of the left vector field can equivalently
be described by a multivalued map $F : X \mto X$, where~$X$ denotes the 
associated seven-point finite space. One just has to map every vertex to
its opposite edge, every edge to everything along the boundary except itself,
and the triangle to everything --- and the resulting Morse graph induced
by~$F$ is the same as the Morse graph associated with the depicted
combinatorial vector field. However, this is not possible for the example
on the right. If~$Y$ denotes the six-point finite space given by the boundary
of the triangle, then one can show that there exists no lower semicontinuous
multivalued map $G : Y \mto Y$ with closed and acyclic values for which the
set~$S = \{ B, AC \}$ (consisting of a vertex and the opposite edge) is an
isolated invariant set, and such that the Morse graph of~$G$ equals the
Morse graph of the indicated Forman vector field. This failure is due to
our last two requirements on~$G$. It is therefore an interesting open problem
as to whether our theory could be generalized to allow for a larger class of
multivalued maps.

\bibliographystyle{abbrv}
\bibliography{references}

\end{document}